\journal{}
\def\ps@pprintTitle{%
 \let\@oddhead\@empty
 \let\@evenhead\@empty
 \def\@oddfoot{}%
 \let\@evenfoot\@oddfoot}
\theoremstyle{plain}
\newtheorem{prop}{Proposition}
\newtheorem{defn}[prop]{Definition}
\newtheorem{thm}[prop]{Theorem}
\newtheorem{cor}[prop]{Corollary}
\newtheorem{lem}[prop]{Lemma}
\newtheorem{cond}[prop]{Condition}
\newtheoremstyle{remark}
  {}{}{}{}{\bfseries}{.}{.5em}{{\thmname{#1 }}{\thmnumber{#2}}{\thmnote{ (#3)}}}
\theoremstyle{remark}
\newtheorem{remark}[prop]{Remark}
\newtheorem{example}[prop]{Example}
\newcommand{\change}[1]{{\color{black}{#1}}}
\newcommand{\eps}{\varepsilon}
\newcommand{\N}{\mathbb{N}}
\newcommand{\R}{\mathbb{R}}
\newcommand{\Ac}{\mathcal{A}}
\newcommand{\Bc}{\mathcal{B}}
\newcommand{\Cc}{\mathcal{C}}
\newcommand{\Dc}{\mathcal{D}}
\newcommand{\Ec}{\mathcal{E}}
\newcommand{\Fc}{\mathcal{F}}
\newcommand{\Pc}{\mathcal{P}}
\newcommand{\Mc}{\mathcal{M}}
\newcommand{\Xc}{\mathcal{X}}
\newcommand{\Jc}{\mathcal{J}}
\newcommand{\Kc}{\mathcal{K}}
\newcommand{\Lc}{\mathcal{L}}
\newcommand{\Err}{\mathrm{Err}}
\newcommand{\dd}{\mathrm{d}}
\newcommand{\1}{\mathbf{1}}
\newcommand{\un}[1]{\underaccent{\bar}{#1}}
\newcommand{\supp}{\operatorname{supp}}
\begin{document}

\begin{frontmatter}

\title{An iterated $I$-projection procedure for solving the generalized minimum information checkerboard copula problem}

\author[A1]{Ivan Kojadinovic\corref{mycorrespondingauthor}}
\author[A2]{Tommaso Martini}

\address[A1]{CNRS / Universit\'e de Pau et des Pays de l'Adour / E2S UPPA, Laboratoire de math\'ematiques et applications -- IPRA, UMR 5142, B.P. 1155, 64013 Pau Cedex, France.}
\address[A2]{Dipartimento Interateneo di Scienze, Progetto e Politiche del Territorio, Universita degli Studi di Torino, Italy.}

\cortext[mycorrespondingauthor]{Corresponding author. Email address: \url{ivan.kojadinovic@univ-pau.fr}}

\begin{abstract}
   The minimum information copula principle initially suggested in \cite{MeeBed97} is a maximum entropy-like approach for finding the least informative copula, if it exists, that satisfies a certain number of expectation constraints specified either from domain knowledge or the available data. We first propose a generalization of this principle allowing to change the reference copula and the inclusion of additional constraints fixing certain higher-order margins of the copula. We next show that the associated optimization problem has a unique solution under a natural condition. As the latter problem is intractable in general we consider its version with all the probability measures involved in its formulation replaced by checkerboard approximations. This amounts to attempting to solve a so-called discrete $I$-projection linear problem. We then exploit the seminal results of \citet{Csi75} to derive an iterated procedure for solving the latter and provide theoretical guarantees for its convergence. The usefulness of the procedure is finally illustrated via numerical experiments in dimensions up to four with substantially finer discretizations than those encountered in the literature.
\end{abstract}

\begin{keyword}
checkerboard copulas \sep
dependence modeling \sep
Kullback--Leibler divergence minimization \sep
generalized iterative scaling \sep
marginal compatibility problem \sep
maximum entropy principle \sep
information projections.
\MSC[2020] Primary 60E05, 62B11.
Secondary 65K10.
\end{keyword}

\end{frontmatter}


\section{Introduction}

Let $d \geq 2$ and let $\bm X = (X_1, \dots, X_d)$ be a random vector of interest whose $d$-dimensional distribution function (d.f.) $F$ is assumed to be continuous. In many fields such as environmental modeling \citep{SalDeMKotRos07}, quantitative risk management \citep{McNFreEmb15} or econometric modeling \citep{Pat12}, one wishes to model $F$. Quite often, a practitioner will have some idea about how to model the univariate margins $F_1, \dots, F_d$ of $F$. According to the celebrated work of \citet{Skl59}, to complete the modeling of $F$, one then simply needs to model the unique copula $C$ -- merely the restriction to $[0,1]^d$ of a $d$-dimensional d.f.\ with standard uniform margins --  arising in the following well-known representation of $F$:
\begin{equation*}
\label{eq:F}
F(\bm x) = C\left(F_1(x_1),\dots,F_d(x_d)\right), \qquad \bm x \in \R^d.
\end{equation*}
The issue of estimating $C$ from available realizations of $\bm X$ has been extensively addressed in the literature \cite[see, e.g.,][Chapter~4 and the references therein]{HofKojMaeYan18}. This work is concerned with situations in which it is impossible or difficult to carry out a statistical modeling of $C$. Impossibility arises for instance when $X_1,\dots,X_d$ have not all been observed simultaneously, so that no realizations of $\bm X = (X_1, \dots, X_d)$ are available.

\begin{example}
  \label{ex:risk}
  We illustrate the latter situation by considering the field of quantitative risk management. Therein, $X_1,\dots,X_d$ could for instance represent $d$ types of losses of a financial institution. Since different type of losses \change{cannot necessarily be observed simultaneously}, it may happen in applications that no realizations of $\bm X$ are actually available. One would however typically have realizations from each component of $\bm X$ so that the marginal univariate distributions of $\bm X$ could be estimated. It is also possible that simultaneous realizations of, say, certain pairs or triplets of components of $\bm X$ have been observed so that the corresponding low-dimensional distributions may be estimated.  \qed
\end{example}

When $d=2$ and it is not possible to carry out a ``classical'' statistical modeling of~$C$, \citet{MeeBed97} and \citet{BedWil14} suggested to determine~$C$ via a maximum entropy-like approach \citep{Jay57} which they called the minimum information copula principle. Informally, the idea is to find the least informative bivariate copula, if it exists, that satisfies a certain number of expectation constraints specified either from domain knowledge or the available limited data. A prototypical constraint consists of fixing the value of Spearman's rho \cite{MeeBed97}.

Let us formulate a $d$-dimensional version of the minimum information copula principle. Let $\Mc(\R^d)$ denote the set of probability measures on the Borel sets $\Bc_{\R^d}$ of $\R^d$. With the convention that $0 \log 0 := 0$, for any $P, Q \in \Mc(\R^d)$, let
\begin{equation}
  \label{eq:KL}
  I(P \| Q) :=
  \begin{cases}
    \displaystyle \int_{\R^d} p_Q \log p_Q \, \dd Q, \qquad & \text{if } P \ll Q, \\
    \infty, & \text{otherwise},
  \end{cases}
\end{equation}
where $P \ll Q$ means that $P$ is absolutely continuous with respect to $Q$ and $p_Q$ is the Radon--Nikodym derivative of $P$ with respect to $Q$, i.e., $p_Q = \dd P / \dd Q$. The quantity $I(P \| Q)$ is classically known as the Kullback--Leibler divergence, the information divergence or the relative entropy of $P$ with respect to $Q$.  Note that $I(P \| Q)$ can be equal to $\infty$ even when $P \ll Q$ \citep[see, e.g.,][Section~2.1]{PolWu24}. However, when the support of $Q$ is finite, $I(P \| Q) < \infty$ if and only if $P \ll Q$.

As we continue, for any strictly positive integer~$s$, $[s] := \{1,\dots,s\}$. Furthermore, for any $P \in \Mc(\R^d)$, let $P^{(\{1\})}, \dots, P^{(\{d\})}$  be its univariate margins, that is, the probability measures in $\Mc(\R)$ defined by
$$
P^{(\{\ell\})}(B) := P \left( \left\{ \bm v \in \R^d : v_\ell \in B
  \right\} \right), \qquad B \in \Bc_{\R}, \, \ell \in [d].
$$
Now, let $\Mc([0,1]^d)$ be the subset of $\Mc(\R^d)$ consisting of probability measures whose supports are included in $[0,1]^d$, let $\Cc([0,1]^d)$ be the subset of $\Mc([0,1]^d)$ consisting of probability measures corresponding to $d$-dimensional copulas (such probability measures will also be called $d$-stochastic measures following \citet{LiMikTay98} and \citet[Section 3.1]{DurSem15}) and let $U_d \in \Cc([0,1]^d)$ be the probability measure of the uniform distribution on $[0,1]^d$. Note that the copula of $U_d$ is the so-called $d$-dimensional independence copula and that any $P \in \Cc([0,1]^d)$ satisfies $P^{(\{\ell\})} = U_d^{(\{\ell\})} = U_1$, $\ell \in [d]$, where $U_1$ is the probability measure of the univariate standard uniform distribution. Let $g_1,\dots,g_M$ be $M \geq 1$ continuous functions on $[0,1]^d$ and let $\alpha_1,\dots,\alpha_M \in \R$. With the above notation, the $d$-dimensional version of the minimum information copula problem studied in \cite{BedWil14} is:
\begin{equation}
  \label{eq:MIC}
  \begin{split}
    \min_{P \in \Cc([0,1]^d)} \,& I(P \| U_d) \text{ subject to } \\
    &\int_{[0,1]^d} g_m(\bm v) \dd P(\bm v) = \alpha_m, m \in [M].
  \end{split}
\end{equation}
Roughly speaking, Problem~\eqref{eq:MIC} aims at finding the closest copula to the independence copula (in terms of the Kullback--Leibler divergence) satisfying the $M$ expectation constraints, if it exists. Equivalently, it aims at finding the maximum entropy (that is, the ``least specific'') copula satisfying the $M$ expectation constraints, if it exists. Note in passing that problems similar to~\eqref{eq:MIC} also arise for instance in entropy-regularized optimal transport when additional linear constraints are imposed \cite[see, e.g.,][]{Zae15,EckKup21,LinSte23}. This will be discussed further in forthcoming Remark~\ref{rem:OT}.

Minimum information copula problems of the form~\eqref{eq:MIC} were investigated by several authors. To the best of our knowledge, the only $d$-dimensional studies are \cite{PiaHowBor12,BorHow19}. \citet{PiaHowBor12} considered the situation in which the $M$ expectation constraints in~\eqref{eq:MIC} correspond to fixing the $d(d-1)/2$ Spearman's correlation coefficients of $(X_i,X_j)$, $1 \leq i < j \leq d$, while \citet{BorHow19} extended the previous work to allow mixed moment constraints. \citet{BedWil14} considered arbitrary expectation constraints in a bivariate setting and investigated the form of the solution using results in \cite{Lan73,Nus89,BorLewNus94}. More recently, \citet{SukSei25a} studied an extension of Problem~\eqref{eq:MIC} for $d=2$ in which $M=1$ but the corresponding constraint cannot be interpreted as an expectation anymore, as it consists of fixing Kendall's tau of the minimum information copula. Interestingly enough, \citet{SukSei25} ended up showing that, in the bivariate case, the minimum information copula under fixed Kendall's tau is the Frank copula. Unfortunately, for arbitrary expectation constraints, Problem~\eqref{eq:MIC} is intractable in general. For that reason, \citet{PiaHowBor12}, \citet{BedWil14} and \citet{SukSei25a} all ended up solving simplified versions of problems similar to~\eqref{eq:MIC} using numerical schemes or greedy algorithms. Specifically, they more or less explicitly considered versions of their initial problems with all the probability measures involved in their formulations replaced by so-called checkerboard approximations. The latter can be regarded as applying the aforementioned maximum entropy principle to the class of so-called checkerboard copulas \citep[see, e.g.,][]{LiMikSheTay97,LiMikTay98,CotPfe14} thereby leading to the minimum information checkerboard copula problem.

A first contribution of this work is the proposal of a more general version of the minimum information copula problem in~\eqref{eq:MIC} allowing the inclusion of additional constraints fixing certain higher-order margins of the copula and the possibility of replacing $U_d$ by another application-relevant probability measure (see for instance Example~\ref{ex:costal}). A second contribution is the explicit statement of all the steps leading to its checkerboard version. As we shall see in Section~\ref{sec:MIC}, the resulting generalized minimum information checkerboard copula problem can next be reformulated as a so-called discrete $I$-projection linear problem, where the expression ``$I$-projection'' is used in the sense of the seminal work of \citet{Csi75}. The main contribution of this work is then the proposal of an iterated $I$-projection procedure for solving the latter. The $I$-projections in the proposed procedure consist either of classical marginal scalings as in the well-known iterated proportional fitting procedure \citep[see, e.g.][]{Kru37,DemSte40,Kni08,BroLeu18,mipfp} -- also known as Sinkhorn--Knopp's algorithm for instance in computational optimal transport \cite[see, e.g.,][]{PeyCut19} -- or of specific (approximations of) $I$-projections for each of the expectation constraints, if any. The latter are carried out either via generalized iterative scaling \citep[see, e.g.,][]{DarRat72,Csi89} or using a possibly new result (see Proposition~\ref{prop:gen:h}). Conditions under which the proposed procedure converges are formally established. From a practical perspective, our numerical experiments \change{with $d \in \{2, 3, 4\}$} illustrate that the proposed algorithmic approach can be used to approximately solve checkerboard versions of generalizations of problem~\eqref{eq:MIC} for substantially finer discretizations than for instance those considered in \cite{PiaHowBor12,BorHow19}. 

The outline of this work is as follows. Section~\ref{sec:prelim} formally defines higher-order margins of probability measures, probability and copula arrays, checkerboard approximations of $d$-stochastic measures and $I$-projections. In the third section, we propose a more general version of the minimum information copula problem, show that under a natural condition it has a unique solution, study its checkerboard version and verify that the latter is a particular instance of the well-studied problem which consists of attempting to $I$-project a probability measure with finite support on a so-called linear family of probability measures \citep[see, e.g.,][]{Csi75,CsiShi04}. Section~\ref{sec:solving} then consists of exploiting the seminal results in \cite{Csi75,Csi89} to derive an iterated $I$-projection procedure for solving the studied generalized minimum information checkerboard copula problem. The resulting algorithm is similar in spirit to the so-called RBI-SMART algorithm of \citet{Byr98} used in image processing \citep[see also][Section 3]{LinSte23}. The usefulness of the procedure is illustrated via numerous numerical experiments (some of which are reported in the supplement \cite{KojMar25} and connected to the so-called marginal compatibility problem \citep[see, e.g.,][and the references therein]{Rus91,Joe97,DurKleQue08,FriCha13}). We end this work by mentioning several possible extensions of the considered approach.


\section{Preliminaries}
\label{sec:prelim}

To carry out the promised derivations, we first need to introduce additional notation and definitions. These are related to higher-order margins of probability measures, probability arrays and copula arrays, checkerboard approximations of $d$-stochastic measures and $I$-projections.

\subsection{Higher-order margins of a probability measure}
\label{sec:margins}

For any $J =  \{\ell_1,\dots,\ell_{|J|}\} \subset [d]$, $1 \leq \ell_1 < \dots < \ell_{|J|} \leq d$,  let $\pi_J$ be the function from $\R^d$ to $\R^{|J|}$ defined by
\begin{equation*}
  \pi_J(\bm v) := (v_{\ell_1},\dots,v_{\ell_{|J|}}), \qquad \bm v \in \R^d.
\end{equation*}
In other words, for a vector $\bm v \in \R^d$, the so-called canonical projection $\pi_J$ removes from $\bm v$ its components $v_\ell$ such that $\ell \not \in J$. As we continue, given $\bm v \in \R^d$, we shall also write $\bm v_{J} \in \R^{|J|}$ for $\pi_J(\bm v)$ and $\bm v_{-J} \in \R^{d - |J|}$ for $\pi_{[d] \setminus J}(\bm v)$.

For any $J \subset [d]$, $J \neq \emptyset$, $\pi_J$ allows us to define the $J$-margin of a probability measure $P \in \Mc(\R^d)$ as
\begin{equation}
  \label{eq:P:J}
  P^{(J)} := P \circ \pi_J^{-1}  \in \Mc(\R^{|J|}),
\end{equation}
where
$$
P \circ \pi_J^{-1}(B) = P\big( \{\bm v \in \R^{d}: \bm v_J \in B \} \big), \qquad B \in \Bc_{\R^{|J|}},
$$
with $\Bc_{\R^{|J|}}$ the Borel sets of $\R^{|J|}$. It follows that any probability measure $P \in \Mc(\R^d)$ has $2^d - 2$ ``proper'' margins corresponding to $J \subsetneq [d]$, $J \neq \emptyset$, in~\eqref{eq:P:J}.

\subsection{Probability arrays and copula arrays}
\label{sec:arrays}

Let $n \geq 2$ be a fixed parameter and consider
\begin{equation*}
  [n]^d = \left\{ \bm i = (i_1,\dots,i_d) : i_1 \in [n], \dots, i_d \in [n] \right\}.
\end{equation*}
As we shall keep $n$ fixed in the rest of this work (and thus not attempt asymptotic investigations as $n$ tends to $\infty$ -- see Section~\ref{sec:conclusion} for future work on such aspects), we shall sometimes drop the dependence on $n$ in the forthcoming notation.

Let $\Ac_{d,n}$ be the set of all $d$-dimensional arrays (hypermatrices) whose dimension sizes are all $n$. Any array $a \in \Ac_{d,n}$ can be expressed explicitly in terms of its elements as $(a_{\bm i})_{\bm i \in [n]^d}$. Let $\Pc_{d,n}$ be the subset of $\Ac_{d,n}$ consisting of arrays whose elements are nonnegative and sum up to one. We call the elements of $\Pc_{d,n}$ probability arrays.

Let $\Mc([n]^d)$ be the subset of $\Mc(\R^d)$ consisting of probability measures whose supports are included in $[n]^d$. As we continue, elements of $\Mc([n]^d)$ will be denoted using an underlined capital letter, e.g., $\un{P},\un{Q},\un{R},\dots$ It is easy to verify that probability arrays in $\Pc_{d,n}$ are in one-to-one correspondence with probability measures in $\Mc([n]^d)$: the former can be seen as encoding the values of the probability mass functions (p.m.f.s) of the latter \citep[see, e.g.,][Section~3.1]{GeeKojMar25}. In the sequel, probability arrays in $\Pc_{d,n}$ corresponding to probability measures $\un{P},\un{Q},\un{R},\dots$ in $\Mc([n]^d)$ will always be denoted by the corresponding lowercase letters $p,q,r,\dots$, and vice versa. The support of a $p \in \Pc_{d,n}$ is defined as $\supp(p) = \left\{ \bm i \in [n]^d : p_{\bm i} > 0 \right\}$. Furthermore, for any $p \in \Pc_{d,n}$ and $J \subset [d]$, $J \neq \emptyset$, the $J$-margin $p^{(J)}$ of $p$ is defined as the probability array in $\Pc_{|J|, n}$ corresponding to the $J$-margin $\un{P}^{(J)} \in \Mc([n]^{|J|})$ of $\un{P}$. Alternatively, the array $p^{(J)}$ can be recovered by summing the elements of $p$ along the dimensions not in~$J$. Specifically, we shall express $[n]^{|J|}$ as
$$
[n]^{|J|} = \{ \bm i_J : \bm i \in [n]^d \},
$$
where the notation $\bm i_J$ is defined in Section~\ref{sec:margins} and should be understood as a vector of $|J|$ indices corresponding to the dimensions in $J$. We can then write
$$
p^{(J)}_{\bm i_J} = \sum_{\bm i_{-J} \in [n]^{d-|J|}} p_{\bm i}, \, \bm i_J \in [n]^{|J|}, \qquad \text{and} \qquad p^{(J)} = (p^{(J)}_{\bm i_J})_{\bm i_J \in [n]^{|J|}}.
$$

Finally, following \cite{GeeKojMar25}, we call copula array any $p \in \Pc_{d,n}$ that has uniform univariate margins (that is, $p^{(\{\ell\})}_i = 1/n$, for all $i \in [n]$ and $\ell \in [d]$). The set of all $d$-dimensional copula arrays with dimension sizes all equal to $n$ will be denoted by $\Cc_{d,n}$ as we continue.

\subsection{Checkerboard probability measures and copulas}
\label{sec:approx}

The aim of this section is to explain how simple absolutely continuous approximations of probability measures in $\Cc([0,1]^d)$ can be obtained via a regular partitioning of $[0,1]^d$ and copula arrays in $\Cc_{d,n}$.

Recall that $n \geq 2$ is fixed. We see it now as a discretization parameter. Let $A_1 := [0, 1/n]$, let $A_i := ((i-1)/n, i/n]$, $i \in \{2,\dots,n\}$, and let
\begin{equation}
  \label{eq:B:i}
  B_{\bm i} := A_{i_1} \times \dots \times A_{i_d}, \qquad \bm i \in [n]^d.
\end{equation}
Then $\{B_{\bm i} : \bm i \in [n]^d\}$ is a partition of $[0,1]^d$ into $n^d$ hypercubes, each of volume $n^{-d}$. Next, let $p \in \Cc_{d,n}$. By analogy with the construction initially considered in \cite{LiMikSheTay97} and using the terminology suggested in \cite{CotPfe14}, we call
\begin{equation}
  \label{eq:check:p}
  \check f_p(\bm v) :=
  \begin{cases}
    n^d \sum_{\bm i \in [n]^d} \1_{B_{\bm i}}(\bm v) p_{\bm i}, &\text{if } \bm v \in [0,1]^d, \\
    0, &\text{otherwise}.
  \end{cases}
\end{equation}
the checkerboard density with skeleton $p$. The latter is merely a piecewise constant $d$-dimensional density with value $n^d p_{\bm i}$ on each $B_{\bm i}$ in~\eqref{eq:B:i}, and zero elsewhere. The corresponding probability measure in $\Mc([0,1]^d)$ is
\begin{equation}
  \label{eq:check:P}
  \check P(B) = \int_B \check f_p(\bm v) \dd \bm v = n^d \sum_{\bm i \in [n]^d} p_{\bm i}  \int_{B \cap B_{\bm i}} \dd \bm v, \qquad B \in \Bc_{[0,1]^d}.
\end{equation}
It can be verified that $\check f_p$ in~\eqref{eq:check:p} has standard uniform margins which implies that $\check P$ in~\eqref{eq:check:P} actually belongs to $\Cc([0,1]^d)$. As we continue, the subset of $\Cc([0,1]^d)$ consisting of probability measures of the form~\eqref{eq:check:P} with $p \in \Cc_{d,n}$ will be denoted by $\check \Cc_n([0,1]^d)$ and its elements will always be denoted using an accentuated capital letter, e.g., $\check P, \check Q, \check R,\dots$

Clearly, $\check \Cc_n([0,1]^d)$ is in one-to-one correspondence with~$\Cc_{d,n}$. The latter follows from~\eqref{eq:check:P} and the fact that the skeleton $p \in \Cc_{d,n}$ in the latter expression can be recovered from $\check P$ via $p_{\bm i} = \check P(B_{\bm i})$, $\bm i \in [n]^d$, where $B_{\bm i}$ is defined in~\eqref{eq:B:i}. In the rest of this work, copula arrays in $\Cc_{d,n}$ corresponding to probability measures $\check P, \check Q, \check R,\dots$ in $\check \Cc_n([0,1]^d)$ will always be denoted by the corresponding lowercase letters $p,q,r,\dots$, and vice versa. Note that checkerboard copulas are simply the d.f.s of the probability measures in $\check \Cc_n([0,1]^d)$.



We can now define what we mean by checkerboard approximation of a $d$-stochastic measure. Let $P \in \Cc([0,1]^d)$ and notice that $p \in \Ac_{d,n}$ defined by $p_{\bm i} := P(B_{\bm i})$, $\bm i \in [n]^d$, is a copula array, that is, $p \in \Cc_{d,n}$. The checkerboard approximation of $P$ is then simply $\check P \in \check \Cc_n([0,1]^d)$ given by~\eqref{eq:check:P}. We end this section by mentioning an important property which justifies using checkerboard approximations. Let $\check C_n$ and $C$ be the d.f.s (that is, the copulas) of $\check P$ and $P$, respectively. Then, as verified for instance in \cite[proof of Theorem~4.1.5]{DurSem15},
\begin{equation}
  \label{eq:approx}
  \sup_{\bm v \in [0,1]^d} | \check C_n(\bm v) - C(\bm v) | \leq \frac{d}{n}.
\end{equation}
Letting $n$ tend to $\infty$ (only this one time), the latter inequality immediately implies that the sequence of checkerboard approximations of $C$ converges uniformly to $C$. Note that the sequence of checkerboard approximations of $C$ also converges to $C$ in a stronger sense; see, e.g., \cite[Theorem~2]{LiMikSheTay97} or \cite[Corollary~3.2]{LiMikTay98}.



\subsection{$I$-projections}
\label{sec:Iproj}

The following definition is due to \citet[Section~1]{Csi75}.

\begin{defn}[$I$-projection]
  \label{defn:Iproj}
  Let $T \in \Mc(\R^d)$, let $D \subset \Mc(\R^d)$ be a convex set of probability measures and assume that there exists $P \in D$ such that $I(P \| T) < \infty$. Then $S \in D$ satisfying $I(S \| T) = \min_{P \in D} I(P \| T)$ is called the $I$-projection of $T$ on $D$.
\end{defn}

As remarked in \cite[Section~1]{Csi75}, the existence of an $I$-projection guarantees its uniqueness. Since, as discussed in Section~\ref{sec:arrays}, probability arrays in $\Pc_{d,n}$ can be seen as encoding the values of the p.m.f.s of probability measures in $\Mc([n]^d)$, the notion of $I$-projection can be easily extended to probability arrays. First, we need to formally define the Kullback--Leibler divergence for probability arrays. For any $p,q \in \Pc_{d,n}$, it is easy to verify that $\supp(p) \subset \supp(q) \iff \un{P} \ll \un{Q}$. Starting from~\eqref{eq:KL}, it is then natural to define the Kullback--Leibler divergence of $p \in \Pc_{d,n}$ with respect to $q \in \Pc_{d,n}$ as
\begin{equation}
  \label{eq:KL:discrete}
   I(p \| q) := I(\un{P} \| \un{Q}) = \begin{cases}
    \displaystyle \sum_{\bm i \in [n]^d} p_{\bm i} \log \frac{p_{\bm i}}{q_{\bm i}}, &\text{if } \supp(p) \subset \supp(q), \\
    \infty, & \text{otherwise},
  \end{cases}
\end{equation}
with the conventions that $0 \log 0 := 0$ and $0 \log(0/0) := 0$. We then adopt the following natural definition.

\begin{defn}[$I$-projection for probability arrays]
  \label{defn:Iproj:prob:array}
  Let $t \in \Pc_{d,n}$, let $\Dc \subset \Pc_{d,n}$ be a convex set of probability arrays and assume that there exists $p \in \Dc$ such that $I(p \| t) < \infty$. Then $s \in \Dc$ satisfying $I(s \| t) = \min_{p \in \Dc} I(p \| t)$ will be called the $I$-projection of $t$ on $\Dc$.
\end{defn}

\begin{remark}
  \label{rem:extension}
  More generally, using the aforementioned one-to-one correspondence between $\Mc([n]^d)$ and $\Pc_{d,n}$, in the rest of this work, all the terminology and results holding for probability measures in $\Mc([n]^d)$ will be implicitly extended to probability arrays. \qed
\end{remark}


\section{The generalized minimum information copula problem and its checkerboard approximation}
\label{sec:MIC}

The aim of this section is to introduce and study a generalization of the minimum information copula problem as well as its checkerboard version. We first explicitly state the version of the minimum information copula problem in~\eqref{eq:MIC} studied in \cite{PiaHowBor12}. We then introduce a more general version of this problem and provide conditions under which it has a unique solution. Next, since the aforementioned problem is not tractable in general, we consider its version with all the probability measures involved in its formulation replaced by checkerboard approximations as defined in Section~\ref{sec:approx}. This is very similar to what was carried out in \cite{PiaHowBor12,BedWil14,SukSei25a}. Finally, we verify that the resulting generalized minimum information checkerboard copula problem is a particular instance of the so-called discrete $I$-projection linear problem, which will allow us to algorithmically solve it in Section~\ref{sec:solving}.

\subsection{The minimum information copula problem under fixed Spearman's rhos}

Recall the formulation of the minimum information copula problem in~\eqref{eq:MIC}. In \cite{PiaHowBor12}, $M = d(d-1)/2$ and each of the $M$ expectation constraints corresponds to the desired value of Spearman's rho for the $\{i,j\}$-margin of $P$, $\{i,j\} \subset [d]$. Let $\rho$ be the function from $\Mc([0,1]^2)$ to $[-1,1]$ defined by
\begin{equation}
  \label{eq:rho}
  \rho(P) := \int_{[0,1]^2} g_\rho(\bm v) \dd P(\bm v), \qquad P \in \Mc([0,1]^2),
\end{equation}
where
\begin{equation}
  \label{eq:g:rho}
  g_\rho(\bm v) := 12 \left(v_1 - \frac{1}{2} \right) \left(v_2 - \frac{1}{2}\right), \qquad \bm v \in [0,1]^2.
\end{equation}
Then, for any $P \in \Cc([0,1]^2)$, it can be verified that $\rho(P)$ is Spearman's rho of the copula of $P$ \cite[see, e.g.,][Section~2.6 and the references therein]{HofKojMaeYan18}. Using the previous notation, the problem addressed in \cite{PiaHowBor12} can be rewritten as
\begin{equation}
  \label{eq:MICS}
  \begin{split}
    \min_{P \in \Cc([0,1]^d)} &\, I(P \| U_d) \text{ subject to } \\
    &\rho(P^{(\{i,j\})}) = \alpha_{\{i,j\}}, \{i,j\} \subset [d],
  \end{split}
\end{equation}
for some real numbers $\alpha_{\{i,j\}} \in [-1,1]$, $\{i,j\} \subset [d]$. Following \citet{SukSei25a}, we call Problem~\eqref{eq:MICS} the minimum information copula problem under fixed Spearman's rhos. Roughly speaking, the aim is to find the closest copula to the independence copula (in terms of the Kullback--Leibler divergence) that has the specified Spearman's rhos, if it exists.

\subsection{The generalized minimum information copula problem}
\label{sec:GMIC}

Let $\Jc, \Kc \subset 2^{[d]}$ such that, for any $J \in \Jc$, $|J| \geq 2$, for any $K \in \Kc$, $|K| \geq 2$, and $\Jc \cap \Kc = \emptyset$. Furthermore, for any subset $K \in \Kc$, let $G_K$ be a function from $\Mc([0,1]^{|K|})$ to $\R$ defined by
\begin{equation}
  \label{eq:G:K}
  G_K(P) := \int_{[0,1]^{|K|}} g_K(\bm v) \dd P(\bm v), \qquad P \in \Mc([0,1]^{|K|}),
\end{equation}
for some continuous function $g_K:[0,1]^{|K|} \to \R$. Let $R \in \Cc([0,1]^d)$ and $S^J \in \Cc([0,1]^{|J|})$, $J \in \Jc$, be known probability measures. Furthermore, let $\alpha_K$, $K \in \Kc$, be $|\Kc|$ given real numbers. We call generalized minimum information copula problem the following optimization problem:
\begin{equation}
  \label{eq:GMIC}
  \begin{split}
    \min_{P \in \Cc([0,1]^d)} \,& I(P \| R) \text{ subject to } \\
    &P^{(J)} = S^J, J \in \Jc, \\
    &G_K(P^{(K)}) = \alpha_K, K \in \Kc.
  \end{split}
\end{equation}
Roughly speaking, the aim is to find the closest copula to the copula of $R$ (in terms of the Kullback--Leibler divergence) that satisfies the specified marginal constraints, if it exists. For $P \in \Cc([0,1]^{|K|})$ and $|K| = 2$, $G_K(P)$ would typically correspond to a moment of the copula of $P$ such as Spearman's rho, Gini's gamma, etc, that can be written as an expectation with respect to $P$ \citep[see, e.g.,][and the references therein]{Lie14}. A very natural choice for $R$ is $U_d$, the probability measure of the uniform distribution on $[0,1]^d$, as the problem can then be interpreted as a maximum entropy principle.

\begin{example}
  \label{ex:costal}
  Although setting $R = U_d$ is often a natural choice, some other choices may be relevant in specific situations. To illustrate this, let us consider a fictitious scenario in coastal engineering. Assume that the aim is to build a coastal protection (such as a dike) at Location A on a coast and that a similar coastal protection already exists at Location~B a few tens of kilometers away. Note that to properly dimension a coastal protection, one typically needs to know the joint distribution of certain meteorological and sea-state variables at the location of interest. Let $\bm X_A$ and $\bm X_B$ be the corresponding random vectors for Locations $A$ and $B$, respectively. Because of the relative proximity of Locations A and B, to determine the copula of $\bm X_A$ one could decide to use~\eqref{eq:GMIC} with $R$ equal to (the already determined/estimated) probability measure of the copula of $\bm X_B$. One could then interpret~\eqref{eq:GMIC} as attempting to find the closest copula to that of $X_B$ that satisfies certain ``specialized'' constraints for Location A. \qed
\end{example}

\begin{remark}
  \label{rem:OT}
  Problem~\eqref{eq:GMIC} bears some similarity with entropy-regularized optimal transport problems with additional linear constraints \cite[see, e.g.,][and the references therein]{EckKup21}, with the exception that, to the best of our knowledge, the latter do not involve higher-order, possibly overlapping, marginals constraints. Such constraints arise in the so-called marginal compatibility problem which is concerned with whether a joint distribution with the specified margins exists; see, e.g., \cite{Rus91} for the continuous case, \cite{Joe97,DurKleQue08} for the specific case of copulas and \cite{FriCha13} for the discrete case. \qed
\end{remark}

Next, let
\begin{align}
  \label{eq:F:J}
  F_J &:= \{P \in \Mc([0,1]^d) : P^{(J)} = S^J \}, \qquad J \in \Jc, \\
  \label{eq:L:K}
  L_K &:= \{ P \in \Mc([0,1]^d) : G_K(P^{(K)}) = \alpha_K \}, \qquad K \in \Kc, \\
  \label{eq:E}
  E &:= \Cc([0,1]^d) \cap \bigcap_{J \in \Jc} F_J \cap \bigcap_{K \in \Kc} L_K.
\end{align}
Problem~\eqref{eq:GMIC} can then be compactly reformulated as $\min_{P \in E} I(P \| R)$.  When attempting to solve it, we can distinguish between three mutually exclusive scenarios:
\begin{description}
\item[\bf Case 1:] The set $E$ is empty or, equivalently, the constraints in~\eqref{eq:GMIC} are inconsistent.

\item[\bf Case 2:] The constraints in~\eqref{eq:GMIC} are consistent, that is, there exists a $P \in E$, but it is not possible to choose $P$ such that $I(P \| R) < \infty$.

\item[\bf Case 3:] The constraints in~\eqref{eq:GMIC} are consistent, that is, there exists a $P \in E$, and it is possible to choose $P$ such that $I(P \| R) < \infty$.

\end{description}

In Case~1, Problem~\eqref{eq:GMIC} has no solution, obviously. In Case~2, Problem~\eqref{eq:GMIC} does not have ``interesting'' solutions as the objective function is always $\infty$. We focus on Case~3 hereafter. The following result can then be stated. It is proven in Appendix~\ref{proofs:MIC}.

\begin{prop}
  \label{prop:unicity}
  Assume that there exists $P \in E$ such that $I(P \| R) < \infty$. Then, Problem~\eqref{eq:GMIC} admits a unique solution $Q \in E$ satisfying $P \ll Q \ll R$ for all $P \in E$ such that $I(P \| R) < \infty$.
\end{prop}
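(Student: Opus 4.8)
The plan is to follow the classical strategy for $I$-projection existence and uniqueness onto a convex set, adapted from \citet{Csi75} (see also \citet{CsiShi04}). The key structural observations are: (i) the set $E$ defined in \eqref{eq:E} is convex, since $\Cc([0,1]^d)$, each $F_J$ in \eqref{eq:F:J}, and each $L_K$ in \eqref{eq:L:K} are convex (marginalization $P \mapsto P^{(J)}$ and each $G_K \circ (\cdot)^{(K)}$ are linear in $P$); and (ii) on the subset of $E$ where $I(\cdot \| R) < \infty$, the functional $P \mapsto I(P \| R)$ is strictly convex (along line segments on which it is finite) by the strict convexity of $t \mapsto t \log t$. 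Strict convexity will ultimately give uniqueness; the work is in producing a minimizer at all and then extracting the domination statement $P \ll Q \ll R$.

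\emph{First} I would establish existence of a minimizer. Let $I^\ast := \inf_{P \in E} I(P \| R) \in [0,\infty)$, finite by the Case~3 hypothesis, and take a minimizing sequence $(P_k) \subset E$ with $I(P_k \| R) \to I^\ast$. Each $P_k$ is a probability measure on the compact set $[0,1]^d$, so by Prokhorov's theorem the sequence is tight and, passing to a subsequence, $P_k \Rightarrow Q$ weakly for some $Q \in \Mc([0,1]^d)$. I would then check that $Q \in E$: the constraints $P^{(\{\ell\})} = U_1$ (membership in $\Cc([0,1]^d)$), $P^{(J)} = S^J$, and $G_K(P^{(K)}) = \alpha_K$ are all preserved under weak limits — the marginal maps are weakly continuous, and $G_K(P^{(K)}) = \int g_K \,\dd P^{(K)}$ with $g_K$ continuous on the compact set $[0,1]^{|K|}$ is weakly continuous. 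Finally, lower semicontinuity of $P \mapsto I(P \| R)$ with respect to weak convergence (a standard fact, e.g.\ via the Donsker--Varadhan variational representation $I(P\|R) = \sup_{h \in C_b} \{\int h\,\dd P - \log\int e^h\,\dd R\}$, a supremum of weakly continuous functionals) gives $I(Q \| R) \le \liminf_k I(P_k \| R) = I^\ast$. Hence $Q$ is a minimizer, and $I(Q \| R) = I^\ast < \infty$ so $Q \ll R$.

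\emph{Next}, uniqueness. Suppose $Q_1, Q_2 \in E$ both attain $I^\ast$. Then $Q := \tfrac12(Q_1 + Q_2) \in E$ by convexity, $Q \ll R$, and by strict convexity of $t \mapsto t\log t$ one gets $I(Q\|R) \le \tfrac12 I(Q_1\|R) + \tfrac12 I(Q_2\|R) = I^\ast$ with equality only if the Radon--Nikodym derivatives $\dd Q_1/\dd R$ and $\dd Q_2/\dd R$ agree $R$-a.e., i.e.\ $Q_1 = Q_2$. (One must be slightly careful writing the convexity inequality for the integrand when the densities are only in $L^1(R)$, but the pointwise convexity of $t\log t$ handles it and the equality case is exactly pointwise strict convexity.) This yields the unique solution $Q$.

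\emph{The main obstacle}, and the last thing I would prove, is the domination claim $P \ll Q \ll R$ for \emph{every} $P \in E$ with $I(P\|R) < \infty$; the second relation $Q \ll R$ is already in hand. For the first, I would use the standard convexity/variational argument: fix such a $P$ and for $\lambda \in (0,1]$ set $Q_\lambda := (1-\lambda)Q + \lambda P \in E$, which satisfies $Q_\lambda \ll R$ and $I(Q_\lambda \| R) < \infty$ (finite by convexity of $I$). Writing $\phi(\lambda) := I(Q_\lambda \| R)$, this is a convex function on $[0,1]$ with $\phi(0) = I^\ast$ its minimum, so $\phi$ is nondecreasing near $0$ and in particular the right derivative $\phi'(0^+) \ge 0$. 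Computing $\phi'(0^+)$ — one differentiates under the integral sign; the potential difficulty is justifying this when $P$ is not absolutely continuous with respect to $Q$, and the resolution is that the derivative is $+\infty$ precisely when $P$ puts mass where $Q$ does not, which is consistent with $\phi'(0^+) \ge 0$ but, combined with a second-order/finer analysis, forces the contribution of $\{\dd Q/\dd R = 0\}$ under $P$ to vanish — gives that $P(\{\dd Q/\dd R = 0\}) = 0$, i.e.\ $P \ll Q$. Concretely, if $P(A) > 0$ where $A = \{x : (\dd Q/\dd R)(x) = 0\}$, one shows $\phi(\lambda) - \phi(0) \le \lambda \log \lambda \cdot P(A) + O(\lambda)\to 0^-$ for small $\lambda$ from the term $\int_A (\lambda p_R)\log(\lambda p_R / 0\cdot\text{…})$—more carefully, the density of $Q_\lambda$ on $A$ is $\lambda\,\dd P/\dd R$ and its contribution to $I(Q_\lambda\|R)$ is $\lambda\int_A \tfrac{\dd P}{\dd R}\log\bigl(\lambda \tfrac{\dd P}{\dd R}\bigr)\dd R = \lambda(\log\lambda) P(A) + \lambda\int_A \tfrac{\dd P}{\dd R}\log\tfrac{\dd P}{\dd R}\,\dd R$, while the change on $A^c$ is $O(\lambda)$ by dominated convergence; since $\lambda\log\lambda \to 0^-$ faster is false — it is $o(\lambda)$ is false, it is $\sim \lambda \log\lambda$ which dominates $O(\lambda)$ in sign for small $\lambda$ — we get $\phi(\lambda) < \phi(0)$, contradicting minimality. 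Hence $P(A) = 0$, i.e.\ $P \ll Q$, completing the proof.
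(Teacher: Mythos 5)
Your proposal is correct, but it follows a genuinely different route from the paper. The paper verifies that $E$ is convex and closed in total variation (via two lemmas showing each $F_J$ and each $L_K$ is variation-closed, the latter using weak convergence and continuity of $g_K$), and then simply invokes Csisz\'ar's Theorem~2.1 for existence and uniqueness of the $I$-projection (with $Q \ll R$ immediate from $I(Q\|R)<\infty$) and the remark following his Theorem~2.2 for the domination $P \ll Q$. You instead give a self-contained direct argument: existence by a minimizing sequence, Prokhorov/weak compactness on the compact cube, preservation of the marginal and moment constraints under weak limits, and weak lower semicontinuity of $I(\cdot\|R)$; uniqueness by strict convexity of $t\mapsto t\log t$; and $P \ll Q$ by the mixture perturbation $Q_\lambda=(1-\lambda)Q+\lambda P$, where the contribution $\lambda\log\lambda\, P(A)$ from $A=\{\dd Q/\dd R=0\}$ dominates the $O(\lambda)$ change on $A^c$ (obtained from convexity of $t\log t$) and would contradict minimality if $P(A)>0$. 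What each approach buys: the paper's argument is shorter and rests on general results that need no compactness, whereas yours is elementary and self-contained but leans on the compactness of $[0,1]^d$ and the continuity/boundedness of the $g_K$ (so it would not transfer verbatim to non-compact settings where the variation-closedness route still works), and your last step essentially reproves the special case of Csisz\'ar's domination remark that is needed here. Two small polish points: the sentence in your final paragraph about the right derivative being ``$+\infty$'' and the ``$o(\lambda)$ is false'' aside are garbled and should be replaced by the clean displayed estimate $\phi(\lambda)-\phi(0)\le \lambda\log\lambda\, P(A)+C\lambda$ (all the auxiliary integrals being finite because $I(P\|R),I(Q\|R)<\infty$ and $t\log t\ge -1/e$); and the conclusion $P(A)=0 \Rightarrow P\ll Q$ deserves one line, since it also uses $P\ll R$ (if $Q(B)=0$ then $R(B\cap A^c)=0$, so $P(B)\le P(A)+P(B\cap A^c)=0$).
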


\begin{remark}
  The set $E$ can be easily verified to be convex. Hence, following Definition~\ref{defn:Iproj}, under the assumption of Proposition~\ref{prop:unicity}, the unique solution $Q$ of Problem~\eqref{eq:GMIC} is the $I$-projection of $R$ on $E$. \qed
\end{remark}

\begin{remark}
  Among the ingredients of Problem~\eqref{eq:GMIC}, one finds the $|\Jc|$ probability measures $S^J \in \Cc([0,1]^{|J|})$, $J \in \Jc$, defining some of the marginal constraints. When the constraints are consistent (see Case~2 or Case~3 above), there exists $P \in E$, which implies that, for any $J \in \Jc$, $P^{(J)} = S^J$. It follows that in Case~2 or Case~3 above, the probability measures $S^J \in \Cc([0,1]^{|J|})$, $J \in \Jc$, can be regarded as the higher-order margins of the same probability measure in $\Cc([0,1]^d)$. \qed
\end{remark}

\begin{remark}
  Following for instance \cite{Csi75,CsiShi04}, any intersection of the sets $L_K$ in~\eqref{eq:L:K} will be called a linear family of probability measures. From the previous remark, any intersection of the sets $F_J$ in~\eqref{eq:F:J} can be called a Fréchet class of probability measures. \qed
\end{remark}


\subsection{Checkerboard approximation of the problem}

Let $n \geq 2$ be a fixed discretization parameter as defined in Section~\ref{sec:approx}. Recall the formulation of the generalized minimum information copula problem given in~\eqref{eq:GMIC} and let $\check R \in \check \Cc_n([0,1]^d)$ and $\check S^J \in \check \Cc_n([0,1]^{|J|})$, $J \in \Jc$, be the checkerboard approximations of $R \in \Cc([0,1]^d)$ and $S^J \in \Cc([0,1]^{|J|})$, $J \in \Jc$, respectively. Since Problem~\eqref{eq:GMIC} is intractable in general, in the spirit of \cite{PiaHowBor12,BedWil14,SukSei25a}, among others, we consider its version with all the probability measures involved in its formulation replaced by checkerboard approximations. Roughly speaking, following~\eqref{eq:approx}, we would ideally want to set the discretization parameter $n$ to be ``as large as possible''. This aspect is discussed in more detail in Section~\ref{sec:num} and in the supplement \cite{KojMar25}. We call the resulting problem the generalized minimum information checkerboard copula problem:
\begin{equation}
  \label{eq:GMICC}
  \begin{split}
    \min_{\check P \in \check \Cc_n([0,1]^d)} \,& I(\check P \| \check R) \text{ subject to } \\
                                                &\check P^{(J)} = \check S^J, J \in \Jc, \\
                                                &G_K(\check P^{(K)}) = \alpha_K, K \in \Kc.
  \end{split}
\end{equation}

Recall from Section~\ref{sec:approx} that $\check \Cc_n([0,1]^d)$ is in one-to-one correspondence with the set of copula arrays $\Cc_{d,n}$. We can thus expect to be able to completely formulate Problem~\eqref{eq:GMICC} in terms of copula arrays. For any $\check P, \check Q \in \check \Cc_n([0,1]^d)$, it can be verified from~\eqref{eq:check:P} that $\check P \ll \check Q \iff \supp(p) \subset \supp(q)$ and then, from~\eqref{eq:KL}, that
\begin{equation*}
  I(\check P \| \check Q) = \begin{cases}
    \displaystyle \sum_{\bm i \in [n]^d} p_{\bm i} \log \frac{p_{\bm i}}{q_{\bm i}}, &\text{if } \supp(p) \subset \supp(q), \\
    \infty, & \text{otherwise}.
  \end{cases}
   = I(p \| q),
\end{equation*}
where $I(p \| q)$ is defined in~\eqref{eq:KL:discrete}. Furthermore, for any $\check P \in \check \Cc_n([0,1]^d)$ and $K = \{\ell_1, \dots, \ell_{|K|}\} \in \Kc$, $1 \leq \ell_1 < \dots < \ell_{|K|} \leq d$, from~\eqref{eq:G:K} and~\eqref{eq:check:p},
\begin{align*}
  G_K(\check P^{(K)}) &= \int_{[0,1]^{|K|}} g_K(\bm v) \dd \check P^{(K)}(\bm v) = \int_{[0,1]^{|K|}} g_K(\bm v) n^{|K|} \left( \sum_{\bm i_K \in [n]^{|K|}} \1_{\prod_{j=1}^{|K|} A_{i_{\ell_j}}}(\bm v) p_{\bm i_K}^{(K)} \right) \dd \bm v \\
                    &= \sum_{\bm i_K \in [n]^{|K|}} p_{\bm i_K}^{(K)} n^{|K|} \int_{[0,1]^{|K|}} g_K(\bm v) \1_{\prod_{j=1}^{|K|} A_{i_{\ell_j}}}(\bm v) \dd \bm v = \sum_{\bm i \in [n]^d} p_{\bm i}  h^K_{\bm i},
\end{align*}
where the sets $A_i$ are defined in Section~\ref{sec:approx} and $h^K$ is the array in $\Ac_{d,n}$ defined by
\begin{equation}
  \label{eq:h:K}
  h^K_{\bm i} = n^{|K|} \int_{\prod_{j=1}^{|K|} A_{i_{\ell_j}}} g_K(\bm v) \dd \bm v, \qquad \bm i \in [n]^d.
\end{equation}
Finally, using the fact that checkerboard probability measures are equal if and only if their skeletons are equal, Problem~\eqref{eq:GMICC} can be reformulated as
\begin{equation}
  \label{eq:GMICC:p:0}
  \begin{split}
    \min_{p \in \Cc_{d,n}} \,& I(p \| r) \text{ subject to } \\
                             &p^{(J)} = s^J, J \in \Jc, \\
                             &\sum_{\bm i \in [n]^d} p_{\bm i}  h^K_{\bm i} = \alpha_K, K \in \Kc,
  \end{split}
\end{equation}
where $r \in \Cc_{d,n}$ and the $s^J \in \Cc_{|J|,n}$, $J \in \Jc$, are the skeletons of $\check R \in \check \Cc_n([0,1]^d)$ and $\check S^J \in \check \Cc_n([0,1]^{|J|})$, $J \in \Jc$, respectively.


\subsection{Reformulation in terms of probability arrays}
\label{sec:sol}

With the aim of solving the generalized minimum information checkerboard copula problem, we are going to provide a straightforward reformulation of it in terms of probability arrays (and not solely copula arrays). Let
\begin{equation}
  \label{eq:Jc'}
  \Jc' := \Jc \cup \{\{1\}, \dots, \{d\}\}
\end{equation}
and let $s^{\{1\}}, \dots, s^{\{d\}} \in \Cc_{1,n}$ be equal to the univariate uniform probability array $u_1 \in \Cc_{1,n}$, that is,
\begin{equation}
  \label{eq:s:ell}
  s^{\{\ell\}} :=  u_1, \qquad \ell \in [d].
\end{equation}
Next, let
\begin{align}
  \label{eq:Fc:J}
  \Fc_J &:= \left\{p \in  \Pc_{d,n} : p^{(J)} = s^J \right\}, \qquad J \in \Jc',\\
  \label{eq:Lc:K}
  \Lc_K &:= \left\{ p \in  \Pc_{d,n} : \sum_{\bm i \in [n]^d} p_{\bm i}  h^K_{\bm i} = \alpha_K \right\}, \qquad K \in \Kc. \\
  \label{eq:Ec}
  \Ec &:= \bigcap_{J \in \Jc'} \Fc_J \cap \bigcap_{K \in \Kc} \Lc_K.
\end{align}
Then, using the fact that $\Cc_{d,n} = \bigcap_{\ell \in [d]} \Fc_{\{\ell\}}$ and the definition of $\Jc'$ in~\eqref{eq:Jc'}, it is easy to verify that Problem~\eqref{eq:GMICC:p:0} can be reformulated in terms of probability arrays as
\begin{equation}
  \label{eq:GMICC:p}
  \begin{split}
    \min_{p \in \Pc_{d,n}} \,& I(p \| r) \text{ subject to } \\
                             &p^{(J)} = s^J, J \in \Jc', \\
                             &\sum_{\bm i \in [n]^d} p_{\bm i}  h^K_{\bm i} = \alpha_K, K \in \Kc,
  \end{split}
\end{equation}
or, more compactly, as $\min_{p \in \Ec} I(p \| r)$. When attempting to solve it, we proceed exactly as in Section~\ref{sec:GMIC} and consider three mutually exclusive possibilities. If $\Ec = \emptyset$, the constraints are inconsistent and Problem~\eqref{eq:GMICC:p} has no solution. If $\Ec \neq \emptyset$ but there is no $p \in \Ec$ such that $\supp(p) \subset \supp(r)$, Problem~\eqref{eq:GMICC:p} has no ``interesting'' solutions since, following~\eqref{eq:KL:discrete}, $I(p \| r) = \infty$ for all $p \in \Ec$. We naturally focus on the analog of Case~3 in Section~\ref{sec:GMIC} which is equivalent to working under the following condition:

\begin{cond}
  \label{cond:included}
  There exists $p \in \Ec$ in~\eqref{eq:Ec} such that $\supp(p) \subset \supp(r)$.
\end{cond}

The following result is then the analog of Proposition~\ref{prop:unicity}. It is merely a consequence of the fact that $\Ec$ is convex and closed (when seen as a subset of $\R^{n^d}$), and that, as discussed in Section~\ref{sec:arrays}, probability arrays in $\Pc_{d,n}$ can be regarded as encoding the p.m.f.s of probability measures in $\Mc([n]^d)$, as well as Theorem~2.1 and the remark following Theorem~2.2 in \cite{Csi75}.

\begin{prop}
  \label{prop:unicity:p}
  Assume that Condition~\ref{cond:included} holds. Then, Problem~\eqref{eq:GMICC:p} admits a unique solution $q \in \Ec$ satisfying $\supp(p) \subset \supp(q) \subset \supp(r)$ for all $p \in \Ec$ such that $\supp(p) \subset \supp(r)$.
\end{prop}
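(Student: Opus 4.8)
The plan is to obtain Proposition~\ref{prop:unicity:p} as the finite-dimensional specialization of the $I$-projection theory of \cite{Csi75}, as announced just before its statement. By the one-to-one correspondence between $\Pc_{d,n}$ and $\Mc([n]^d)$ recalled in Remark~\ref{rem:extension}, one may argue entirely with probability arrays, so that $I(\cdot \| r)$ is the finite-support Kullback--Leibler divergence~\eqref{eq:KL:discrete} and $\Ec$ in~\eqref{eq:Ec} is regarded as a subset of the simplex $\Pc_{d,n} \subset \R^{n^d}$. The first thing to check is that $\Ec$ is convex and closed: for each $J \in \Jc'$ the set $\Fc_J$ in~\eqref{eq:Fc:J} is carved out of $\Pc_{d,n}$ by the linear equalities $\sum_{\bm i_{-J} \in [n]^{d-|J|}} p_{\bm i} = s^J_{\bm i_J}$, $\bm i_J \in [n]^{|J|}$, and for each $K \in \Kc$ the set $\Lc_K$ in~\eqref{eq:Lc:K} by the single linear equality $\sum_{\bm i \in [n]^d} p_{\bm i} h^K_{\bm i} = \alpha_K$; hence $\Ec$ is the intersection of $\Pc_{d,n}$ with finitely many affine hyperplanes. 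It is therefore convex and closed, and, lying in the simplex, also bounded and hence compact. (Incidentally, $\Ec$ is then a linear family of probability arrays in the sense of \cite{Csi75}, or an intersection of a Fr\'echet class and a linear family in the terminology of Section~\ref{sec:GMIC}; this additional structure plays no role here but matters in Section~\ref{sec:solving}.)

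Existence and uniqueness of the minimizer come next. The map $p \mapsto I(p \| r)$ is convex and lower semicontinuous on $\Pc_{d,n}$, and by Condition~\ref{cond:included} it takes a finite value somewhere on $\Ec$; since $\Ec$ is compact, the infimum over $\Ec$ is attained and is finite. For uniqueness I would use the strict convexity of $t \mapsto t \log t$: on the convex set $\{ p \in \Ec : \supp(p) \subset \supp(r) \}$, which contains every point at which the objective is finite, the map $p \mapsto I(p \| r) = \sum_{\bm i \in \supp(r)} p_{\bm i} \log(p_{\bm i}/r_{\bm i})$ is strictly convex, so its minimum cannot be attained at two distinct points. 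One may equivalently invoke Theorem~2.1 of \cite{Csi75} directly -- its hypotheses hold here because $r$ has finite support -- together with the observation made there that the existence of an $I$-projection forces its uniqueness. Write $q \in \Ec$ for this unique minimizer.

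Finally, the support inclusions. Since $I(q \| r) \le I(p \| r) < \infty$ for the $p$ provided by Condition~\ref{cond:included}, definition~\eqref{eq:KL:discrete} gives $\supp(q) \subset \supp(r)$. The remaining inclusion $\supp(p) \subset \supp(q)$, for an arbitrary $p \in \Ec$ with $\supp(p) \subset \supp(r)$, does not follow merely from $q$ minimizing $I(\cdot \| r)$ over $\Ec$, and I regard it as the only genuinely non-routine point of the argument: here I would appeal to the Pythagorean inequality $I(p \| r) \ge I(p \| q) + I(q \| r)$ satisfied by the $I$-projection $q$ of $r$ onto the convex set $\Ec$ (Theorem~2.2 of \cite{Csi75} and the remark following it). Since $I(p \| r)$ and $I(q \| r)$ are finite, so is $I(p \| q)$, i.e.\ $\supp(p) \subset \supp(q)$, again by~\eqref{eq:KL:discrete}; alternatively, the cited remark yields this inclusion directly. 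Everything else is a straightforward transcription of the finite-support case of \cite{Csi75} to the present setting, the only mild care being that all the constraints defining $\Ec$ are linear -- which is what secures its closedness and convexity -- and that the finiteness of $\supp(r)$ is what renders the closedness hypothesis of \cite{Csi75} automatic.
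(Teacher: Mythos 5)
Your proof is correct and follows essentially the same route as the paper, which justifies the proposition by noting that $\Ec$ is convex and closed in $\R^{n^d}$ and then invoking Theorem~2.1 and the remark following Theorem~2.2 of \cite{Csi75} (the Pythagorean inequality) for existence, uniqueness and the support inclusions, exactly as you do for the key step $\supp(p) \subset \supp(q)$. Your additional self-contained compactness/strict-convexity argument for existence and uniqueness is a harmless elementary substitute for Theorem~2.1, which you also note applies directly.
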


\begin{remark}
  \label{rem:check}
  Obviously, the unique solution $q$ of the generalized minimum information checkerboard copula problem under Condition~\ref{cond:included} can also naturally be expressed as $\check Q \in \check \Cc_n([0,1]^d)$, the checkerboard copula with skeleton $q$. \qed
\end{remark}

\begin{remark}
  Following Remark~\ref{rem:extension}, any intersection of the $\Fc_J$ in~\eqref{eq:Fc:J} will be called a Fréchet class of probability arrays and any intersection of the $\Lc_K$ in~\eqref{eq:Lc:K} will be called a linear family of probability arrays. As we shall see in the next subsection, Fréchet classes of probability arrays are actually particular linear families of probability arrays. Finally, following Definition~\ref{defn:Iproj:prob:array}, 
  the unique solution $q$ of Problem~\eqref{eq:GMICC:p} under Condition~\ref{cond:included} in Proposition~\ref{prop:unicity:p} is the $I$-projection of $r$ on $\Ec$. \qed
\end{remark}


\subsection{An instance of the discrete $I$-projection linear problem}
\label{sec:DIPL}

Let $t \in \Pc_{d,n}$, let $h_1,\dots,h_b$ be $b \geq 1$ arrays in $\Ac_{d,n}$, let $a_1,\dots,a_b \in \R$ and let
\begin{equation}
  \label{eq:Ec':init}
  \Ec' := \bigcap_{k \in [b]} \left\{p \in \Pc_{d,n} : \sum_{\bm i \in [n]^d} p_{\bm i} h_{k,\bm i} = a_k \right\}.
\end{equation}
The generic problem $\min_{p \in \Ec'} I(p \| t)$ (which could be easily reformulated using discrete probability measures in $\Mc([n]^d)$ -- see Section~\ref{sec:arrays}) has been extensively studied in the literature; see for instance \cite{DarRat72},  \cite[Section 3]{Csi75}, \cite{Csi89}, \cite[Chapter~5]{CsiShi04} and \cite[Chapter~15]{PolWu24}. For ease of reference, we call it the discrete $I$-projection linear problem in the rest of this work.


We shall now verify that Problem~\eqref{eq:GMICC:p} is a particular instance of the discrete $I$-projection linear problem. For any $J \in \Jc'$ and $i_J^* \in [n]^{|J|}$, let $h^{\bm i_J^*}$ be the array in $\Ac_{d,n}$ defined by $h^{\bm i_J^*}_{\bm i} := \1_{\{\bm i \in [n]^d: \bm i_J = \bm i_J^*\}}(\bm i)$, $\bm i \in [n]^d$. Then, for any $J \in \Jc'$,
\begin{align*}
    p^{(J)} = s^J &\iff p^{(J)}_{\bm i_J^*}  = s^J_{\bm i_J^*}, \qquad \forall \bm i_J^* \in [n]^{|J|}, \\
                      &\iff \sum_{\bm i \in [n]^d} p_{\bm i} \1_{\{\bm i \in [n]^d: \bm i_J = \bm i_J^*\}}(\bm i)  = s^J_{\bm i_J^*}, \qquad \forall \bm i_J^* \in [n]^{|J|}, \\
&\iff \sum_{\bm i \in [n]^d} p_{\bm i} h^{\bm i_J^*}_{\bm i}  = s^J_{\bm i_J^*}, \qquad \forall \bm i_J^* \in [n]^{|J|}.
  \end{align*}
  In other words, for any $J \in \Jc'$, the constraint $p^{(J)} = s^J$ can be reformulated as $n^{|J|}$ expectation constraints. Note that when $J = \{\ell\}$ for some $\ell \in [d]$, as a consequence of~\eqref{eq:s:ell},
$$
  p^{(\{\ell\})} =  s^{\{\ell\}} \iff \sum_{\bm i \in [n]^d} p_{\bm i} \1_{\{\bm i \in [n]^d: \bm i_\ell = \bm i_\ell^*\}}(\bm i) = \frac{1}{n}, \qquad \forall i_\ell^* \in [n].
$$
Hence, all the constraints in Problem~\eqref{eq:GMICC:p} can be rewritten as expectation constraints and Problem~\eqref{eq:GMICC:p} is indeed a particular instance of the discrete $I$-projection linear problem.


\section{Solving the generalized minimum information checkerboard copula problem via an iterated $I$-project\-ion procedure}
\label{sec:solving}

From Section~\ref{sec:sol}, we know that the generalized minimum information checkerboard copula problem in~\eqref{eq:GMICC} can be compactly written as $\min_{p \in \Ec} I(p \| r)$, where $\Ec$ is defined in~\eqref{eq:Ec} and $r \in \Cc_{d,n}$ is the skeleton of $\check R \in \check \Cc_n([0,1]^d)$. As also explained therein, we wish to solve this problem under Condition~\ref{cond:included}, which, following Proposition~\ref{prop:unicity:p} and Definition~\ref{defn:Iproj:prob:array}, amounts to finding the $I$-projection $q$ of $r$ on $\Ec$. Since, from Section~\ref{sec:DIPL}, $\min_{p \in \Ec} I(p \| r)$ is a particular instance of the discrete $I$-projection linear problem, Theorem~3.2 of \citet{Csi75} immediately implies that under Condition~\ref{cond:included} there exists a generic iterated $I$-projection procedure for approximately finding $q$. The aim of this section is to explain how this procedure can be made fully operational. Because of the underlying discrete finite setting, all the $I$-projections mentioned in this section will be described in terms of probability arrays.

\subsection{The iterated $I$-projection procedure}
\label{sec:algo}

Recall the definition of $\Jc'$ in~\eqref{eq:Jc'} and let $N := |\Jc'| + |\Kc|$. Next, let us arbitrarily rename the $\Fc_J$ in~\eqref{eq:Fc:J} as $\Ec_1,\dots,\Ec_{|\Jc'|}$ and the $\Lc_K$ in~\eqref{eq:Lc:K} as $\Ec_{|\Jc'|+1},\dots,\Ec_{N}$. Since Problem~\eqref{eq:GMICC} formulated as $\min_{p \in \Ec} I(p \| r)$ is a particular instance of the discrete $I$-projection linear problem, under Condition~\ref{cond:included}, from Theorem 3.2 of \cite{Csi75}, there exists a generic procedure based on successive $I$-projections on $\Ec_1,\dots,\Ec_N$ whose result converges to the $I$-projection $q$ of $r$ on $\Ec$. Specifically, let $q^{[0]} := r$ and, for any $m \geq 1$, let $q^{[m]}$ be the $I$-projection of $q^{[m-1]}$ on $\Ec_{\change{((m-1) \bmod N) + 1}}$. Then, Theorem 3.2 of \cite{Csi75} guarantees that $q^{[m]}$ converges to $q$ as $m \to \infty$.

The above immediately suggests the following algorithm for computing an approximation of the $I$-projection $q$ of $r$ on $\Ec$ in~\eqref{eq:Ec} under Condition~\ref{cond:included}.

\begin{algorithm}[H]
  \label{algo:iterated:Iproj}
  \caption{Iterated $I$-projection procedure for solving problem $\min_{p \in \Ec} I(p \| r)$.}  
  \SetKwInOut{Input}{Input}
  \SetKwInOut{Output}{Output}
  \Input{An input array $r \in \Pc_{d,n}$, a small, strictly positive real number $\varepsilon$ to be used in the stopping condition (see below), a maximum number of iterations $M$ and the linear sets $\Fc_J$, $J \in \Jc'$, and $\Lc_K$, $K \in \Kc$, whose intersection is equal to $\Ec$ in~\eqref{eq:Ec}.}
  \Output{The approximation $q^{[m]}$ of the $I$-projection $q$ of $r$ on $\Ec$.}
  $q^{[0]} = r$  \\
  $m = 0$ \\
  \For{$iter = 1, \dots, M$}{
    \For{$J \in \Jc'$}{
      $m = m + 1$ \\
      Compute the $I$-projection $q^{[m]}$ of $q^{[m-1]}$ on $\Fc_J$. \label{line:Iproj:Fc}
    }
    \For{$K \in \Kc$}{
      $m = m + 1$ \\
      Compute the $I$-projection $q^{[m]}$ of $q^{[m-1]}$ on $\Lc_K$. \label{line:Iproj:Gc}
    }
    \If{$\max_{\bm i \in [n]^d} |q^{[m]}_{\bm i} - q^{[m-|\Jc'|-|\Kc|]}_{\bm i}| < \varepsilon$ \label{line:num:conv}}
    {
      Exit the loop and print that numerical convergence has been reached.
    }
  }
\end{algorithm}

As one can see from Algorithm~\ref{algo:iterated:Iproj}, the idea is to successively $I$-project on the $\Fc_J$, $J \in \Jc'$, and the $\Lc_K$, $K \in \Kc$, until the current approximation of the $I$-projection of $r$ on $\Ec$ in~\eqref{eq:Ec} stabilizes, that is, changes by less than $\eps$ elementwise. 

\subsection{$I$-projections on the $\Fc_J$}

To make Algorithm~\ref{algo:iterated:Iproj} operational, we need to make its Lines~\ref{line:Iproj:Fc} and~\ref{line:Iproj:Gc} operational, that is, we need to be able to compute $I$-projections on each $\Fc_J$ in~\eqref{eq:Fc:J} and on each $\Lc_K$ in~\eqref{eq:Lc:K}.  Let us start with the former. The following corollary is a consequence of the first result mentioned in \cite[Section~5.1]{CsiShi04}. For completeness, a statement of the latter, along with its proof and the proof of the corollary, are given in Appendix~\ref{proofs:solving}.

\begin{cor}
  \label{cor:IPFP:any:marg}
  Fix $J \in \Jc'$ and let $q^\dagger \in \Pc_{d,n}$ such that $\supp (s^J) \subset \supp(q^{\dagger, (J)})$. Then, the $I$-projection $q^\star$ of $q^\dagger$ on $\Fc_J = \left\{p \in \Pc_{d,n}:  p^{(J)}  = s^J \right\}$ exists, is unique and is given by
  \begin{equation*}
    q^\star_{\bm i} :=
    \begin{cases}
      q^\dagger_{\bm i}  \frac{s^J_{\bm i_J}}{q^{\dagger,(J)}_{\bm i_J}}, &\qquad \text{if } \bm i \in \supp(q^\dagger), \\
      0, &\qquad \text{otherwise}.
    \end{cases}
  \end{equation*}
\end{cor}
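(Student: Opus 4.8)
The plan is to reduce the claim to the cited result from Section~5.1 of \cite{CsiShi04}, which characterizes the $I$-projection of a probability array $q^\dagger$ onto a Fréchet class determined by fixing a single margin. Concretely, that result says that if $\supp(s^J) \subset \supp(q^{\dagger,(J)})$, then the $I$-projection of $q^\dagger$ on $\Fc_J$ exists and is obtained by reweighting $q^\dagger$ on each ``fiber'' above an index $\bm i_J^* \in [n]^{|J|}$ by the ratio $s^J_{\bm i_J^*}/q^{\dagger,(J)}_{\bm i_J^*}$, leaving untouched any fiber on which $q^{\dagger,(J)}$ vanishes (these contribute nothing, since the target margin also vanishes there). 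So the bulk of the work is simply to match the general statement to the present notation and to verify the support hypothesis translates correctly.

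The steps, in order, are as follows. First I would recall from Section~\ref{sec:arrays} that the $J$-margin operation $p \mapsto p^{(J)}$ is summation along the coordinates not in $J$, so $\Fc_J$ is exactly the set of probability arrays with prescribed $J$-margin $s^J$, i.e.\ a Fréchet class; this is convex and closed in $\R^{n^d}$, so Definition~\ref{defn:Iproj:prob:array} applies and an $I$-projection, if it exists, is unique. Second, I would check feasibility of the projection problem: the candidate $q^\star$ defined in the statement is nonnegative (ratios of nonnegative quantities, with the convention that indices outside $\supp(q^\dagger)$ give $0$), sums to one (summing over each fiber $\bm i_J = \bm i_J^*$ gives $q^{\dagger,(J)}_{\bm i_J^*}\cdot s^J_{\bm i_J^*}/q^{\dagger,(J)}_{\bm i_J^*} = s^J_{\bm i_J^*}$ whenever $q^{\dagger,(J)}_{\bm i_J^*}>0$, and $0 = s^J_{\bm i_J^*}$ otherwise by the support hypothesis, so the total is $\sum_{\bm i_J^*} s^J_{\bm i_J^*} = 1$), and has $J$-margin exactly $s^J$ by the same fiberwise computation; moreover $\supp(q^\star) \subset \supp(q^\dagger)$, so $I(q^\star \| q^\dagger) < \infty$. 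Hence the hypothesis of Definition~\ref{defn:Iproj:prob:array} is met and the $I$-projection exists. Third, I would invoke the cited result of \cite{CsiShi04} to conclude that this particular $q^\star$ is the $I$-projection; alternatively, one can give the self-contained verification via the Pythagorean-type identity $I(p \| q^\dagger) = I(p \| q^\star) + I(q^\star \| q^\dagger)$ for every $p \in \Fc_J$, which holds because on $\supp(q^\star)$ one has $\log(q^\star_{\bm i}/q^\dagger_{\bm i}) = \log(s^J_{\bm i_J}/q^{\dagger,(J)}_{\bm i_J})$ depending on $\bm i$ only through $\bm i_J$, so that $\sum_{\bm i} p_{\bm i}\log(q^\star_{\bm i}/q^\dagger_{\bm i}) = \sum_{\bm i_J^*} p^{(J)}_{\bm i_J^*}\log(s^J_{\bm i_J^*}/q^{\dagger,(J)}_{\bm i_J^*}) = \sum_{\bm i_J^*}s^J_{\bm i_J^*}\log(s^J_{\bm i_J^*}/q^{\dagger,(J)}_{\bm i_J^*})$, the last equality using $p^{(J)} = s^J$; this quantity is independent of $p$, which is exactly the content of the Pythagorean identity and forces $q^\star$ to be the minimizer.

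The only mild subtlety — and the point I would be most careful about — is the bookkeeping of zeros: one must ensure that the convention $0\log(0/0)=0$ and the hypothesis $\supp(s^J)\subset\supp(q^{\dagger,(J)})$ are used consistently, so that fibers over $\bm i_J^*\notin\supp(q^{\dagger,(J)})$ (equivalently $\bm i_J^*\notin\supp(s^J)$) are correctly handled, and so that every $p\in\Fc_J$ with $I(p\|q^\dagger)<\infty$ indeed satisfies $\supp(p)\subset\supp(q^\dagger)=\supp(q^\star)$, which is what makes the Pythagorean identity legitimate. Beyond that, the argument is routine reindexing plus the elementary fiberwise sums, so I would present it briefly and lean on the reference from \cite{CsiShi04} for the general principle, relegating the explicit verification to Appendix~\ref{proofs:solving} as announced.
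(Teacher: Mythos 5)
Your primary route — reduce to the fiberwise scaling result cited from Section~5.1 of \cite{CsiShi04} — is exactly what the paper does: it restates that result as Proposition~\ref{prop:scaling} (for an arbitrary partition of $[n]^d$), proves it via Csisz\'ar's Theorem~2.1 plus a Jensen-type lower bound, and then specializes to the partition into fibers $\{\bm i : \bm i_J = \bm i_J^*\}$ after checking that the hypothesis $\supp(s^J)\subset\supp(q^{\dagger,(J)})$ translates into the required vanishing of the targets on fibers missing $\supp(q^\dagger)$. Your alternative, self-contained argument via the Pythagorean identity is genuinely different and arguably cleaner for this particular corollary: since $\log(q^\star_{\bm i}/q^\dagger_{\bm i})$ depends on $\bm i$ only through $\bm i_J$, the cross term $\sum_{\bm i}p_{\bm i}\log(q^\star_{\bm i}/q^\dagger_{\bm i})$ is constant over $\Fc_J$, so $I(p\|q^\dagger)=I(p\|q^\star)+I(q^\star\|q^\dagger)$ and $q^\star$ is the unique minimizer without invoking either Proposition~\ref{prop:scaling} or Csisz\'ar's Theorem~2.1; what the paper's detour buys is a reusable general partition statement, at the cost of the Jensen step. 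One slip to fix in your write-up: it is not true in general that $\supp(q^\dagger)=\supp(q^\star)$ — the scaling kills every fiber with $s^J_{\bm i_J^*}=0$ even when $q^{\dagger,(J)}_{\bm i_J^*}>0$. What you actually need (and what is true) is that every $p\in\Fc_J$ with $\supp(p)\subset\supp(q^\dagger)$ satisfies $\supp(p)\subset\supp(q^\star)$: since $p^{(J)}=s^J$, any $\bm i$ with $p_{\bm i}>0$ has $s^J_{\bm i_J}>0$ and $q^\dagger_{\bm i}>0$, hence $q^\star_{\bm i}>0$. With that one-line correction the Pythagorean split is legitimate and your argument is complete.
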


\subsection{$I$-projections on the $\Lc_K$ using generalized iterative scaling}
\label{sec:operational:Gc:GIS}

We shall now explain how to make Line~\ref{line:Iproj:Gc} of Algorithm~\ref{algo:iterated:Iproj} operational. The first approach that we consider consists of using generalized iterative scaling \citep[see, e.g.,][]{DarRat72,Csi89}. Under suitable conditions, the latter technique can actually be used to obtain an approximation of an $I$-projection on any intersection of the $\Lc_K$ in~\eqref{eq:Lc:K}. We present it for an arbitrary linear set $\Ec'$ of probability arrays as defined in~\eqref{eq:Ec':init} in terms of $b \geq 1$ arrays $h_1,\dots,h_b$ in $\Ac_{d,n}$ and $a_1,\dots,a_b \in \R$. Then, as noticed in Lemma 4 of Section~1 of \cite{DarRat72} \citep[see also][Section~5.1]{Csi89,CsiShi04}, there exists $c \geq b$ nonnegative arrays $\bar h_1,\dots,\bar h_c \in \Ac_{d,n}$ satisfying
\begin{equation}
  \label{eq:h:GIS}
  \sum_{k = 1}^c  \bar h_{k,\bm i} = 1 \qquad \text{for all } \bm i \in [n]^d,
\end{equation}
and a probability vector $(\bar a_1,\dots,\bar a_c)$ such that $\Ec'$ in~\eqref{eq:Ec':init} can be equivalently rewritten as
\begin{equation}
  \label{eq:Ec':GIS}
\Ec' = \bigcap_{k \in [c]} \left\{p \in \Pc_{d,n} : \sum_{\bm i \in [n]^d} p_{\bm i} \bar h_{k,\bm i} = \bar a_k \right\}.
\end{equation}

An algorithm to compute the arrays $\bar h_1,\dots,\bar h_c$ and $\bar a_1,\dots,\bar a_c$ from the initial formulation of $\Ec'$ in~\eqref{eq:Ec':init} is for instance given in \cite[Section 3.2]{LinSte23} \citep[see also the discussion in][before Theorem~5.2]{CsiShi04}. The following theorem is then the main result of \cite{Csi89} \citep[see also][Theorem 5.2]{CsiShi04}.

\begin{thm}
  \label{thm:GIS}
  Let $q^\dagger \in \Pc_{d,n}$ and assume that there exists $p \in \Ec'$ such that $\supp(p) \subset \supp(q^\dagger)$. Then, the $I$-projection $q^\star$ of $q^\dagger$ on $\Ec'$ exists and is unique. Furthermore, let $q^{\star,[0]} := q^\dagger$ and, for any $m \geq 1$, let
  \begin{equation}
    \label{eq:update:GIS}
    q^{\star,[m]}_{\bm i} := q^{\star,[m-1]}_{\bm i} \prod_{k = 1}^c \left( \frac{\bar a_k}{\sum_{\bm i' \in [n]^d} q^{\star,[m-1]}_{\bm i'} \bar h_{k, \bm i'}} \right)^{\bar h_{k, \bm i}}, \qquad \bm i \in [n]^d,
  \end{equation}
  with the conventions that $0/0 := 0$ and $0^0 := 1$. Then, $q^{\star,[m]}$ converges to $q^\star$ as $m \to \infty$.
\end{thm}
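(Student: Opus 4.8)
\textbf{Proof plan for Theorem~\ref{thm:GIS}.}
The plan is to recognize this as the convergence statement for the generalized iterative scaling algorithm due to \cite{DarRat72} and \cite{Csi89}, and to structure the argument around the reformulation~\eqref{eq:h:GIS}--\eqref{eq:Ec':GIS}, which is exactly what makes the multiplicative updates~\eqref{eq:update:GIS} well-defined and monotone. Existence and uniqueness of the $I$-projection $q^\star$ of $q^\dagger$ on $\Ec'$ follow immediately from the general theory: $\Ec'$ is a convex, closed subset of the simplex (a linear family as in Section~\ref{sec:DIPL}), the hypothesis guarantees some $p \in \Ec'$ with $\supp(p) \subset \supp(q^\dagger)$, hence $I(p\|q^\dagger) < \infty$, so Theorem~2.1 and the remark following Theorem~2.2 of \cite{Csi75} apply, precisely as invoked for Proposition~\ref{prop:unicity:p}. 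The substantive part is the convergence of the iterates.

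First I would check that the iteration is well-posed: starting from $q^{\star,[0]} = q^\dagger$, an induction shows each $q^{\star,[m]}$ is a probability array with $\supp(q^{\star,[m]}) = \supp(q^\dagger)$. Nonnegativity is clear since all factors in~\eqref{eq:update:GIS} are nonnegative; that the entries sum to one uses the normalization $\sum_{k=1}^c \bar h_{k,\bm i} = 1$ together with the weighted AM--GM (or Jensen) inequality applied to the convex combination $\sum_k \bar h_{k,\bm i} \log(\bar a_k / c_k^{[m-1]})$ where $c_k^{[m-1]} := \sum_{\bm i'} q^{\star,[m-1]}_{\bm i'}\bar h_{k,\bm i'}$ — this is the classical DR argument and is where the ``generalized iterative scaling'' normalization pays off. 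Next I would establish the key monotonicity/Pythagorean-type identity: for any $p \in \Ec'$,
\begin{equation*}
  I(p \| q^{\star,[m-1]}) - I(p \| q^{\star,[m]}) = \sum_{k=1}^c \bar a_k \log \frac{\bar a_k}{c_k^{[m-1]}} \;\geq\; 0,
\end{equation*}
the nonnegativity being another application of the log-sum / Gibbs inequality (the right-hand side is $I\big((\bar a_k)\|(c_k^{[m-1]})\big) \geq 0$), using crucially that $\sum_{\bm i} p_{\bm i}\bar h_{k,\bm i} = \bar a_k$ for $p \in \Ec'$. This identity shows $I(p\|q^{\star,[m]})$ is nonincreasing in $m$, hence bounded, hence convergent, and telescoping gives $\sum_m \sum_k \bar a_k \log(\bar a_k / c_k^{[m-1]}) < \infty$, so $c_k^{[m]} \to \bar a_k$ for each $k$.

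From here I would extract a convergent subsequence $q^{\star,[m_j]} \to q^\infty$ (compactness of the simplex); the limit satisfies $\sum_{\bm i} q^\infty_{\bm i}\bar h_{k,\bm i} = \bar a_k$ for all $k$, so $q^\infty \in \Ec'$, and $\supp(q^\infty) \subset \supp(q^\dagger)$ by the support-stability above. The Pythagorean identity applied with $p = q^\infty$ forces $I(q^\infty \| q^{\star,[m]}) \to 0$ along the full sequence (the left side is eventually monotone and its subsequential limit is $0$), and Pinsker's inequality then upgrades this to $q^{\star,[m]} \to q^\infty$ in total variation, i.e.\ elementwise. Finally, to identify $q^\infty = q^\star$, I would combine: (i) $I(p\|q^{\star,[m]}) \to I(p\|q^\infty)$ for every $p \in \Ec'$ with $\supp(p)\subset\supp(q^\dagger)$ — by the chain identity $I(p\|q^{\star,[m]}) = I(p\|q^{\star,[0]}) - \sum_{\ell=1}^m \sum_k \bar a_k\log(\bar a_k/c_k^{[\ell-1]})$, so the limit $L$ is independent of $p$; and (ii) taking $p = q^\star$ (legitimate since $\supp(q^\star)\subset\supp(q^\dagger)$ by Proposition~\ref{prop:unicity:p}) and $p = q^\infty$ gives $I(q^\star\|q^\infty) = L - I(q^\star\|q^\dagger)$ while also $I(q^\infty\|q^\infty) = 0 = L - I(q^\infty\|q^\dagger)$; since $q^\star$ minimizes $I(\cdot\|q^\dagger)$ over $\Ec'$ and $q^\infty \in \Ec'$, one deduces $I(q^\star\|q^\infty) \geq 0$ forces $I(q^\infty\|q^\dagger) \leq I(q^\star\|q^\dagger)$, hence equality and $q^\infty = q^\star$ by uniqueness.

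The main obstacle is the identification step: the monotonicity argument readily yields convergence of the iterates to \emph{some} point of $\Ec'$, but pinning that point down as the $I$-projection requires the ``distance to any competitor decreases by the same amount'' bookkeeping, and one must be careful about finiteness of all the divergences involved (this is exactly why the support-stability $\supp(q^{\star,[m]}) = \supp(q^\dagger)$ and the hypothesis $\supp(p)\subset\supp(q^\dagger)$ are indispensable — without them the telescoping identities involve $\infty - \infty$). Since all of this is carried out in detail in \cite{Csi89} and \cite[Theorem~5.2]{CsiShi04}, I would present the above as a sketch and refer the reader there for the routine estimates.
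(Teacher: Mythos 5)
The paper does not actually prove Theorem~\ref{thm:GIS}: it is imported verbatim as the main result of \cite{Csi89} (see also Theorem~5.2 of \cite{CsiShi04}), and since you also defer the detailed estimates to those same references, your overall plan is consistent with what the paper does. Your sketch of the Darroch--Ratcliff/Csisz\'ar argument (per-step decrement of $I(p\|\cdot)$ for every $p\in\Ec'$, telescoping, subsequential limit in $\Ec'$, identification via the ``same decrement for all competitors'' bookkeeping) is indeed the standard route taken in those sources.

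However, one concrete step in your sketch is wrong: the claim that each iterate $q^{\star,[m]}$ is a probability array, ``proved'' by the normalization \eqref{eq:h:GIS} plus weighted AM--GM. The AM--GM inequality gives exactly the opposite of what you need: $\prod_k (\bar a_k/c_k^{[m-1]})^{\bar h_{k,\bm i}} \le \sum_k \bar h_{k,\bm i}\,\bar a_k/c_k^{[m-1]}$, whence $\sum_{\bm i} q^{\star,[m]}_{\bm i} \le 1$, and the inequality is strict in general. For instance, on a two-point space with $\bar h_1=(0.9,0.1)$, $\bar h_2=(0.1,0.9)$, $\bar a=(0.5,0.5)$ and $q^\dagger=(0.9,0.1)$, one step of \eqref{eq:update:GIS} yields total mass $\approx 0.877<1$. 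So the GIS iterates are in general strictly subnormalized (this is a well-known feature of generalized iterative scaling, noted in \cite{CsiShi04}), and the correct proof must run your subsequent steps with subprobability iterates: nonnegativity of the decrement $\sum_k \bar a_k\log(\bar a_k/c_k^{[m-1]})$ then follows from the log-sum inequality using $\sum_k c_k^{[m-1]}=\sum_{\bm i}q^{\star,[m-1]}_{\bm i}\le 1$ (not $=1$), and the telescoping argument additionally forces the total mass to tend to $1$, which is what rescues the Pinsker/identification steps. Relatedly, your claimed support stability $\supp(q^{\star,[m]})=\supp(q^\dagger)$ can fail when some $\bar a_k=0$ (then every $\bm i$ with $\bar h_{k,\bm i}>0$ is zeroed out at the first step), though this shrinkage is harmless since any feasible $p$ must vanish there too. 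These repairs are exactly what \cite{Csi89} and \cite{CsiShi04} carry out, so your plan is salvageable, but as written the normalization step would fail.
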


The previous theorem immediately translates into the following algorithm:

\begin{algorithm}[H]
  \label{algo:GIS}
  \caption{Generalized iterative scaling for computing the $I$-projection of $q^\dagger$ on $\Ec'$ in~\eqref{eq:Ec':GIS}} 

  \SetKwInOut{Input}{Input}
  \SetKwInOut{Output}{Output}
  \Input{An input array $q^\dagger \in \Pc_{d,n}$, a small, strictly positive real number $\varepsilon'$ to be used in the stopping condition (see below), a maximum number of iterations $M'$ as well as $\Ec'$ given as in~\eqref{eq:Ec':GIS}.}
  \Output{The approximation $q^{\star,[m]}$ of the $I$-projection $q^\star$ of $q^\dagger$ on $\Ec'$.}
  $q^{\star,[0]} = q^\dagger$  \\
  \For{$m = 1, \dots, M'$}{
    Compute $q^{\star,[m]}$ from $q^{\star,[m-1]}$ using~\eqref{eq:update:GIS}. \\
    \If{$\max_{\bm i \in [n]^d} |q^{\star,[m]}_{\bm i} - q^{\star,[m-1]}_{\bm i}| < \varepsilon'$}
    {
      Exit the loop and print that numerical convergence has been reached.
    }
  }
\end{algorithm}

Note that Algorithm~\ref{algo:GIS} can be viewed as a particular case of the SMART algorithm described for instance in \cite{Byr93} and employed in image processing. Clearly, it can be directly used for attempting to solve any discrete $I$-projection linear problem, and thus in particular the generalized minimum information checkerboard copula problem (see Section~\ref{sec:DIPL}). In the latter case, it would suffice to express $\Ec$ in~\eqref{eq:Ec} in the required form (that is, in terms of nonnegative arrays satisfying~\eqref{eq:h:GIS}) and apply Algorithm~\ref{algo:GIS} with $q^\dagger = r$. However, as argued in \cite{LinSte23}, this is likely to be slower than the use of iterated $I$-projections as in Algorithm~\ref{algo:iterated:Iproj}, especially when many of the intermediate $I$-projections can be computed via simple scalings of the form of those given in Corollary~\ref{cor:IPFP:any:marg}.

In order to use generalized iterative scaling to make Line~\ref{line:Iproj:Gc} of Algorithm~\ref{algo:iterated:Iproj} operational, we need to specialize~\eqref{eq:update:GIS} in Theorem~\ref{thm:GIS}. The following lemma is proven in Appendix~\ref{proofs:solving}.


\begin{lem}
  \label{lem:Gc:GIS}
  If $\Ec'$ in Theorem~\ref{thm:GIS} is equal to $\Lc_K$ in~\eqref{eq:Lc:K} for some $K \in \Kc$, a suitable specialization of~\eqref{eq:update:GIS} is
  \begin{equation}
    \label{eq:update:GIS:Gc}
    q^{\star,[m]}_{\bm i} := q^{\star,[m-1]}_{\bm i} \left( \frac{\bar a_K}{\sum_{\bm i' \in [n]^d} q^{\star,[m-1]}_{\bm i'} \bar h^K_{\bm i'}} \right)^{\bar h^K_{\bm i}} \left( \frac{1 - \bar a_K}{\sum_{\bm i' \in [n]^d} q^{\star,[m-1]}_{\bm i'} (1 - \bar h^K_{\bm i'})} \right)^{1 - \bar h^K_{\bm i}}, \, \bm i \in [n]^d,
  \end{equation}
  with the conventions that $0/0 := 0$ and $0^0 := 1$, where
  $$
  \bar h^K_{\bm i} := \frac{h^K_{\bm i} - \delta_K}{\Delta_K - \delta_K}, \qquad \bm i \in [n]^d, \qquad \bar a_K := \frac{\alpha_K - \delta_K}{\Delta_K - \delta_K},
  $$
  with $h^K$ defined by~\eqref{eq:h:K},
  $$
  \delta_K := \min\left\{ \min_{\bm i \in [n]^d} h^K_{\bm i}, \alpha_K \right\} \qquad \text{and} \qquad \Delta_K := \max\left\{ \max_{\bm i \in [n]^d} h^K_{\bm i}, \alpha_K \right\}.
  $$
\end{lem}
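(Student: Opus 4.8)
The plan is to put $\Lc_K$ into the ``normal form'' \eqref{eq:Ec':GIS} required by Theorem~\ref{thm:GIS}, with exactly $c = 2$ constraint arrays, and then simply to specialise the update \eqref{eq:update:GIS} to this case. Since $\Lc_K$ in \eqref{eq:Lc:K} is cut out by the single linear constraint $\sum_{\bm i \in [n]^d} p_{\bm i} h^K_{\bm i} = \alpha_K$, it is an instance of \eqref{eq:Ec':init} with $b = 1$, $h_1 = h^K$ and $a_1 = \alpha_K$, so such a reduction is guaranteed by Lemma~4 of Section~1 of \cite{DarRat72}; the work is to write it down explicitly and to check its hypotheses.

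Assume first that $\Delta_K > \delta_K$. With $\bar h^K$ and $\bar a_K$ as in the statement, I would verify three points. (i) From the definitions of $\delta_K$ and $\Delta_K$ one has $\delta_K \le \min_{\bm i \in [n]^d} h^K_{\bm i} \le h^K_{\bm i} \le \max_{\bm i \in [n]^d} h^K_{\bm i} \le \Delta_K$ for every $\bm i \in [n]^d$, hence $\bar h^K_{\bm i} \in [0,1]$; thus $\bar h^K$ and $1 - \bar h^K$ are nonnegative arrays in $\Ac_{d,n}$ and, trivially, $\bar h^K_{\bm i} + (1 - \bar h^K_{\bm i}) = 1$ for all $\bm i$, i.e.\ \eqref{eq:h:GIS} holds with $c = 2$. (ii) Since $\delta_K \le \alpha_K \le \Delta_K$, also $\bar a_K \in [0,1]$, so $(\bar a_K, 1 - \bar a_K)$ is a probability vector. (iii) For any $p \in \Pc_{d,n}$, using $\sum_{\bm i \in [n]^d} p_{\bm i} = 1$,
\[
\sum_{\bm i \in [n]^d} p_{\bm i}\,\bar h^K_{\bm i} \;=\; \frac{1}{\Delta_K - \delta_K}\Bigl(\, \sum_{\bm i \in [n]^d} p_{\bm i}\,h^K_{\bm i} \;-\; \delta_K \Bigr),
\]
which equals $\bar a_K$ if and only if $\sum_{\bm i \in [n]^d} p_{\bm i} h^K_{\bm i} = \alpha_K$; and whenever this holds one also gets $\sum_{\bm i \in [n]^d} p_{\bm i}(1 - \bar h^K_{\bm i}) = 1 - \bar a_K$ for free. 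Hence $\Lc_K = \bigcap_{k \in [2]}\{ p \in \Pc_{d,n} : \sum_{\bm i \in [n]^d} p_{\bm i}\bar h_{k,\bm i} = \bar a_k\}$ with $\bar h_1 := \bar h^K$, $\bar h_2 := 1 - \bar h^K$, $\bar a_1 := \bar a_K$ and $\bar a_2 := 1 - \bar a_K$, which is precisely the shape \eqref{eq:Ec':GIS} with $c = 2$.

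It then only remains to substitute these two arrays and targets into \eqref{eq:update:GIS}: the product $\prod_{k=1}^{2}(\,\cdot\,)^{\bar h_{k,\bm i}}$ collapses to exactly the two factors displayed in \eqref{eq:update:GIS:Gc}, the conventions $0/0 := 0$ and $0^0 := 1$ carrying over unchanged, and Theorem~\ref{thm:GIS}---whose hypothesis on $q^\dagger$ is exactly the one assumed, now read with $\Ec' = \Lc_K$---delivers the asserted existence, uniqueness and convergence. The step I would be most careful about is the degenerate case $\Delta_K = \delta_K$: this is equivalent to $h^K \equiv \alpha_K$, in which case $\Lc_K = \Pc_{d,n}$, the $I$-projection of any $q^\dagger$ on $\Lc_K$ is $q^\dagger$ itself, and nothing needs to be iterated; note that, under the hypothesis of Theorem~\ref{thm:GIS} (which forces $\Lc_K \neq \emptyset$), a constant $h^K$ can only take the value $\alpha_K$, so this is the sole degenerate possibility. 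Beyond that, the only genuine idea in the argument is the cancellation in (iii): the affine shift by $\delta_K$ is absorbed precisely because probability arrays sum to one, which is also why $\alpha_K$ must be included in the definitions of $\delta_K$ and $\Delta_K$ so that $(\bar a_K, 1 - \bar a_K)$ remains a probability vector.
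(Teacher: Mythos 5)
Your proposal is correct and follows essentially the same route as the paper's proof: rewrite $\Lc_K$ as the intersection of the two constraints given by $\bar h^K$ and $1-\bar h^K$ with targets $\bar a_K$ and $1-\bar a_K$ (using that probability arrays sum to one, so the second constraint is redundant and the first is equivalent to membership in $\Lc_K$), and then read off \eqref{eq:update:GIS:Gc} as the specialization of \eqref{eq:update:GIS} with $c=2$. Your explicit handling of the degenerate case $\Delta_K=\delta_K$ is a small addition beyond the paper, which leaves that (harmless) case implicit.
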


When a discrete $I$-projection linear problem is solved via iterated $I$-projections as in Algorithm~\ref{algo:iterated:Iproj} with some of the intermediate $I$-projections computed via scalings whenever feasible, and otherwise via Algorithm~\ref{algo:GIS}, \citet{LinSte23} show in their Theorem~3.1 that one can set the maximum number of iterations $M'$ in Algorithm~\ref{algo:GIS} to be 1 and get a global convergent procedure. In that respect, Algorithm~\ref{algo:iterated:Iproj} with some of the intermediate $I$-projections carried out via generalized iterative scaling with $M' = 1$ is related to the RBI-SMART algorithm studied in \cite{Byr98}. \citet{LinSte23} also mention that this is likely to be computationally faster than setting $M'$ larger than 1. From preliminary numerical experiments (whose final version is reported in Section~\ref{sec:num} and in the supplement \cite{KojMar25}), we can indeed confirm that setting $M' = 1$ in this context seems a better choice in terms of execution time.

We end this subsection by formally stating that Algorithm~\ref{algo:iterated:Iproj} with its Line~\ref{line:Iproj:Gc} based on generalized iterative scaling with $M'=1$ is theoretically justified for approximately solving the generalized minimum information checkerboard copula problem formulated as in~\eqref{eq:GMICC:p} when Condition~\ref{cond:included} holds. While the statement of the following result is somewhat unwieldy for notational reasons, it is merely a consequence of Lemma~\ref{lem:Gc:GIS} and Theorem~3.1 of \cite{LinSte23} as can be seen from its proof given in Appendix~\ref{proofs:solving}. Recall that $N = |\Jc'| + |\Kc|$ and that the $\Fc_J$ in~\eqref{eq:Fc:J} and the $\Lc_K$ in~\eqref{eq:Lc:K} were arbitrarily renamed as $\Ec_1,\dots,\Ec_{|\Jc'|}$ and $\Ec_{|\Jc'|+1},\dots,\Ec_{N}$, respectively. Furthermore, for any $m \in [|\Jc'|]$, let $J_m$ be the set $J \in \Jc'$ corresponding to $\Ec_m$ and, for any $m \in \{|\Jc'|+1,\dots,N\}$, let $K_m$ be the set $K \in \Kc$ corresponding to~$\Ec_m$.

\begin{cor}
  \label{cor:algo:GIS}
  Assume that Condition~\ref{cond:included} holds. Then, the $I$-projection $q$ of $r$ on $\Ec$ in~\eqref{eq:Ec} exists and is unique. Furthermore, let $q^{[0]} := r$ and, for any $m \geq 1$ such that $\change{((m-1) \bmod N) + 1} \in [|\Jc'|]$, let $q^{[m]}$ be defined by
\begin{equation*}
  q^{[m]}_{\bm i} :=
  \begin{cases}
    q^{[m -1]}_{\bm i}  \frac{s^J_{\bm i_J}}{q^{[m-1],(J)}_{\bm i_J}}, &\qquad \text{if } \bm i \in \supp(q^{[m-1]}), \\
    0, &\qquad \text{otherwise},
  \end{cases}
\end{equation*}
where $J = J_{\change{((m-1) \bmod N) + 1}}$ and, for any $m \geq 1$ such that $\change{((m-1) \bmod N) + 1} \in \{|\Jc'|+1,\dots,N\}$, let $q^{[m]}$ be defined by
\begin{equation}
  \label{eq:qm:GIS}
  q^{[m]}_{\bm i} := q^{[m-1]}_{\bm i} \left( \frac{\bar a_K}{\sum_{\bm i' \in [n]^d} q^{[m-1]}_{\bm i'} \bar h^K_{\bm i'}} \right)^{\bar h^K_{\bm i}} \left( \frac{1 - \bar a_K}{\sum_{\bm i' \in [n]^d} q^{[m-1]}_{\bm i'} (1 - \bar h^K_{\bm i'})} \right)^{1 - \bar h^K_{\bm i}}, \qquad \bm i \in [n]^d,
\end{equation}
with the conventions that $0/0 := 0$ and $0^0 := 1$, where $K = K_{\change{((m-1) \bmod N) + 1}}$, and the array $\bar h^K$ and the real $\bar a_K$ are defined as in Lemma~\ref{lem:Gc:GIS}. Then, as $m \to \infty$, $q^{[m]}$ converges to $q$.
\end{cor}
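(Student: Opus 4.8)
The plan is to combine three ingredients already available in the excerpt: (i) Proposition~\ref{prop:unicity:p}, which under Condition~\ref{cond:included} gives existence and uniqueness of the $I$-projection $q$ of $r$ on $\Ec$; (ii) Corollary~\ref{cor:IPFP:any:marg}, which identifies the $I$-projection of any $q^\dagger$ onto a Fréchet class $\Fc_J$ with the explicit marginal scaling formula; and (iii) Lemma~\ref{lem:Gc:GIS} together with the main result of \cite{LinSte23} (their Theorem~3.1), which asserts that iterated $I$-projections remain globally convergent even when each $I$-projection onto a linear set of the form $\Lc_K$ is replaced by a single generalized iterative scaling step (i.e.\ $M' = 1$). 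The existence-and-uniqueness half of the statement is immediate from Proposition~\ref{prop:unicity:p} (and Section~\ref{sec:DIPL}, which shows $\min_{p \in \Ec} I(p \| r)$ is an instance of the discrete $I$-projection linear problem), so essentially all the work lies in verifying that the update formulae displayed in the corollary are exactly the ones to which Theorem~3.1 of \cite{LinSte23} applies.

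The key steps, in order, are as follows. First I would recall that $\Ec = \bigcap_{m \in [N]} \Ec_m$ with $\Ec_1,\dots,\Ec_{|\Jc'|}$ the Fréchet classes $\Fc_{J}$, $J \in \Jc'$, and $\Ec_{|\Jc'|+1},\dots,\Ec_N$ the linear sets $\Lc_K$, $K \in \Kc$, and that by Section~\ref{sec:DIPL} each $\Ec_m$ is itself a linear family of probability arrays; hence the hypotheses of Theorem~3.1 of \cite{LinSte23} (that one cyclically $I$-projects onto finitely many linear sets whose intersection is nonempty and meets $\supp(r)$, a nonempty intersection being guaranteed precisely by Condition~\ref{cond:included}) are met. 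Second, for the indices $m$ with $(m \bmod N) \in [|\Jc'|]$, I would invoke Corollary~\ref{cor:IPFP:any:marg} to show that the \emph{exact} $I$-projection of $q^{[m-1]}$ onto $\Fc_{J_{(m \bmod N)}}$ is the marginal scaling written in the statement — here one must check the support condition $\supp(s^{J}) \subset \supp(q^{[m-1],(J)})$ propagates along the iterations, which follows from the support monotonicity recorded in Proposition~\ref{prop:unicity:p} and the fact that a marginal scaling never shrinks a margin's support below that of $s^J$ when starting from $r$ with $\supp(p)\subset\supp(r)$ for some $p\in\Ec$. Third, for the indices $m$ with $(m \bmod N) \in \{|\Jc'|+1,\dots,N\}$, I would apply Lemma~\ref{lem:Gc:GIS} to see that one generalized iterative scaling step toward $\Lc_{K_{(m\bmod N)}}$ — i.e.\ $M' = 1$ in Algorithm~\ref{algo:GIS} specialized via~\eqref{eq:update:GIS:Gc} — is exactly~\eqref{eq:qm:GIS}, with $\bar h^K$, $\bar a_K$ defined as in the lemma; in particular the single-constraint set $\Lc_K$ gives rise to $c = 2$ normalized arrays $\bar h^K$ and $1 - \bar h^K$, so~\eqref{eq:update:GIS:Gc} and~\eqref{eq:qm:GIS} coincide term by term. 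Fourth, having matched the iteration in the corollary to the ``one-step-GIS-interleaved-with-exact-marginal-projections'' scheme of \cite{LinSte23}, I would conclude $q^{[m]} \to q$ as $m \to \infty$ directly from their Theorem~3.1.

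The main obstacle I anticipate is bookkeeping rather than deep mathematics: one must be careful that the framework of \cite{LinSte23} — which is phrased for a mixture of exact $I$-projections onto ``simple'' sets (those admitting a closed-form scaling) and single GIS steps onto the remaining linear sets — genuinely covers our decomposition, namely that the Fréchet classes $\Fc_J$ are among the ``simple'' ones (justified by Corollary~\ref{cor:IPFP:any:marg}, whose underlying result from Section~5.1 of \cite{CsiShi04} exhibits the scaling) while each $\Lc_K$ is handled by a single~\eqref{eq:update:GIS:Gc} step. A secondary delicate point is the support condition: one has to argue that Condition~\ref{cond:included} is preserved throughout the cycle so that every intermediate $I$-projection is well defined (neither a marginal scaling divides by a zero margin where $s^J$ is positive, nor a GIS step raises a positive base to a negative power), which again follows from the support-monotonicity conclusion $\supp(p) \subset \supp(q) \subset \supp(r)$ of Proposition~\ref{prop:unicity:p} propagated step by step. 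Once these verifications are in place, the convergence assertion is a direct citation and no further estimates are needed. Full details are given in Appendix~\ref{proofs:solving}.
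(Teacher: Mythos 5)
Your overall route is the same as the paper's: existence and uniqueness of $q$ via Proposition~\ref{prop:unicity:p} (equivalently Theorem~2.1 of \cite{Csi75}), identification of the update~\eqref{eq:qm:GIS} with a single generalized iterative scaling step via Lemma~\ref{lem:Gc:GIS}, and then a direct appeal to Theorem~3.1 of \cite{LinSte23} for convergence of the interleaved scheme. The one place where your argument as written does not hold up is the support bookkeeping. You justify the propagation of the condition $\supp(s^J) \subset \supp(q^{[m-1],(J)})$ by ``the support monotonicity recorded in Proposition~\ref{prop:unicity:p}'', but that proposition describes the \emph{optimizer} $q$ (namely $\supp(p) \subset \supp(q) \subset \supp(r)$), not the iterates $q^{[m]}$, so it cannot be cited for this purpose. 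If you want to argue along these lines you need a genuine induction: fix $p \in \Ec$ with $\supp(p) \subset \supp(r)$ (Condition~\ref{cond:included}) and show $\supp(p) \subset \supp(q^{[m]})$ for every $m$ — the marginal scaling preserves this because $p^{(J)} = s^J$ is positive on $\bm i_J$ for $\bm i \in \supp(p)$, and the GIS step preserves it because $p \in \Lc_K$ forces $\bar h^K$ to vanish on $\supp(p)$ whenever $\bar a_K = 0$ (and symmetrically for $\bar a_K = 1$), so no factor in~\eqref{eq:qm:GIS} kills a point of $\supp(p)$; this then yields $\supp(s^J) = \supp(p^{(J)}) \subset \supp(q^{[m],(J)})$ at every marginal step.

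The paper avoids this delicacy altogether: after establishing existence and uniqueness, it observes that $\min_{p \in \Ec} I(p\|r) = \min_{p \in \Ec \cap \Dc(r)} I(p\|r)$ with $\Dc(r) = \{p : \supp(p) \subset \supp(r)\}$, discards all coordinates outside $\supp(r)$, and vectorizes, so that the problem becomes the $I$-projection of a \emph{strictly positive} probability vector onto a nonempty intersection of affine subspaces — which is exactly the setting of Section~3 of \cite{LinSte23} — and then cites their Theorem~3.1. Your version also leaves implicit whether the hypotheses of that theorem are met in the presence of zero coordinates of $r$, which is precisely what this reduction is for. So: right strategy and correct matching of the updates, but either replace the appeal to Proposition~\ref{prop:unicity:p} by the induction sketched above, or follow the paper's restriction-to-$\supp(r)$ reduction before invoking \cite{LinSte23}.
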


\subsection{$I$-projections on the $\Lc_K$ using a possibly new result}
\label{sec:operational:Gc:new}

We shall now provide a possibly new result that can be used as an alternative to generalized iterative scaling to implement Line~\ref{line:Iproj:Gc} of Algorithm~\ref{algo:iterated:Iproj}. As shall be explained in more detail in Section~\ref{sec:num} (see also the supplement \cite{KojMar25}), in our experiments, the resulting version of Algorithm~\ref{algo:iterated:Iproj} was found to be substantially faster than its version based on generalized iterative scaling with $M' = 1$.

The following proposition, proven in Appendix~\ref{proofs:solving}, can in certain cases be used to compute an $I$-projection on a linear set $\Ec''$ defined from only one expectation constraint as is the case for the sets $\Lc_K$ in~\eqref{eq:Lc:K}.

\begin{prop}
  \label{prop:gen:h}
  Let $q^\dagger \in \Pc_{d,n}$, let $h \in \Ac_{d,n}$ and let $\Lambda$ be the continuous function from $\R$ to $\R$ defined by
  \begin{equation}\label{eq:Lambda}
  \Lambda(\lambda) := \frac{\sum_{\bm i \in [n]^d} h_{\bm i} \, q^\dagger_{\bm i} \exp(\lambda h_{\bm i})}{\sum_{\bm i \in [n]^d} q^\dagger_{\bm i}  \exp(\lambda h_{\bm i})}, \qquad \lambda \in \R.
  \end{equation}
  \begin{enumerate}
  \item[(i)] Assume that $h$ is not constant on $\supp(q^\dagger)$. Then $\Lambda$ is strictly increasing.
  \item[(ii)] Let $a \in \R$ and assume furthermore that $a \in \mathrm{ran}(\Lambda)$. Then, the $I$-projection $q^\star$ of $q^\dagger$ on the set $\Ec'' := \left\{p \in \Pc_{d,n} : \sum_{\bm i \in [n]^d} p_{\bm i} h_{\bm i} = a \right\}$ exists, is unique and is given by
  \begin{equation}
    \label{eq:Iproj:gen}
    q^\star_{\bm i} = \frac{q^\dagger_{\bm i} \exp [ \Lambda^{-1}(a) h_{\bm i} ]}{\sum_{\bm i \in [n]^d} q^\dagger_{\bm i} \exp [ \Lambda^{-1}(a) h_{\bm i}]}, \qquad \bm i \in [n]^d.
  \end{equation}
  \end{enumerate}
\end{prop}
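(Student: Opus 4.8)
The plan is to recognize that Proposition~\ref{prop:gen:h} is essentially the one-constraint instance of the classical exponential-family characterization of $I$-projections onto a linear family, and to prove it directly rather than invoking the general theory. The single constraint array $h$ plays the role of the sufficient statistic, and the candidate $q^\star$ in~\eqref{eq:Iproj:gen} is the tilted measure $q^\dagger_{\bm i} \exp(\lambda h_{\bm i})$ renormalized, with $\lambda = \Lambda^{-1}(a)$ chosen precisely so that the expectation constraint holds. So the two things to establish are: (1) that $\Lambda$ is a strictly increasing (hence invertible on its range) continuous function when $h$ is non-constant on $\Bc$, so that $\Lambda^{-1}(a)$ is well-defined for $a \in \mathrm{ran}(\Lambda)$; and (2) that the resulting $q^\star$ is indeed the unique $I$-projection of $q^\dagger$ onto $\Ec''$.

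For part (i), I would observe that $\Lambda(\lambda)$ is the mean of $h$ under the tilted probability array $q^\lambda_{\bm i} \propto q^\dagger_{\bm i}\exp(\lambda h_{\bm i})$ — note that on the complement of $\supp(q^\dagger)$ the numerator and denominator contributions vanish, and the indices in $\supp(q^\dagger)\setminus\supp(h)$ contribute $0$ to the numerator (since $h_{\bm i}=0$ there) but a positive constant-in-$\lambda$ amount to the denominator, which is why $\Bc$ appears only in the numerator sum. Differentiating, $\Lambda'(\lambda)$ equals the variance of $h$ under $q^\lambda$, which is a standard log-moment-generating-function computation:
\begin{equation*}
\Lambda'(\lambda) = \sum_{\bm i \in [n]^d} h_{\bm i}^2 \, q^\lambda_{\bm i} - \left( \sum_{\bm i \in [n]^d} h_{\bm i} \, q^\lambda_{\bm i} \right)^2 = \mathrm{Var}_{q^\lambda}(h) \geq 0,
\end{equation*}
with equality iff $h$ is $q^\lambda$-a.s.\ constant, i.e.\ constant on $\supp(q^\lambda) = \supp(q^\dagger)$, hence on $\Bc$. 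Since $h$ is assumed non-constant on $\Bc$, we get $\Lambda'(\lambda) > 0$ for every $\lambda$, so $\Lambda$ is strictly increasing; continuity is immediate from the formula. (One should be slightly careful that $\Lambda(\lambda)$ as written uses $\Bc$ in the numerator but $[n]^d$ in the denominator; rewriting the numerator sum over $\supp(q^\dagger)$ changes nothing since $h_{\bm i}=0$ off $\supp(h)$, and then the derivative identity above goes through cleanly.)

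For part (ii), given $a \in \mathrm{ran}(\Lambda)$ set $\lambda^\star := \Lambda^{-1}(a)$, which exists by (i) (or trivially if $h$ is constant on $\Bc$, in which case one checks the degenerate case separately). Then $q^\star$ in~\eqref{eq:Iproj:gen} lies in $\Ec''$ by construction, since $\sum_{\bm i} q^\star_{\bm i} h_{\bm i} = \Lambda(\lambda^\star) = a$, and $\supp(q^\star) = \supp(q^\dagger)$. To show it is the $I$-projection I would use the standard Pythagorean/variational argument: for any $p \in \Ec''$ with $\supp(p) \subset \supp(q^\dagger)$ (if no such $p$ exists the statement about $q^\star$ being an $I$-projection is vacuous in the relevant sense, but here $q^\star$ itself is such a $p$, so $\Ec''$ does contain a measure dominated by $q^\dagger$), write
\begin{equation*}
I(p \| q^\dagger) - I(p \| q^\star) = \sum_{\bm i} p_{\bm i} \log \frac{q^\star_{\bm i}}{q^\dagger_{\bm i}} = \lambda^\star \sum_{\bm i} p_{\bm i} h_{\bm i} - \log \Big( \sum_{\bm i} q^\dagger_{\bm i} \exp(\lambda^\star h_{\bm i}) \Big) = \lambda^\star a - \log Z(\lambda^\star),
\end{equation*}
where $Z(\lambda^\star)$ is the normalizing constant; crucially the right-hand side does not depend on $p$. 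Hence $I(p\|q^\dagger) = I(p\|q^\star) + c$ for a constant $c$ (which equals $I(q^\star\|q^\dagger)$ on taking $p = q^\star$), so $I(p\|q^\dagger) = I(p\|q^\star) + I(q^\star\|q^\dagger) \geq I(q^\star\|q^\dagger)$ with equality iff $I(p\|q^\star)=0$, i.e.\ $p = q^\star$. This is exactly the assertion that $q^\star$ is the unique $I$-projection of $q^\dagger$ onto $\Ec''$, and uniqueness also follows from the general remark after Definition~\ref{defn:Iproj}.

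The main obstacle I anticipate is bookkeeping around supports and the $\Bc$-versus-$[n]^d$ asymmetry in~\eqref{eq:Lambda}: one must verify that the indices in $\supp(q^\dagger)\setminus\supp(h)$ (where $h_{\bm i}=0$) contribute to the denominator but not the numerator consistently, that the variance identity for $\Lambda'$ is unaffected by this, and that the convention $0\log(0/0)=0$ handles indices outside $\supp(q^\dagger)$ in the Pythagorean identity. None of this is deep, but it is the place where a sloppy argument would go wrong. A secondary point worth a sentence is the edge case where $h$ is constant on $\Bc$ — then $\mathrm{ran}(\Lambda)$ is a single point (or $\Lambda$ is constant), $a$ must equal that value, and $q^\star = q^\dagger$ restricted appropriately already satisfies the constraint; the Pythagorean argument still applies with $\lambda^\star = 0$.
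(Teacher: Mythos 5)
Your proof is correct, and for part (ii) it takes a genuinely different route from the paper. Part (i) is essentially the paper's argument in different clothing: the paper computes $\Lambda' = (f'g-f^2)/g^2$ and gets strict positivity from Cauchy--Schwarz applied to $z_{\bm i}=(q^\dagger_{\bm i}\exp(\lambda h_{\bm i}))^{1/2}$, which is exactly your ``$\Lambda'(\lambda)=\mathrm{Var}_{q^\lambda}(h)>0$'' statement. For part (ii), however, the paper does not argue directly: it first shows $q^\star\in\Ec''$ with $\supp(q^\star)=\supp(q^\dagger)$, then invokes Csisz\'ar's Theorem~2.1 for existence/uniqueness of the $I$-projection, the remark after his Theorem~2.2 for support preservation, and his Theorem~3.1 to conclude that the projection must have the exponential form $q^\dagger_{\bm i}\kappa\exp(\lambda h_{\bm i})$, after which the constraints force $\lambda=\Lambda^{-1}(a)$ and $\kappa$ to be the normalizer. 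You instead verify feasibility of the explicit candidate and then prove optimality and uniqueness by the exact Pythagorean identity $I(p\|q^\dagger)=I(p\|q^\star)+I(q^\star\|q^\dagger)$ for all $p\in\Ec''$ with $\supp(p)\subset\supp(q^\dagger)$ (with $I(p\|q^\dagger)=\infty$ otherwise), noting that existence in the sense of Definition~\ref{defn:Iproj:prob:array} holds because $q^\star$ itself witnesses finiteness. This is self-contained and elementary, whereas the paper's route is shorter given that Csisz\'ar's results are already used throughout; your version has the added virtue of not needing the characterization Theorem~3.1 at all. One small caveat: your closing aside about the case where $h$ is constant on $\Bc$ is not quite right --- if $h$ vanishes on part of $\supp(q^\dagger)$ but is a nonzero constant on $\Bc$, then $\Lambda$ is still strictly increasing rather than constant --- but this case is excluded by the hypotheses of the proposition (part (ii) inherits the non-constancy assumption via ``furthermore''), so it does not affect the validity of your proof.
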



Note that, to carry out the $I$-projection defined via~\eqref{eq:Iproj:gen}, one needs to be able to compute $\Lambda^{-1}$. This can be done in practice by numerical root finding.

The following short result, proven in Appendix~\ref{proofs:solving}, shows that the assumption that $a \in \mathrm{ran}(\Lambda)$ in Proposition~\ref{prop:gen:h} is implied by a simpler condition.

\begin{lem}
  \label{lem:alpha:ran:Lambda}
Let $q^\dagger \in \Pc_{d,n}$, $h \in \Ac_{d,n}$,  $a \in \R$ and assume that there exists $p \in \Ec'' := \left\{p \in \Pc_{d,n} : \sum_{\bm i \in [n]^d} p_{\bm i} h_{\bm i} = a \right\}$ such that $\supp(p) = \supp(q^\dagger)$. Then $a \in \mathrm{ran}(\Lambda)$, where $\Lambda$ is defined in~\eqref{eq:Lambda}.
\end{lem}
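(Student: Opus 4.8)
The plan is to show $a\in\mathrm{ran}(\Lambda)$ by a continuity and monotonicity argument, using the probability array $p\in\Ec''$ with $\supp(p)=\supp(q^\dagger)$ as a comparison point. First I would dispose of the degenerate case: if $h$ is constant on $\Bc=\supp(q^\dagger)\cap\supp(h)$ — equivalently constant on $\supp(q^\dagger)$ up to the indices where $h$ vanishes — then $\Lambda$ is constant, and I would check directly that its constant value must equal $a$. Indeed, since $\supp(p)=\supp(q^\dagger)$ and $\sum_{\bm i}p_{\bm i}h_{\bm i}=a$, summing only over $\supp(q^\dagger)$ (the other terms contribute zero) forces $a$ to be that common value of $h$ weighted by $p$, which collapses to the constant; hence $a\in\mathrm{ran}(\Lambda)=\{\text{that constant}\}$.

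For the main case, $h$ non-constant on $\Bc$, I would invoke part (i) of Proposition~\ref{prop:gen:h}: $\Lambda$ is continuous and strictly increasing on $\R$, so $\mathrm{ran}(\Lambda)$ is an open interval, and it suffices to show that $a$ lies strictly between $\lim_{\lambda\to-\infty}\Lambda(\lambda)$ and $\lim_{\lambda\to+\infty}\Lambda(\lambda)$. The first of these limits equals $\min\{h_{\bm i}:\bm i\in\Bc\}$ and the second equals $\max\{h_{\bm i}:\bm i\in\Bc\}$ — this is the standard tilting/large-deviations computation: as $\lambda\to+\infty$ the exponential weights concentrate on the indices of $\Bc$ maximizing $h$, and symmetrically for $\lambda\to-\infty$. (One has to be slightly careful about indices in $\supp(q^\dagger)\setminus\Bc$, i.e. where $h_{\bm i}=0$; but their contribution to both numerator and denominator of $\Lambda$ is negligible in the relevant limit, or can be absorbed since $0$ lies between the min and max over $\Bc$ unless $h$ is one-signed, a case handled the same way.) So it remains to show $m_-<a<m_+$ where $m_\pm$ are the min and max of $h$ over $\Bc$.

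This last inequality is where the hypothesis $\supp(p)=\supp(q^\dagger)$ does the work. Writing $a=\sum_{\bm i\in[n]^d}p_{\bm i}h_{\bm i}$ and noting $\supp(p)=\supp(q^\dagger)$, the sum is really over $\supp(q^\dagger)$; splitting off the indices where $h=0$, $a=\sum_{\bm i\in\Bc}p_{\bm i}h_{\bm i}$, and $\sum_{\bm i\in\Bc}p_{\bm i}=:w\le 1$ with $w>0$ since $h$ is non-constant on $\Bc$ while $a$ is finite. If $w=1$ then $a$ is a strict convex combination (all $p_{\bm i}>0$ on $\Bc$) of the values $h_{\bm i}$, $\bm i\in\Bc$, which are not all equal, giving $m_-<a<m_+$ outright. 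If $w<1$, then $a=\sum_{\bm i\in\Bc}p_{\bm i}h_{\bm i}$ is $w$ times a strict convex combination of the $h_{\bm i}$ over $\Bc$ plus $(1-w)\cdot 0$; since $0\in[m_-,m_+]$ (as $0=h_{\bm i}$ for the indices in $\supp(q^\dagger)\setminus\Bc$, which are in $\supp(q^\dagger)$ and one can enlarge $\Bc$'s role accordingly — more cleanly: $0$ is a value of $h$ on $\supp(q^\dagger)$, and $\Lambda$'s limits are actually the min and max of $h$ over $\supp(q^\dagger)$), the point $a$ still lies strictly inside. The cleanest route, which I would adopt in the write-up, is to redefine the relevant min/max over $\supp(q^\dagger)$ rather than $\Bc$ from the outset; then $a=\sum_{\bm i\in\supp(q^\dagger)}p_{\bm i}h_{\bm i}$ is manifestly a strict convex combination of values of $h$ that are not all equal, so $a$ is strictly between their extremes, which are exactly the two limits of $\Lambda$.

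The main obstacle is the bookkeeping around indices in $\supp(q^\dagger)$ where $h$ vanishes, versus the set $\Bc$ used in the definition of $\Lambda$: one must confirm that $\lim_{\lambda\to\pm\infty}\Lambda(\lambda)$ are the extreme values of $h$ over all of $\supp(q^\dagger)$ (not just over $\Bc$), so that the strict-convex-combination argument applies without gaps. Everything else — continuity, monotonicity from part (i), and the convex combination inequality — is routine.
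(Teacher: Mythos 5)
Your proposal is correct in substance but takes a genuinely different route from the paper. The paper's argument is abstract: since $\Ec''$ is convex and closed and contains a $p$ with $\supp(p)=\supp(q^\dagger)$, Theorem~2.1 of \cite{Csi75} gives existence and uniqueness of the $I$-projection $q^\star$ of $q^\dagger$ on $\Ec''$, the remark following Theorem~2.2 gives $\supp(q^\star)=\supp(q^\dagger)$, and Theorem~3.1 then forces the exponential form $q^\star_{\bm i}\propto q^\dagger_{\bm i}\exp(\lambda h_{\bm i})$ for some $\lambda\in\R$; plugging this into the constraint $\sum_{\bm i} h_{\bm i} q^\star_{\bm i}=a$ yields $\Lambda(\lambda)=a$ directly. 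Your route is elementary and self-contained: view $\Lambda(\lambda)$ as the mean of $h$ under tilted weights supported on $\supp(q^\dagger)$, compute the limits at $\pm\infty$ as the extreme values of $h$ over $\supp(q^\dagger)$ (not over $\Bc$), and place $a=\sum_{\bm i\in\supp(q^\dagger)}p_{\bm i}h_{\bm i}$ strictly between them because all weights $p_{\bm i}$ are strictly positive on $\supp(q^\dagger)$. The paper's proof buys brevity and reuses the machinery already deployed in Proposition~\ref{prop:gen:h}(ii); yours avoids Csisz\'ar's theorems entirely and makes $\mathrm{ran}(\Lambda)$ explicit. Two small repairs are needed in your write-up: the degenerate case should be ``$h$ constant on $\supp(q^\dagger)$'' rather than ``constant on $\Bc$'' --- if $h$ equals a nonzero constant on $\Bc$ but vanishes somewhere on $\supp(q^\dagger)$, then $\Lambda$ is \emph{not} constant (it moves monotonically between $0$ and that constant), a situation your convex-combination argument over $\supp(q^\dagger)$ already handles; and strict monotonicity via Proposition~\ref{prop:gen:h}(i) (whose hypothesis is non-constancy on $\Bc$) is not actually needed, since continuity of $\Lambda$ together with the intermediate value theorem suffices for $a\in\mathrm{ran}(\Lambda)$, which also sidesteps the mismatch between the two non-constancy conditions.
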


We finally state a result showing that the use of Algorithm~\ref{algo:iterated:Iproj} with its Line~\ref{line:Iproj:Gc} based on Proposition~\ref{prop:gen:h} is theoretically justified for approximately solving the generalized minimum information checkerboard copula problem formulated as in~\eqref{eq:GMICC:p} under the following strengthening of Condition~\ref{cond:included}:

\begin{cond}
  \label{cond:equal}
  There exists $p \in \Ec$ in~\eqref{eq:Ec} such that $\supp(p) = \supp(r)$.
\end{cond}

It involves the same notation as the one defined above Corollary~\ref{cor:algo:GIS}. Its proof, given in Appendix~\ref{proofs:solving}, consists of combining Theorem 3.2 of \cite{Csi75} with Corollary~\ref{cor:IPFP:any:marg}, Lemma~\ref{lem:alpha:ran:Lambda} and Proposition~\ref{prop:gen:h}.

\begin{prop}
  \label{prop:algo:gen}
  Assume that Condition~\ref{cond:equal} holds. Then, the $I$-projection $q$ of $r$ on $\Ec$ in~\eqref{eq:Ec} exists and is unique. Assume also that, for any $K \in \Kc$, $h^K$ in~\eqref{eq:h:K} is non constant on $\supp(r)$. Furthermore, let $q^{[0]} := r$ and, for any $m \geq 1$ such that $\change{((m-1) \bmod N) + 1} \in [|\Jc'|]$, let $q^{[m]}$ be defined by
\begin{equation}
  \label{eq:qm:scaling}
  q^{[m]}_{\bm i} :=
  \begin{cases}
    q^{[m -1]}_{\bm i}  \frac{s^J_{\bm i_J}}{q^{[m-1],(J)}_{\bm i_J}}, &\qquad \text{if } \bm i \in \supp(q^{[m-1]}), \\
    0, &\qquad \text{otherwise},
  \end{cases}
\end{equation}
where $J = J_{\change{((m-1) \bmod N) + 1}}$ and, for any $m \geq 1$ such that $\change{((m-1) \bmod N) + 1} \in \{|\Jc'|+1,\dots,N\}$, let $q^{[m]}$ be defined by
\begin{equation}
  \label{eq:qm:gen}
  q^{[m]}_{\bm i} := \frac{q^{[m - 1]}_{\bm i} \exp [ \Lambda^{-1}(a) h_{\bm i} ]}{\sum_{\bm i \in [n]^d} q^{[m - 1]}_{\bm i} \exp [ \Lambda^{-1}(a) h_{\bm i}]}, \qquad \bm i \in [n]^d,
\end{equation}
where $\Lambda$ is defined in~\eqref{eq:Lambda}, $a := \alpha_K$ and $h := h^K$ with $K = K_{\change{((m-1) \bmod N) + 1}}$. Then, as $m \to \infty$, $q^{[m]}$ converges to $q$.
\end{prop}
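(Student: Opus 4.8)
The plan is to identify the iteration described in the statement as an instance of the generic iterated $I$-projection procedure of Theorem~3.2 of \cite{Csi75}, applied to the decomposition $\Ec = \bigcap_{m=1}^N \Ec_m$ with $\Ec_1,\dots,\Ec_{|\Jc'|}$ the Fréchet sets $\Fc_J$ and $\Ec_{|\Jc'|+1},\dots,\Ec_N$ the linear sets $\Lc_K$. First I would note that Condition~\ref{cond:equal} implies Condition~\ref{cond:included}, so Proposition~\ref{prop:unicity:p} (equivalently the cited results of \cite{Csi75}) yields existence and uniqueness of the $I$-projection $q$ of $r$ on $\Ec$, and moreover $\supp(q) = \supp(r)$: indeed Proposition~\ref{prop:unicity:p} gives $\supp(p) \subset \supp(q) \subset \supp(r)$ for every $p \in \Ec$ with $\supp(p) \subset \supp(r)$, and Condition~\ref{cond:equal} provides a $p$ with $\supp(p) = \supp(r)$, forcing $\supp(q) = \supp(r)$. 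This full-support property is what will let me invoke the hypotheses of the per-step results.

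The core of the argument is to verify that each step of the iteration is a genuine $I$-projection onto the corresponding $\Ec_m$, so that Theorem~3.2 of \cite{Csi75} applies verbatim. I would argue by induction that every iterate $q^{[m]}$ has support exactly $\supp(r)$. For the Fréchet steps ($(m \bmod N) \in [|\Jc'|]$): since $\supp(q^{[m-1]}) = \supp(r)$ and, by Condition~\ref{cond:equal}, there is a $p \in \Ec \subset \Fc_J$ with $p^{(J)} = s^J$ and $\supp(p) = \supp(r) = \supp(q^{[m-1]})$, the hypothesis $\supp(s^J) \subset \supp(q^{[m-1],(J)})$ of Corollary~\ref{cor:IPFP:any:marg} holds; that corollary then says \eqref{eq:qm:scaling} is exactly the $I$-projection of $q^{[m-1]}$ onto $\Fc_J$, and since the scaling factor $s^J_{\bm i_J}/q^{[m-1],(J)}_{\bm i_J}$ is strictly positive on $\supp(q^{[m-1]})$, we get $\supp(q^{[m]}) = \supp(r)$. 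For the linear steps ($(m \bmod N) \in \{|\Jc'|+1,\dots,N\}$ with $K = K_{(m\bmod N)}$): again there is $p \in \Ec \subset \Lc_K$ with $\sum_{\bm i} p_{\bm i} h^K_{\bm i} = \alpha_K$ and $\supp(p) = \supp(r) = \supp(q^{[m-1]})$, so Lemma~\ref{lem:alpha:ran:Lambda} (applied with $q^\dagger = q^{[m-1]}$, $h = h^K$, $a = \alpha_K$) gives $\alpha_K \in \mathrm{ran}(\Lambda)$; moreover the standing assumption that $h^K$ is non-constant on $\supp(r)\cap\supp(h^K) = \supp(q^{[m-1]})\cap\supp(h^K)$ (using $\supp(q^{[m-1]}) = \supp(r)$) is precisely part~(i) of Proposition~\ref{prop:gen:h}, so $\Lambda$ is strictly increasing, $\Lambda^{-1}(\alpha_K)$ is well defined, and part~(ii) identifies \eqref{eq:qm:gen} as the $I$-projection of $q^{[m-1]}$ onto $\Lc_K$. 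Since $\exp[\Lambda^{-1}(\alpha_K) h^K_{\bm i}] > 0$, again $\supp(q^{[m]}) = \supp(r)$, closing the induction.

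With the induction in place, the sequence $(q^{[m]})_{m \geq 0}$ defined by \eqref{eq:qm:scaling}--\eqref{eq:qm:gen} is exactly the sequence obtained from $q^{[0]} = r$ by successively $I$-projecting onto $\Ec_1,\dots,\Ec_N,\Ec_1,\dots$ in cyclic order. Since Problem~\eqref{eq:GMICC:p} is, by Section~\ref{sec:DIPL}, an instance of the discrete $I$-projection linear problem and Condition~\ref{cond:included} holds, Theorem~3.2 of \cite{Csi75} guarantees that this cyclic iterated $I$-projection sequence converges to the $I$-projection $q$ of $r$ on $\Ec = \bigcap_{m=1}^N \Ec_m$, which is the desired conclusion. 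The main obstacle is the bookkeeping needed to make the induction airtight — in particular, carefully checking that at every step the support of the current iterate equals $\supp(r)$, since this is simultaneously the output of each step and the hypothesis required (via Corollary~\ref{cor:IPFP:any:marg}, Lemma~\ref{lem:alpha:ran:Lambda} and Proposition~\ref{prop:gen:h}) to run the next step; the convergence itself is then an immediate appeal to the cited theorem.
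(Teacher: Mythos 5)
Your proposal is correct and follows essentially the same route as the paper's proof: existence and uniqueness via Condition~\ref{cond:equal} $\Rightarrow$ Condition~\ref{cond:included} (Theorem~2.1 of \cite{Csi75}), an induction showing $\supp(q^{[m]}) = \supp(r)$ for all $m$, identification of each scaling step as an $I$-projection via Corollary~\ref{cor:IPFP:any:marg} and each expectation step via Lemma~\ref{lem:alpha:ran:Lambda} together with Proposition~\ref{prop:gen:h}, and finally Theorem~3.2 of \cite{Csi75} for convergence of the cyclic $I$-projection sequence. The only cosmetic differences are that the paper isolates the support identity $\supp(s^J) = \supp(r^{(J)})$ as a preliminary step before the induction, whereas you fold it into the induction, and your observation that $\supp(q) = \supp(r)$ is correct but not needed.
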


\subsection{Interpreting the behavior of the generalized iterative scaling version of Algorithm~\ref{algo:iterated:Iproj}}
\label{sec:interpret}

In practice, when using one of the two studied versions of Algorithm~\ref{algo:iterated:Iproj} on a given instance of the generalized minimum information checkerboard copula problem \eqref{eq:GMICC:p}, one will not necessarily know in advance whether the conditions for convergence presented in Corollary~\ref{cor:algo:GIS} or Proposition~\ref{prop:algo:gen} are met. In this section, we focus on the generalized iterative scaling version of Algorithm~\ref{algo:iterated:Iproj} and propose an empirical strategy to assess whether Condition~\ref{cond:included} is likely to be satisfied, based on a simple result. As we shall see below, an adaption of the latter will allow us to obtain insight on marginal compatibility problems of interest.

Consider the following two errors which can be computed for every $m$ when executing Algorithm~\ref{algo:iterated:Iproj}:
\begin{equation}
  \label{eq:max:err}
  \Err_{\Jc'}^{[m]} := \max_{J \in \Jc'} \max_{\bm i_J \in [n]^{|J|}} \left| q^{[m],(J)}_{\bm i_J} - s^J_{\bm i_J} \right|
  \qquad \text{and, if }\Kc \neq \emptyset, \qquad 
  \Err_{\Kc}^{[m]} := \max_{K \in \Kc} \left| \sum_{\bm i \in [n]^d} q^{[m]}_{\bm i} h^K_{\bm i} - \alpha_K \right|.
\end{equation}
These diagnostics provide a numerical assessment of how the iterate $q^{[m]}$ satisfies the marginal constraints defining $\Ec$ relative to $\Jc'$ and $\Kc$, respectively. The following result is proven in Appendix~\ref{proofs:solving}.

\begin{prop}
  \label{prop:max:err}
  Assume that the generalized iterative scaling version of Algorithm~\ref{algo:iterated:Iproj} is executed with $\eps = 0$ and $M = \infty$. Then the following statements are equivalent: 
  \begin{itemize}
  \item[(i)] Condition~\ref{cond:included} holds.
  \item[(ii)] The $I$-projection $q$ of $r$ on $\Ec$ exists, is unique and $q = \lim_{m \to \infty} q^{[m]}$.
  \item[(iii)] $q' = \lim_{m \to \infty} q^{[m]}$ exists with $\supp(q') \subset \supp(r)$, $\lim_{m \to \infty} \Err_{\Jc'}^{[m]} = 0$ and $\lim_{m \to \infty} \Err_{\Kc}^{[m]} = 0$.
  \end{itemize}
\end{prop}

Proposition~\ref{prop:max:err} suggests the following \change{sequential} empirical strategy based on the generalized iterative scaling version of Algorithm~\ref{algo:iterated:Iproj} to empirically assess whether Condition~\ref{cond:included} holds (and thus whether the generalized iterative scaling version of Algorithm~\ref{algo:iterated:Iproj} might have converged to the $I$-projection of $r$ on $\Ec$):

\begin{enumerate}
\item Assess empirically whether $q' = \lim_{m \to \infty} q^{[m]}$ seems to exist by examining the evolution of the maximum absolute error in Line~\ref{line:num:conv} of Algorithm~\ref{algo:iterated:Iproj} against the iteration number. If there is evidence against it, stop and conclude that Condition~\ref{cond:included} is unlikely to hold. 

\item \change{If not,} assess whether $\supp(q') \subset \supp(r)$ by assessing whether $\supp(q^{[m']}) \subset \supp(r)$, where $m'$ is the maximum iteration number reached in Algorithm~\ref{algo:iterated:Iproj}. If there is evidence against it, stop and conclude that Condition~\ref{cond:included} is unlikely to hold. 

\item \change{If not,} assess whether $\lim_{m \to \infty} \Err_{\Jc'}^{[m]} = 0$ and $\lim_{m \to \infty} \Err_{\Kc}^{[m]} = 0$ by examining the evolution of $\Err_{\Jc'}^{[m]}$ and $\Err_{\Kc}^{[m]}$ against the iteration number. If there is evidence against it, stop and conclude that Condition~\ref{cond:included} is unlikely to hold.  

\item \change{If not,} conclude that Condition~\ref{cond:included} is likely to hold and thus that $q^{[m']}$ is likely to be an approximation of $q$, the $I$-projection of $r$ on $\Ec$. 
\end{enumerate}

The following result follows immediately from Proposition~\ref{prop:max:err} and the fact that Condition~\ref{cond:included} is equivalent to $\Ec \neq \emptyset$ if $\supp(r) = [n]^d$ (since in that case $\supp(p) \subset \supp(r)$ for all $p \in \Pc_{d,n}$).

\begin{cor}
  \label{cor:max:err}
  Assume that $\supp(r) = [n]^d$ and that the generalized iterative scaling version of Algorithm~\ref{algo:iterated:Iproj} is executed with $\eps = 0$ and $M = \infty$. Then the following statements are equivalent:
  \begin{itemize}
  \item[(i)] $\Ec \neq \emptyset$.
  \item[(ii)] The $I$-projection $q$ of $r$ on $\Ec$ exists, is unique and $q = \lim_{m \to \infty} q^{[m]}$.
  \item[(iii)] $q' = \lim_{m \to \infty} q^{[m]}$ exists, $\lim_{m \to \infty} \Err_{\Jc'}^{[m]} = 0$ and $\lim_{m \to \infty} \Err_{\Kc}^{[m]} = 0$.
  \end{itemize}
\end{cor}

As a consequence of Corollary~\ref{cor:max:err}, when $\supp(r) = [n]^d$, one can adapt the previous empirical strategy to attempt to assess whether $\Ec \neq \emptyset$ (that is, the consistency of the constraints) by merely removing Step~2. This is illustrated in the supplement \cite{KojMar25} for marginal compatibility problems.

\begin{remark}
  \label{rem:nonconv}
  A practical difficulty with the above empirical strategy arises from the fact that distinguishing between nonconvergence and slow convergence may be impossible in practice. For a theoretical perspective on such issues in the case of the bivariate iterated proportional fitting procedure, see, e.g., \cite{BroLeu18}. \qed
\end{remark}

\begin{remark}
  \label{rem:max:err}
The reason why the two errors in~\eqref{eq:max:err} are not merged into one global error is practical: as our experiments show, they are typically on different scales, so that a practitioner may not want to use the same threshold to decide whether they are sufficiently small or not. Of course, the errors $\Err_{\Kc}^{[m]}$ could also be further split according to the cardinality of $K$ and the nature of the underlying expectation. Furthermore, the errors in~\eqref{eq:max:err} could be used to define alternative stopping criteria in Algorithm~\ref{algo:iterated:Iproj} but we do not pursue this here. Finally, note that, to save run time when executing Algorithm~\ref{algo:iterated:Iproj}, we suggest to compute the diagnostics in~\eqref{eq:max:err} only for a reasonably limited number of values of $m$ (for instance for a subset of those satisfying $\change{((m-1) \bmod N) + 1 = N}$). \qed
\end{remark}


\section{Selected numerical experiments}
\label{sec:num}

To illustrate the results of the previous section, we applied them to attempt to approximately solve the generalized minimum information checkerboard copula problem in~\eqref{eq:GMICC} when:
\begin{enumerate}
\item $\check R = \check U_d = U_d$, where $U_d$ is probability measure of the uniform distribution on $[0,1]^d$,

\item if $\Kc \neq \emptyset$, all the $K \in \Kc$ are of cardinality~2 and all the functions $G_K$ are equal to the function $\rho$ in~\eqref{eq:rho} related to Spearman's rho.

\end{enumerate}
Under the choice made in Point~1, solving~\eqref{eq:GMICC} can be interpreted as applying a maximum entropy principle as already mentioned in the introduction. Concerning Point~2, from~\eqref{eq:g:rho} and~\eqref{eq:h:K}, we immediately obtain that, for any $K = \{\ell_1,\ell_2\} \in \Kc$ and $\bm i \in [n]^d$,
\begin{equation}
  \label{eq:h:rho}
  \begin{split}
    h^{\{\ell_1,\ell_2\}}_{\bm i} &= n^2 \int_{\frac{i_{\ell_1}-1}{n}}^{\frac{i_{\ell_1}}{n}} \int_{\frac{i_{\ell_2}-1}{n}}^{\frac{i_{\ell_2}}{n}} 12 \left(v_1 - \frac{1}{2} \right) \left(v_2 - \frac{1}{2}\right) \dd v_1 \dd v_2 \\
                                  &= 12 \left(\frac{i_{\ell_1}-1}{n} + \frac{1}{2n} - \frac{1}{2} \right) \left( \frac{i_{\ell_2}-1}{n} + \frac{1}{2n} - \frac{1}{2} \right) \\
                                  &= g_\rho \left(\frac{i_{\ell_1}-1}{n} + \frac{1}{2n}, \frac{i_{\ell_2}-1}{n} + \frac{1}{2n} \right).
  \end{split}
\end{equation}
Note that, as verified for instance in \cite[Section~4]{PiaHowBor12}, for any $\check P \in \check\Cc_{n}([0,1]^2)$, $\rho(\check P) \in [-1 + 1/n^2, 1 - 1/n^2]$. Hence, for a given value of the discretization parameter~$n$, all the $\alpha_K$ in~\eqref{eq:GMICC} need to be taken in $[-1 + 1/n^2, 1 - 1/n^2]$. Of course, even if $\Jc = \emptyset$ (recall for instance from~\eqref{eq:Jc'} that $\Jc = \Jc' \setminus \{\{1\},\dots,\{d\}\}$), this is not sufficient to guarantee that the constraints are consistent \citep[see, e.g.,][Section~6]{PiaHowBor12}.

From Section~\ref{sec:sol}, the resulting generalized minimum information checkerboard copula problem can then be reformulated in terms of probability arrays as $\min_{p \in \Ec} I(p \| u)$, where $\Ec$ is defined in~\eqref{eq:Ec} and $u \in \Cc_{d,n}$ is the skeleton of $\check U_d$. Note that since $\supp(u) = [n]^d$, Condition~\ref{cond:included} is simply equivalent to $\Ec \neq \emptyset$. From Proposition~\ref{prop:unicity:p} and Section~\ref{sec:solving}, to approximately find the unique solution $q$ of the aforementioned problem when $\Ec \neq \emptyset$, we have the following two possibilities: if there exists $p \in \Ec$ such that $\supp(p) \subsetneq [n]^d$, we can use Algorithm~\ref{algo:iterated:Iproj} with its Line~\ref{line:Iproj:Gc} implemented using generalized iterative scaling with $M' = 1$ as considered in Section~\ref{sec:operational:Gc:GIS}; if Condition~\ref{cond:equal} with $r = u$ holds, that is, if there exists $p \in \Ec$ such that $\supp(p) = [n]^d$, in addition to the previous approach, we can use Algorithm~\ref{algo:iterated:Iproj} with its Line~\ref{line:Iproj:Gc} implemented using~\eqref{eq:qm:gen} as considered in Section~\ref{sec:operational:Gc:new}. For ease of reference, we shall refer to the former (resp.\ latter) version of Algorithm~\ref{algo:iterated:Iproj} as Procedure~I (resp.~II). Note that the theoretical validity of Procedure II under Condition~\ref{cond:equal} with $r = u$ follows from Proposition~\ref{prop:algo:gen} combined with the fact that, for any $\{\ell_1,\ell_2\} \in \Kc$, $h^{\{\ell_1,\ell_2\}}$ in~\eqref{eq:h:rho} is not constant on $[n]^2$ as soon as $n \geq 2$. As already mentioned, to compute $\Lambda^{-1}$ in~\eqref{eq:qm:gen} in practice, we use numerical root finding.

As already discussed in Section~\ref{sec:interpret}, in practice, given a value of the discretization parameter $n$ and an instance of the considered version of the generalized minimum information checkerboard copula problem, we will not necessarily know in advance whether $\Ec \neq \emptyset$ or if there exists $p \in \Ec$ such that $\supp(p) = [n]^d$. To help interpret the behavior of Procedure~I, we therefore rely on the empirical strategy proposed in Section~\ref{sec:interpret} once adapted to the setting $\supp(r) = [n]^d$ corresponding to Corollary~\ref{cor:max:err}. More precisely, when executing Procedure~I, we may conclude that the underlying optimization problem admits a solution (equivalently, that $\Ec \neq \emptyset$), and that the last array returned by Algorithm~\ref{algo:iterated:Iproj} provides a reasonable approximation of that solution, if and only if the iterates $q^{[m]}$ appear to stabilize as $m$ increases and if the quantities $\Err_{\Jc'}^{[m]}$ and $\Err_{\Kc}^{[m]}$ defined in~\eqref{eq:max:err} become sufficiently small for large $m$. This is particularly important in relation to marginal compatibility problems as illustrated in the experiments reported in the supplement \cite{KojMar25}. 

The quantities $\Err_{\Jc'}^{[m]}$ and $\Err_{\Kc}^{[m]}$ defined in~\eqref{eq:max:err}, and the maximum absolute error in Line~\ref{line:num:conv} of Algorithm~\ref{algo:iterated:Iproj} actually provide fundamental diagnostics for both procedures. When analyzing them, we need to keep in mind the warning given in Remark~\ref{rem:nonconv}: distinguishing between nonconvergence and slow convergence may be difficult in practice.

Our numerical experiments consisted of considering several instances of the generalized minimum information checkerboard copula problem with $\check R = U_d$ and Spearman's rho constraints, and attempting to solve them for various choices of the discretization parameter~$n$. Clearly, possible values for $n$ depend on the value of $d$ as Algorithm~\ref{algo:iterated:Iproj} manipulates probability arrays of $n^d$ elements. To allow us to consider relatively large values of $n$ (so that, roughly speaking, the checkerboard problem is a good approximation of the corresponding generalized minimum information copula problem -- see \eqref{eq:approx}), we restricted ourselves to the low-dimensional case, that is, $d \in \{2,3,4\}$. For the sake of brevity, we only (partially) report hereafter our results for $d \in \{2,4\}$. Additional figures as well as the results for $d = 3$, related to sanity checks and marginal compatibility problems, among others, are available in the supplement \cite{KojMar25}.

\subsection{Bivariate experiment with a Spearman's rho constraint}

We begin with a simple bivariate example with $\Kc = \{\{1,2\}\}$ and $\alpha_{\{1,2\}} = 0.8$ (note that then $\Jc = \emptyset$, that is, $\Jc' = \{\{1\}, \{2\}\}$). This setting is similar to some of those considered in \cite{SukSei25a}. We first consider $n = 30$. The parameters $\eps$ and $M$ of Algorithm~\ref{algo:iterated:Iproj} are fixed to $10^{-14}$ and $10 \, 000$, respectively. The following output provides a summary of the execution of our \textsf{R} implementation when Line~\ref{line:Iproj:Gc} of Algorithm~\ref{algo:iterated:Iproj} is implemented using generalized iterative scaling with $M' = 1$ (Procedure I):
\begin{small}
  \verbatiminput{output/d2n30GIS.txt}
\end{small}
When Line~\ref{line:Iproj:Gc} is implemented using~\eqref{eq:qm:gen} (Procedure II), we obtain:
\begin{small}
  \verbatiminput{output/d2n30.txt}
\end{small}
As one can see, for both versions of Algorithm~\ref{algo:iterated:Iproj}, the convergence criterion is satisfied although approximately 10 times more iterations were necessary for Procedure I. The evolution of the maximum absolute error (in Line~\ref{line:num:conv} of Algorithm~\ref{algo:iterated:Iproj}) and of the diagnostics in~\eqref{eq:max:err} against the iteration number are given in Figure~\ref{fig:2d:n30:errs} of the supplement \cite{KojMar25}. From the last column of plots, one can see that the numerical satisfaction of the Spearman's rho constraint keeps on improving with the iteration number for Procedure I while for Procedure II the error stagnates at $10^{-5}$ approximately. We believe that this is merely a consequence of the (default) precision with which we run the numerical root finding necessary to execute Procedure~II.

From the output of Procedure II for instance, we see that the maximum absolute error (in Line~\ref{line:num:conv} in Algorithm~\ref{algo:iterated:Iproj}) dropped below $\eps = 10^{-14}$ after 144 iterations. Since $|\Jc'|+|\Kc|=3$, this error is the maximum absolute error between the probability arrays $q^{[3 \times 144]}$ and $q^{[3 \times 143]}$. Here, $q^{[3 \times 144]}$ is the final approximation of the $I$-projection of $u$ on the set $\Ec$ in~\eqref{eq:Ec}. We also see that the maximum absolute error between the first (resp.\ second) margin of $q^{[3 \times 144]}$ and $u_1$ is smaller than $10^{-13}$ and that $\rho(\check Q^{[3 \times 144]}) \approx 0.799992$. In other words, the output confirms ``numerically'' that the probability array $q^{[3 \times 144]}$ has uniform margins (i.e., that it is a copula array) and that the corresponding checkerboard copula has the desired Spearman's rho. The total execution time with our \textsf{R} implementation (on a standard laptop computer) was approximately 0.003~min. Such a timing has no absolute meaning but can be used to compare this example with the forthcoming examples in terms of computational cost.  We next generate one realization of a random sample of size $1000$ from a discrete random vector $(V_1,V_2)$ with support
\begin{equation}
  \label{eq:Xc}
  \Xc := \left\{ \left(\frac{i_1-1}{n} + \frac{1}{2n}, \frac{i_2-1}{n} + \frac{1}{2n} \right) : (i_1,i_2) \in [n]^2 \right\}
\end{equation}
and probability array $q^{[3 \times 144]}$. Note that the points in $\Xc$ are the centers of the $B_{\bm i}$ in~\eqref{eq:B:i} when $d=2$. The resulting scatterplot is represented in the left panel of Figure~\ref{fig:2d:n30}. Following Remark~\ref{rem:check}, one may argue that it is more natural to sample from $\check Q^{[3 \times 144]}$: the resulting scatterplot is given in the right panel of Figure~\ref{fig:2d:n30}. 

\begin{figure}[t!]
  \begin{center}
    \includegraphics*[width=0.3\linewidth]{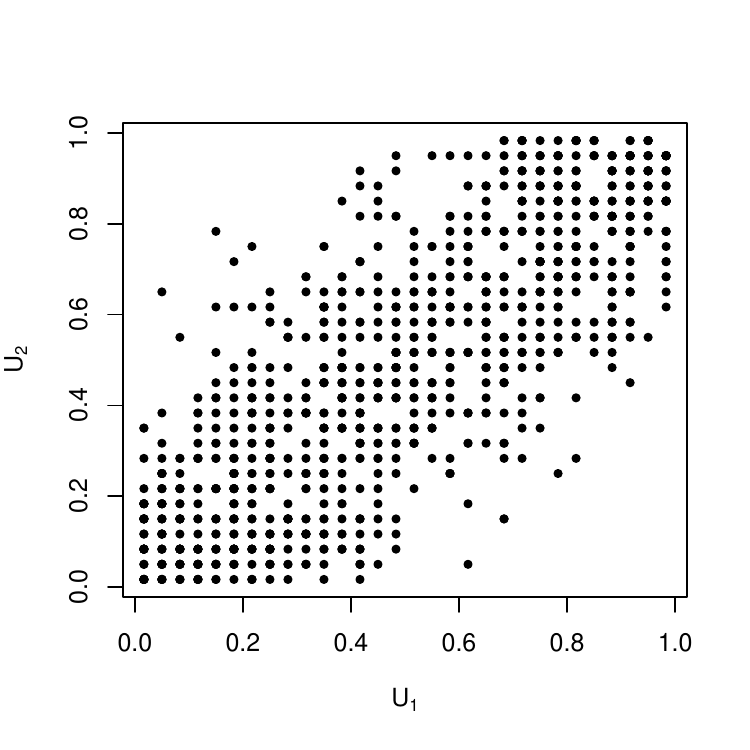}
    \includegraphics*[width=0.3\linewidth]{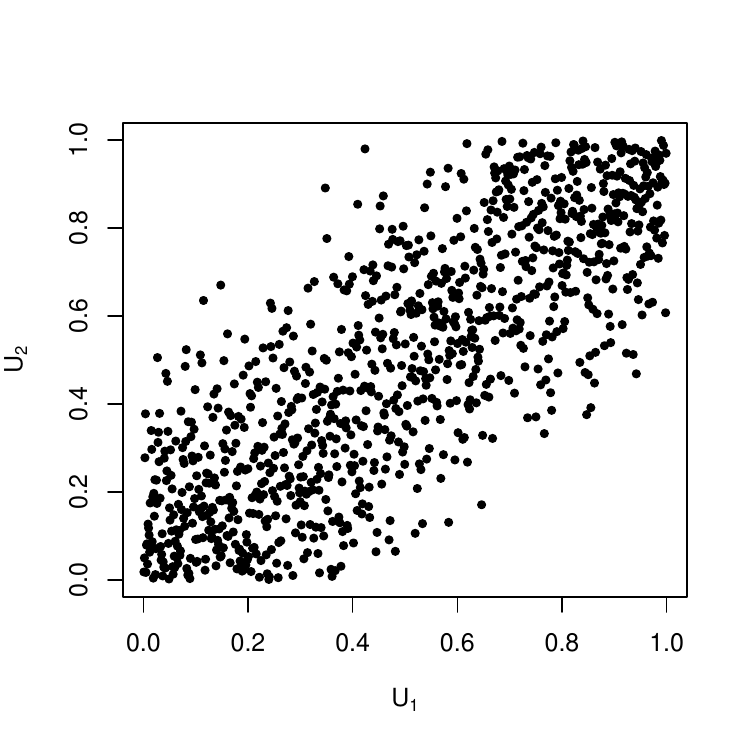}
    \caption{\label{fig:2d:n30} Results obtained with Procedure II when $n = 30$. Left: one realization of a random sample of size $1000$ from a discrete random vector $(V_1,V_2)$ with support $\Xc$ in~\eqref{eq:Xc} and probability array $q^{[3 \times 144]}$. Right: one realization of a random sample of size $1000$ from $\check Q^{[3 \times 144]}$.}
\end{center}
\end{figure}

We next rerun the previous experiment with $n=300$. The execution outputs, given below, are very similar with the exception that the execution times are now approximately 100 times greater:
\begin{small}
  \verbatiminput{output/d2n300GIS.txt}
\end{small}
\begin{small}
  \verbatiminput{output/d2n300.txt}
\end{small}
We also see that the generalized iterative scaling version of Algorithm~1, that is, Procedure I (first output) leads (again) to a better numerical satisfaction of the Spearman's rho constraint. Realizations of random samples from the discrete model obtained from Procedure II and the associated checkerboard copula are represented in Figure~\ref{fig:2d:n300} of the supplement \cite{KojMar25}. Unsurprisingly, since $n=300$, there are no visually noticeable differences between the empirical distributions in the left and right panels.

\subsection{Trivariate experiments}

The results of 7 additional trivariate experiments related to sanity checks and marginal compatibility problems, among others, are reported in the supplement \cite{KojMar25}.

\subsection{A 4-dimensional experiment}
\label{sec:num:d4}

We end this section with a 4-dimensional experiment for which
$$
\Jc = \{\{1,2\}, \{1,3\}\} \qquad \text{and} \qquad \Kc = \{ \{1,4\}, \{2,3\}, \{2,4\}, \{3,4\} \}.
$$
We set $\eps = 10^{-10}$, $M = 10 \, 000$ and $n=100$. Furthermore, we take the skeletons of $\check S^{\{1,2\}}$ and $\check S^{\{1,3\}}$ in~\eqref{eq:GMICC} to correspond to discretizations of the Clayton copula with parameter~3 and the Gumbel--Hougaard copula with parameter~2, respectively. In addition, we set $\alpha_{\{1,4\}} = 0.7$, $\alpha_{\{2,3\}} = 0.3$, $\alpha_{\{2,4\}} = 0.4$ and $\alpha_{\{3,4\}} = 0.7$. We only execute Procedure II as we expect it to be substantially faster than Procedure I for this setting. The resulting execution output is:
\begin{small}
  \verbatiminput{output/d4.txt}
\end{small}
As one can see, the numerical convergence criterion is satisfied and the resulting probability array appears to satisfy the imposed constraints rather well. The large execution time (1 hour approximately) for only 173 iterations can be explained by the fact that the procedure manipulated probability arrays with $100^4$ elements. A realization of a random sample from the returned 4-dimensional checkerboard copula is displayed in Figure~\ref{fig:4d} while diagnostic (error) plots can be found in Figure~\ref{fig:4d:err} of the supplement \cite{KojMar25}.

\begin{figure}[t!]
\begin{center}
  \includegraphics*[width=0.7\linewidth]{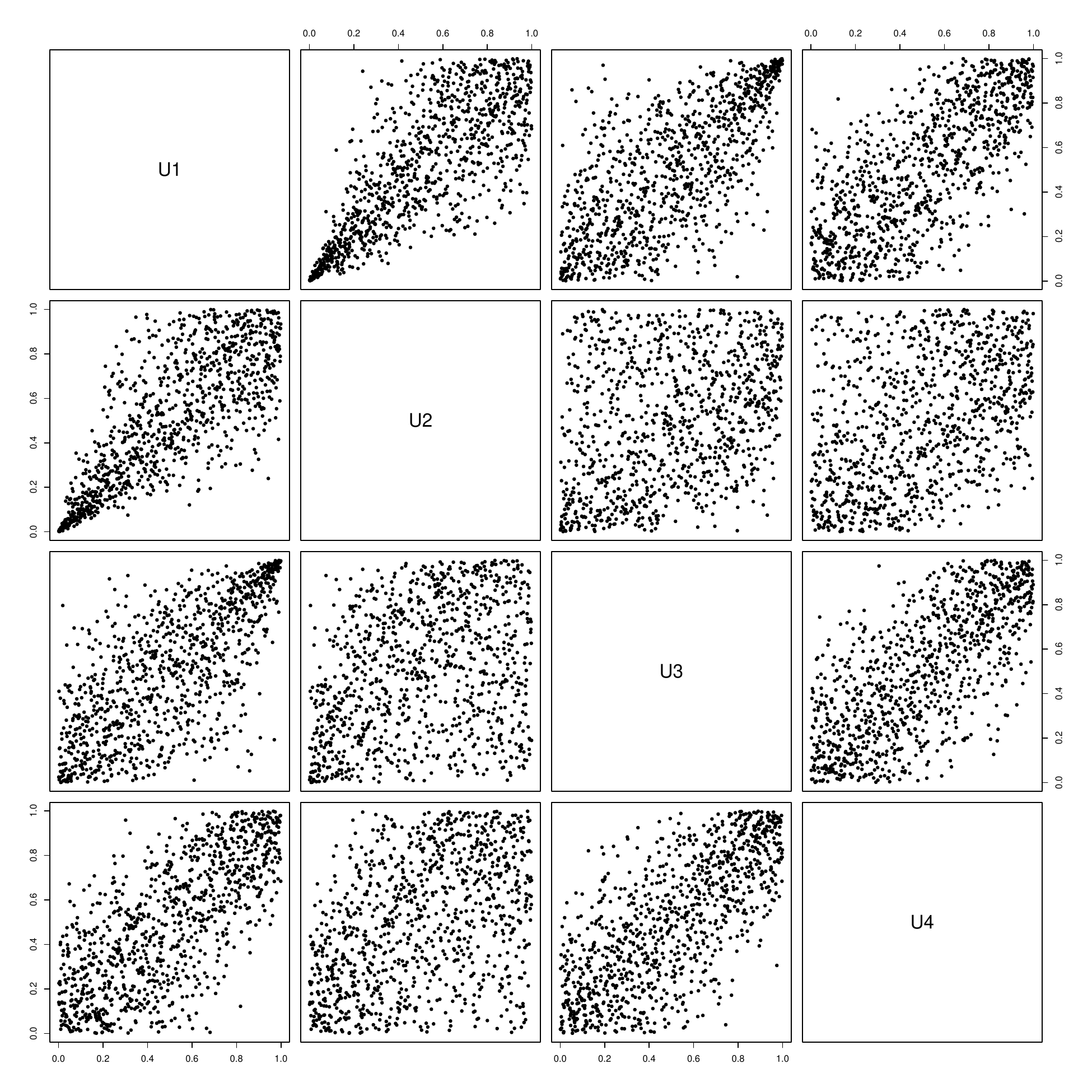}
  \caption{\label{fig:4d} Scatterplot matrix of a realization of a random sample of size $1000$ from the 4-dimensional checkerboard copula returned in the 4-dimensional experiment.}
\end{center}
\end{figure}

\section{Concluding remarks}
\label{sec:conclusion}

We conclude this work by three brief remarks on possible extensions of this work: 
\begin{itemize}


\item The algorithmic approach derived in Section~\ref{sec:solving} for attempting to solve the generalized minimum information checkerboard copula problem works on the skeletons of the underlying checkerboard probability measures. Such probability arrays turn out to be also at the heart of the so-called discrete copula approach initially put forward in \cite{Gee20}, and further studied in \cite{KojMar24,GeeKojMar25}. As a consequence, in future work, the proposed iterative $I$-projection procedure could be directly used to solve what could be called the generalized minimum information discrete copula problem. The only difference would be that the latter would involve probability arrays whose $d$ dimensions have possibly different sizes.


\item \change{As mentioned in Section~\ref{sec:num}, the main practical issue related to the use of the proposed algorithmic procedure in its general form is that it manipulates arrays with $n^d$ elements. With our non optimal implementation, we found it possible to set $n = 100$ when $d=4$ (see Section~\ref{sec:num:d4}). Note that the same memory storage cost would be incurred by setting $n = 10$ when $d = 8$. In future research, we will investigate under which conditions memory saving implementations of the proposed algorithmic procedure are possible.}

\item \change{Another challenge related to the use of the proposed algorithmic procedure when $d$ increases is that the number of possible constraints grows exponentially with $d$. The larger the number of constraints, the higher the probability of inconsistency. A possible strategy when the execution of Algorithm~\ref{algo:iterated:Iproj} suggests that the constraints are inconsistent (see Section~\ref{sec:interpret}) would be to explore the space of subproblems (problems in which one or more constraints related to $J \in \Jc$ or $K \in \Kc$ are removed) to try to identify which constraints may cause the possible global inconsistency.}

\end{itemize}


\setcounter{section}{0}
\renewcommand{\thesection}{\Alph{section}}

\section{Proofs of the results of Section~\ref{sec:MIC}}
\label{proofs:MIC}

\begin{proof}[\bf Proof of Proposition~\ref{prop:unicity}]
For any $\ell \in [d]$, let $S^{\{\ell\}} \in \Mc([0,1])$ be equal to $U_1$, the probability measure of the univariate standard uniform distribution, and let $F_{\{\ell\}} :=  \{P \in \Mc([0,1]^d) : P^{(\{\ell\})} = S^{\{\ell\}} \}$. This extends the definition in~\eqref{eq:F:J} to all $J \in \Jc'$, where $\Jc'$ is defined in~\eqref{eq:Jc'}. Then, using the fact that $\Cc([0,1]^d) = \bigcap_{\ell \in [d]} F_{\{\ell\}}$, $E$ in~\eqref{eq:E} can be rewritten as $E = \bigcap_{J \in \Jc'} F_J \cap \bigcap_{K \in \Kc} L_K$. Next, it is easy to verify each $F_J$, $J \in \Jc'$, and each $L_K$ in~\eqref{eq:L:K} is convex, and that this implies that $E$ is convex. Furthermore, from Lemmas~\ref{lem:F:closed} and~\ref{lem:G:closed} below, each $F_J$ and each $L_K$ is closed in total variation, which then implies that $E$ is closed in total variation. Since $E$ is convex and closed in total variation, and there exists $P \in E$ such that $I(P \| R) < \infty$, we know from Theorem~2.1 of \cite{Csi75} that there exists a unique $Q \in E$ such that $I(Q \| R) = \min_{P \in E} I(P \| R) < \infty$. Hence, $Q \ll R$ from~\eqref{eq:KL}. Furthermore, from the remark following Theorem~2.2 in \cite{Csi75}, $P \ll Q$ for all $P \in E$ such that $I(P \|R) < \infty$.
\end{proof}

\begin{lem}
  \label{lem:F:closed}
  For any $J \in \Jc'$, the set $F_J$ in~\eqref{eq:F:J} is closed in total variation.
\end{lem}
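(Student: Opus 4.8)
The plan is to use the fact that total variation makes $\Mc([0,1]^d)$ into a metric space, so that closedness of $F_J$ can be checked along sequences, together with the observation that the marginalization map $P \mapsto P^{(J)}$ is non-expansive for the total variation distance. Throughout, write $\|P - Q\|_{\mathrm{TV}} := \sup_{B} |P(B) - Q(B)|$ for the supremum over measurable sets $B$ (the usual factor of $2$ plays no role here).

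The key step is the following non-expansiveness property: the map $\Mc([0,1]^d) \to \Mc([0,1]^{|J|})$ given by $P \mapsto P^{(J)} = P \circ \pi_J^{-1}$ satisfies $\|P^{(J)} - Q^{(J)}\|_{\mathrm{TV}} \leq \|P - Q\|_{\mathrm{TV}}$ for all $P, Q \in \Mc([0,1]^d)$. This is immediate from the definition in~\eqref{eq:P:J}: for every $B \in \Bc_{\R^{|J|}}$ one has $P^{(J)}(B) - Q^{(J)}(B) = P(\pi_J^{-1}(B)) - Q(\pi_J^{-1}(B))$, and $\pi_J^{-1}(B) \in \Bc_{\R^d}$ because $\pi_J$ is continuous hence Borel measurable; taking the supremum over $B \in \Bc_{\R^{|J|}}$ on the left-hand side is therefore bounded by the supremum over all Borel sets of $\R^d$ on the right-hand side, which is exactly $\|P - Q\|_{\mathrm{TV}}$.

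Granting this, the conclusion is one line. If $F_J = \emptyset$ there is nothing to prove, so assume it is nonempty and let $(P_k)_{k \geq 1}$ be a sequence in $F_J$ converging to some $P$ in total variation. Then $P_k^{(J)} = S^J$ for every $k$, while $\|P_k^{(J)} - P^{(J)}\|_{\mathrm{TV}} \leq \|P_k - P\|_{\mathrm{TV}} \to 0$ by the key step. Uniqueness of limits in the metric space $(\Mc([0,1]^{|J|}), \|\cdot\|_{\mathrm{TV}})$ then forces $P^{(J)} = S^J$, i.e., $P \in F_J$, and hence $F_J$ is closed in total variation.

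I do not expect any genuine obstacle here: the statement amounts to the continuity of marginalization for the total variation topology. The only points requiring a little care are fixing the convention for the total variation distance and checking that $\pi_J^{-1}(B)$ is Borel, both of which are routine since $\pi_J$ is continuous. (The companion Lemma~\ref{lem:G:closed} for the sets $L_K$ should follow along the same lines, using in addition that $g_K$ is bounded continuous on the compact set $[0,1]^{|K|}$, so that $P \mapsto G_K(P^{(K)})$ is continuous for the total variation — in fact even for the weak — topology.)
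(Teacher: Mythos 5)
Your proof is correct and follows essentially the same route as the paper: the paper likewise takes a sequence $(P_m)\subset F_J$ converging in total variation, notes that marginalization is continuous (the paper phrases this as $\tau(P_m,P)\to 0$ implying $\tau(P_m^{(J)},P^{(J)})\to 0$, which is exactly your non-expansiveness step), and then identifies $P^{(J)}$ with $S^J$ — the paper via setwise convergence, you via uniqueness of limits in the total variation metric, an immaterial difference.
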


\begin{proof}
  Let $\tau$ denote the total variation metric and fix $J \in \Jc'$. Next, let $(P_m)_{m \in \N} \in F_J$ such that $\lim_{m \to \infty} \tau(P_m,P) = 0$ for some $P \in \Mc([0,1]^d)$. Using the definition of $\tau$ as well as the definitions of $P_m^{(J)}$ and $P^{(J)}$ (see Section~\ref{sec:margins}), it is easy to verify that $\lim_{m \to \infty} \tau(P_m,P) = 0$ implies that $\lim_{m \to \infty} \tau(P_m^{(J)},P^{(J)}) = 0$. Since convergence in total variation implies setwise convergence, we then have that
  \begin{equation}
    \label{eq:setwise:1}
    \lim_{m \to \infty} P_m^{(J)}(B) = P^{(J)}(B), \qquad \forall B \in \Bc_{\R^{|J|}}.
  \end{equation}
  Furthermore, by assumption, $P_m \in F_J$ for all $m \in \N$, that is, $P_m^{(J)} = S^J$ for all $m \in \N$. This implies that
 \begin{equation}
    \label{eq:setwise:2}
    \lim_{m \to \infty} P_m^{(J)}(B) = \lim_{m \to \infty} S^J(B) = S^J(B), \qquad \forall B \in \Bc_{\R^{|J|}}.
  \end{equation}
Hence, by~\eqref{eq:setwise:1} and~\eqref{eq:setwise:2}, $P^{(J)}(B) = S^J(B)$ for all $B \in \Bc_{\R^{|J|}}$, that is, $P \in F_J$.
\end{proof}

\begin{lem}
  \label{lem:G:closed}
  For any $K \in \Kc$, the set $L_K$ in~\eqref{eq:L:K} is closed in total variation.
\end{lem}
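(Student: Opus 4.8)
The plan is to mimic the structure of the proof of Lemma~\ref{lem:F:closed}, the only new ingredient being that the constraint defining $L_K$ is an integral constraint rather than a marginal-identity constraint. First I would make explicit that $P \mapsto G_K(P^{(K)})$ is integration against $P$ of a fixed bounded measurable function. Indeed, by the change-of-variables formula for the pushforward $P^{(K)} = P \circ \pi_K^{-1}$ (see Section~\ref{sec:margins}),
\[
G_K(P^{(K)}) = \int_{[0,1]^{|K|}} g_K(\bm w)\,\dd P^{(K)}(\bm w) = \int_{[0,1]^d} g_K\big(\pi_K(\bm v)\big)\,\dd P(\bm v),
\]
so the functional in question equals integration against $P$ of $\tilde g_K := g_K \circ \pi_K$, which is measurable and, since $g_K$ is continuous on the compact set $[0,1]^{|K|}$, bounded, say $\|\tilde g_K\|_\infty \le M_K < \infty$.

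Next I would take $(P_m)_{m \in \N} \in L_K$ with $\lim_{m \to \infty}\tau(P_m,P) = 0$ for some $P \in \Mc([0,1]^d)$, where $\tau$ denotes the total variation metric, and pass to the limit in the constraint. The key step is that convergence in total variation implies convergence of integrals of bounded measurable functions:
\[
\left| \int_{[0,1]^d} \tilde g_K \,\dd P_m - \int_{[0,1]^d} \tilde g_K \,\dd P \right| \le 2 M_K\, \tau(P_m,P) \xrightarrow[m\to\infty]{} 0 .
\]
Equivalently, one may first deduce setwise convergence $P_m(B) \to P(B)$ for all $B$, exactly as in the proof of Lemma~\ref{lem:F:closed}, and then combine the uniform bound $|\tilde g_K| \le M_K$ with the bounded convergence theorem. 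Either way, $G_K(P_m^{(K)}) \to G_K(P^{(K)})$; since $P_m \in L_K$ gives $G_K(P_m^{(K)}) = \alpha_K$ for every $m$, the limit is $\alpha_K$, hence $G_K(P^{(K)}) = \alpha_K$, i.e., $P \in L_K$.

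I do not expect a genuine obstacle here: the entire content is that a linear integral constraint against a fixed bounded (continuous, hence measurable) integrand is preserved under total variation limits. The only points requiring a little care are making the reduction to an integral against $P$ explicit and selecting the cleanest justification for interchanging the limit and the integral — either the direct $\|\cdot\|_\infty$-versus-$\tau$ estimate above, or an appeal to the bounded convergence theorem via setwise convergence, so as to match the style already used for Lemma~\ref{lem:F:closed}.
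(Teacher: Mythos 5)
Your proof is correct and follows essentially the same route as the paper: take a sequence in $L_K$ converging in total variation and pass to the limit in the expectation constraint, exploiting that the integrand (here $g_K \circ \pi_K$, continuous hence bounded) is bounded. The paper justifies the interchange by noting that total variation convergence implies weak convergence and $g_K$ is continuous and bounded, while you use the direct estimate $\bigl| \int g_K \circ \pi_K \, \dd(P_m - P) \bigr| \le 2 \| g_K \|_\infty \, \tau(P_m,P)$; both are valid, and your explicit use of the pushforward identity $G_K(P^{(K)}) = \int_{[0,1]^d} g_K(\pi_K(\bm v)) \, \dd P(\bm v)$ only makes precise a step the paper leaves implicit.
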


\begin{proof}
  Fix $K \in \Kc$ and let $(P_m)_{m \in \N} \in L_K$ such that $\lim_{m \to \infty} \tau(P_m,P) = 0$ for some $P \in \Mc([0,1]^d)$. Since $(P_m)_{m \in \N} \in L_K$, for any $m \in \N$,
  \begin{equation}
    \label{eq:moment}
    \int_{[0,1]^d} g_K(\bm v) \dd P_m(\bm v) = \alpha_K.
  \end{equation}
  Since convergence in total variation implies weak convergence and the function $g_K$ is continuous (and thus bounded) on $[0,1]^d$, $\lim_{m \to \infty} \tau(P_m,P) = 0$ implies that
  $$
  \lim_{m \to \infty} \int_{[0,1]^d} g_K(\bm v) \dd P_m(\bm v) = \int_{[0,1]^d} g_K(\bm v) \dd P(\bm v),
  $$
  so that from~\eqref{eq:moment}, $\int_{[0,1]^d} g_K(\bm v) \dd P(\bm v) = \alpha_K$, and hence from~\eqref{eq:G:K}, $P \in L_K$.
\end{proof}


\section{Proofs of the results of Section~\ref{sec:solving}}
\label{proofs:solving}

The proof of Corollary~\ref{cor:IPFP:any:marg} (given below) is a consequence of the first result mentioned in Section~5.1 of \cite{CsiShi04}. For completeness, we first provide a statement of the latter with our notation along with a proof.

\begin{prop}
  \label{prop:scaling}
  Let $q^\dagger \in \Pc_{d,n}$, let $\{\Bc_1,\dots,\Bc_b\}$  be a partition of $[n]^d$ and let $a_1,\dots,a_b \in [0,1]$ such that $\sum_{k = 1}^b a_k = 1$ and $a_k = 0$ if $\Bc_k \subset [n]^d \setminus \supp(q^\dagger)$. 
  Furthermore, for any $k \in [b]$, let $h_k \in \Ac_{d,n}$ be defined by $h_{k, \bm i} = \1_{\Bc_k}(\bm i)$, $\bm i \in [n]^d$. Then, the $I$-projection $q^\star$ of $q^\dagger$ on
$$
\Ec' = \bigcap_{k \in [b]} \left\{p \in \Pc_{d,n} : \sum_{\bm i \in [n]^d} p_{\bm i} h_{k,\bm i} = a_k \right\} = \bigcap_{k \in [b]} \left\{p \in \Pc_{d,n} : \sum_{\bm i \in \Bc_k} p_{\bm i}  = a_k \right\}
$$
exists, is unique and is given by

  \begin{equation}
    \label{eq:Iproj:scaling}
    q^\star_{\bm i} = \begin{cases}
    q^\dagger_{\bm i} \sum_{k \in [b]} \1_{\Bc_k}(\bm i)\frac{a_k}{\sum_{\bm i' \in \Bc_k} q^\dagger_{\bm i'}}, &\qquad \text{if } \bm i \in \supp(q^\dagger), \\
      0, &\qquad \text{otherwise}.
       \end{cases}
   \end{equation}

  \end{prop}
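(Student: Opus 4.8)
The plan is to prove Proposition~\ref{prop:scaling} by exhibiting the candidate $q^\star$ of~\eqref{eq:Iproj:scaling} directly as the $I$-projection via a Pythagorean-type identity, rather than by going through the general exponential-family characterization of $I$-projections onto linear families. Throughout, write $k(\bm i)$ for the unique index with $\bm i \in \Bc_{k(\bm i)}$ and $Z_k := \sum_{\bm i \in \Bc_k} q^\dagger_{\bm i}$ for the $q^\dagger$-mass of the $k$-th block. The first observation is that the hypothesis ``$a_k = 0$ whenever $\Bc_k \subset [n]^d \setminus \supp(q^\dagger)$'' is exactly ``$Z_k = 0 \Rightarrow a_k = 0$'', equivalently ``$a_k > 0 \Rightarrow Z_k > 0$'', so the ratio $a_k/Z_k$ in~\eqref{eq:Iproj:scaling} is well defined for every block with $a_k > 0$ and equals $0$ for every block with $a_k = 0$; hence $q^\star$ is unambiguously defined.

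First I would verify that $q^\star$ is a legitimate element of $\Ec'$. Nonnegativity is immediate. Summing~\eqref{eq:Iproj:scaling} over $\bm i \in \Bc_k$ (recalling $q^\dagger_{\bm i} = 0$ off $\supp(q^\dagger)$) gives $\sum_{\bm i \in \Bc_k} q^\star_{\bm i} = (a_k/Z_k) Z_k = a_k$ when $a_k > 0$ and $0 = a_k$ otherwise, so $q^\star \in \Ec'$; summing this over $k$ and using $\sum_{k}a_k = 1$ shows $q^\star \in \Pc_{d,n}$. In particular $\Ec'$ contains $q^\star$, and $\supp(q^\star) \subset \supp(q^\dagger)$, so by the finite-support remark of Section~\ref{sec:arrays}, $I(q^\star \| q^\dagger) < \infty$; Theorem~2.1 of \cite{Csi75} (transferred to probability arrays via Remark~\ref{rem:extension}), applied to the convex set $\Ec'$, then already guarantees that the $I$-projection of $q^\dagger$ on $\Ec'$ exists and is unique. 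It remains to identify it with $q^\star$.

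The key step is the Pythagorean-type identity $I(p \| q^\dagger) = I(p \| q^\star) + I(q^\star \| q^\dagger)$, valid for every $p \in \Ec'$ with $I(p \| q^\dagger) < \infty$. To establish it I would first note that such a $p$ satisfies $\supp(p) \subset \supp(q^\star)$: if $p_{\bm i} > 0$ then $\bm i \in \supp(q^\dagger)$ (finiteness of $I(p\|q^\dagger)$) and $a_{k(\bm i)} = \sum_{\bm j \in \Bc_{k(\bm i)}} p_{\bm j} \geq p_{\bm i} > 0$, whence $q^\star_{\bm i} > 0$ by~\eqref{eq:Iproj:scaling}. Then on $\supp(p)$ one writes $\log(p_{\bm i}/q^\dagger_{\bm i}) = \log(p_{\bm i}/q^\star_{\bm i}) + \log(q^\star_{\bm i}/q^\dagger_{\bm i})$ with $q^\star_{\bm i}/q^\dagger_{\bm i} = a_{k(\bm i)}/Z_{k(\bm i)}$, multiplies by $p_{\bm i}$ and sums over $\bm i \in \supp(p)$: the first piece is $I(p\|q^\star)$, and grouping the second piece by blocks and using $\sum_{\bm i \in \Bc_k} p_{\bm i} = a_k$ turns it into $\sum_{k : a_k > 0} a_k \log(a_k/Z_k)$, which is exactly $I(q^\star \| q^\dagger)$ by the same block-grouping applied to $q^\star$. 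Once the identity holds, nonnegativity of the Kullback--Leibler divergence, with equality if and only if the two arrays coincide, gives $I(p\|q^\dagger) \geq I(q^\star\|q^\dagger)$ for all $p \in \Ec'$ (the inequality being trivial when $I(p\|q^\dagger)=\infty$), with equality precisely when $p = q^\star$; since $q^\star \in \Ec'$, this shows $q^\star$ is the unique $I$-projection of $q^\dagger$ on $\Ec'$, which finishes the proof.

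I do not anticipate a genuine obstacle: the only delicate points are the bookkeeping around blocks with $a_k = 0$ (handled by the equivalence noted at the outset together with the conventions $0\log 0 = 0$ and $0\log(0/0)=0$ fixed in~\eqref{eq:KL:discrete}) and the inclusion $\supp(p) \subset \supp(q^\star)$, which is precisely what keeps every logarithm in the Pythagorean identity finite and thus makes the additivity of the three divergences legitimate.
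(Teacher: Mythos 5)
Your proof is correct, and it takes a mildly but genuinely different route from the paper at the key step. Both arguments begin identically: check that $q^\star$ in~\eqref{eq:Iproj:scaling} is a probability array belonging to $\Ec'$ with $\supp(q^\star)\subset\supp(q^\dagger)$, and invoke Theorem~2.1 of \cite{Csi75} (via Remark~\ref{rem:extension}) for existence and uniqueness of the $I$-projection on the convex, closed set $\Ec'$. Where the paper then quotes the grouping form of Jensen's inequality (Lemma~4.1 of \cite{CsiShi04}) to get the lower bound $\sum_k a_k\log\bigl(a_k/\sum_{\bm i\in\Bc_k}q^\dagger_{\bm i}\bigr)\leq I(p\|q^\dagger)$ and separately computes $I(q^\star\|q^\dagger)$ to match it, you instead establish the exact Pythagorean identity $I(p\|q^\dagger)=I(p\|q^\star)+I(q^\star\|q^\dagger)$ for every feasible $p$ with finite divergence, using the block-constancy of $q^\star_{\bm i}/q^\dagger_{\bm i}=a_{k(\bm i)}/Z_{k(\bm i)}$ and the carefully checked inclusion $\supp(p)\subset\supp(q^\star)$, and then conclude by Gibbs' inequality $I(p\|q^\star)\geq 0$ with equality iff $p=q^\star$. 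Your identity is the sharper statement: it quantifies the excess divergence as exactly $I(p\|q^\star)$ (the discrete analogue of Csiszár's Pythagorean relation for this partition-induced linear family), so optimality \emph{and} uniqueness of the minimizer follow directly, making your appeal to Theorem~2.1 of \cite{Csi75} strictly redundant; the paper's route is slightly shorter because it outsources the inequality to a cited lemma, but it needs the external uniqueness theorem. The only cosmetic imprecision in your write-up is the claim that $a_k/Z_k$ ``equals $0$'' for blocks with $a_k=0$: when $Z_k=0$ as well the ratio is formally $0/0$, but since the indicator $\1_{\Bc_k}(\bm i)$ vanishes for all $\bm i\in\supp(q^\dagger)$ in that case, those terms never actually enter~\eqref{eq:Iproj:scaling}, so nothing breaks.
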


\begin{proof}[\bf Proof of Proposition~\ref{prop:scaling}]

  Let us first check that $q^\star$ in~\eqref{eq:Iproj:scaling} is a probability array. It is easy to see from~\eqref{eq:Iproj:scaling} and the assumptions on the $a_k$ that $q^\star_{\bm i} \geq 0$ for all $\bm i \in [n]^d$. Furthermore,
  \begin{align*}
    \sum_{\bm i \in [n]^d} q^\star_{\bm i} &= \sum_{\bm i \in \supp(q^\dagger)} q^\star_{\bm i} = \sum_{\bm i \in \supp(q^\dagger)}q^\dagger_{\bm i} \sum_{k \in [b]} \1_{\Bc_k}(\bm i)\frac{a_k}{\sum_{\bm i' \in \Bc_k} q^\dagger_{\bm i'}} \\
                                           &= \sum_{k \in [b]}  \frac{a_k}{\sum_{\bm i' \in \Bc_k} q^\dagger_{\bm i'}} \sum_{\bm i \in \supp(q^\dagger)}q^\dagger_{\bm i} \1_{\Bc_k}(\bm i) = 1.
  \end{align*}

  Let us next check that the probability array $q^\star$ in~\eqref{eq:Iproj:scaling} belongs to $\Ec'$. This is equivalent to verifying that, for any $k \in [b]$, $\sum_{\bm i \in \Bc_k} q^\star_{\bm i} = a_k$. On one hand, if $\Bc_k \subset [n]^d \setminus \supp(q^\dagger)$, $\sum_{\bm i \in \Bc_k} q^\star_{\bm i} = 0$ by~\eqref{eq:Iproj:scaling} and $a_k = 0$ from the assumptions. On the other end, if $\Bc_k \cap \supp(q^\dagger) \neq \emptyset$, $\sum_{\bm i \in \Bc_k} q^\star_{\bm i} = a_k$ from~\eqref{eq:Iproj:scaling}. Hence, $q^\star$ in~\eqref{eq:Iproj:scaling} belongs to $\Ec'$.

  Since $\supp(q^\star) \subset \supp(q^\dagger)$, it follows that there exists $p \in \Ec'$ such that $\supp(p) \subset \supp(q^\dagger)$. Let $\Dc(q^\dagger) = \{p \in \Pc_{d,n} :\supp(p) \subset \supp(q^\dagger)\}$. The fact that the $I$-projection of $q^\dagger$ on $\Ec'$ exists and is unique then follows from Theorem~2.1 of \cite{Csi75} (since $\Ec'$ is convex and closed) and, from~\eqref{eq:KL:discrete}, we have that
  \begin{equation}\label{eq:discr:supp:Iproj}
  \min_{p \in \Ec'} I(p \| q^\dagger) = \min_{p \in \Ec' \cap \Dc(q^\dagger)} I(p \| q^\dagger).
  \end{equation}
 From Jensen's inequality \citep[see also][Lemma~4.1]{CsiShi04}, we know that, for any $p \in \Pc_{d,n} \cap \Dc(q^\dagger)$,
  $$
  \sum_{k \in [b]} \left(\sum_{\bm i \in \Bc_k} p_{\bm i}\right) \log \left(\frac{\sum_{\bm i \in \Bc_k} p_{\bm i}}{\sum_{\bm i \in \Bc_k} q^\dagger_{\bm i}} \right) \leq I(p \| q^\dagger),
  $$
  with the conventions that $0 \log 0 = 0$ and $0 \log(0/0) = 0$. This implies that, for any $p \in \Ec' \cap \Dc(q^\dagger)$,
  $$
  \sum_{k \in [b]} a_k \log \left(\frac{a_k}{\sum_{\bm i \in \Bc_k} q^\dagger_{\bm i}} \right) \leq I(p \| q^\dagger).
  $$
  However,
  \begin{align*}
    I(q^\star \| q^\dagger) &= \sum_{\bm i \in \supp(q^\dagger)} q^\dagger_{\bm i} \sum_{k \in [b]} \1_{\Bc_k}(\bm i)\frac{a_k}{\sum_{\bm i' \in \Bc_k} q^\dagger_{\bm i'}} \log \left(  \sum_{k' \in [b]} \1_{\Bc_{k'}}(\bm i)
                              \frac{a_{k'}}{\sum_{\bm i' \in \Bc_{k'}} q^\dagger_{\bm i'}} \right) \\
                            &= \sum_{k \in [b]}  \frac{a_k}{\sum_{\bm i' \in \Bc_k} q^\dagger_{\bm i'}} \sum_{\bm i \in \supp(q^\dagger)}  \1_{\Bc_k}(\bm i) q^\dagger_{\bm i} \log \left(  \frac{a_k}{\sum_{\bm i' \in \Bc_k} q^\dagger_{\bm i'}} \right) = \sum_{k \in [b]} a_k \log \left(\frac{a_k}{\sum_{\bm i \in \Bc_k} q^\dagger_{\bm i}} \right),
  \end{align*}
  which implies that $I(q^\star \| q^\dagger) = \min_{p \in \Ec' \cap \Dc(q^\dagger)} I(p \| q^\dagger)$ and thus, from~\eqref{eq:discr:supp:Iproj}, that $q^\star$ is indeed the $I$-projection of $q^\dagger$ on $\Ec'$.
\end{proof}


\begin{proof}[\bf Proof of Corollary~\ref{cor:IPFP:any:marg}]
  Consider the partition $\{ \Bc_{\bm i_J^* } \}_{\bm i_J^* \in [n]^{|J|}}$ of $[n]^d$, where $\Bc_{\bm i_J^* } = \{ \bm i \in [n]^d: \bm i_J = \bm i_J^* \}$. Then, since $\bm i \in \Bc_{\bm i_J^*} \iff \bm i_J = \bm i_J^*$,
  $$
  \Fc_J 
  = \bigcap_{\bm i_J^* \in [n]^{|J|}} \left\{p \in \Pc_{d,n} :   p^{(J)}_{\bm i_J^*} = s^J_{\bm i_J^*} \right\} = \bigcap_{\bm i_J^* \in [n]^{|J|}} \left\{p \in \Pc_{d,n} :   \sum_{\bm i \in \Bc_{\bm i_J^* }} p_{\bm i} = s^J_{\bm i_J^*} \right\}.
  $$
  Note that the $s^J_{\bm i_J^*}$, $\bm i_J^* \in [n]^{|J|}$, are nonnegative and sum up to one and, by definition, the assumption that $\supp (s^J) \subset \supp(q^{\dagger, (J)})$ is equivalent to $q^{\dagger, (J)}_{\bm i_J^*} = \sum_{\bm i \in \Bc_{\bm i_J^* }} q^\dagger_{\bm i} = 0$ implies $s^J_{\bm i_J^*} = 0$. Hence, $\Bc_{\bm i_J^* } \subset [n]^d \setminus \supp(q^{\dagger})$ implies $s^J_{\bm i_J^*} = 0$. We can then apply Proposition~\ref{prop:scaling} to obtain that the $I$-projection $q^\star$ of $q^\dagger$ on $\Fc_J$ exists, is unique and is given by
  $$
  q^\star_{\bm i} = q^\dagger_{\bm i} \sum_{\bm i_J^* \in [n]^{|J|}} \1_{\Bc_{\bm i_J^*}}(\bm i)\frac{s^J_{\bm i_J^*}}{\sum_{\bm i' \in \Bc_{\bm i_J^*}} q^\dagger_{\bm i'}} = q^\dagger_{\bm i} \frac{s^J_{\bm i_J}}{\sum_{\bm i' \in \Bc_{\bm i_J}} q^\dagger_{\bm i'}} = q^\dagger_{\bm i} \frac{s^J_{\bm i_J}}{q^{\dagger,(J)}_{\bm i_J}},
  $$
  where we have used that $\bm i \in \Bc_{\bm i_J^*} \iff \bm i_J = \bm i_J^*$.
\end{proof}

\begin{proof}[\bf Proof of Lemma~\ref{lem:Gc:GIS}]
  Fix $K \in \Kc$. Note that, for any $\bm i \in [n]^d$, $\bar h^K_{\bm i} \in [0,1]$, and $\bar a_K \in [0,1]$. Furthermore, from Theorem~\ref{thm:GIS}, some thought reveals that~\eqref{eq:update:GIS:Gc} is the analog of~\eqref{eq:update:GIS} when attempting to $I$-project on $\Lc' \cap \Lc''$, where
  $$
  \Lc' := \left\{p \in \Pc_{d,n} : \sum_{\bm i \in [n]^d} p_{\bm i} \bar h^K_{\bm i} = \bar a_K \right\} \; \text{and} \; \Lc'' := \left\{p \in \Pc_{d,n} : \sum_{\bm i \in [n]^d} p_{\bm i} (1 - \bar h^K_{\bm i}) = 1 - \bar a_K \right\}.
  $$
  But it is easy to see $\Lc' = \Lc''$ so that $\Lc' \cap \Lc'' = \Lc'$. Moreover,
  \begin{align*}
    p \in \Lc' &\iff \sum_{\bm i \in [n]^d} p_{\bm i} \bar h^K_{\bm i} = \bar a_K \iff \sum_{\bm i \in [n]^d} p_{\bm i} \frac{h^K_{\bm i} - \delta_K}{\Delta_K - \delta_K} = \frac{\alpha_K - \delta_K}{\Delta_K - \delta_K} \iff p \in \Lc_K,
  \end{align*}
  so that $\Lc' = \Lc_K$ and the proof is complete.
\end{proof}

\begin{proof}[\bf Proof of Corollary~\ref{cor:algo:GIS}]
  First, from Lemma~\ref{lem:Gc:GIS}, we know that~\eqref{eq:qm:GIS} is a suitable specialization of~\eqref{eq:update:GIS} in Theorem~\ref{thm:GIS} when attempting to use generalized iterative scaling to $I$-project on some $\Lc_K$ in~\eqref{eq:Lc:K}. In other words, the setting of Corollary~\ref{cor:algo:GIS} does correspond to Algorithm~\ref{algo:iterated:Iproj} with its Line~\ref{line:Iproj:Gc} based on generalized iterative scaling with $M'=1$. Next, as already mentioned, Condition~\ref{cond:included} implies that there exists $p \in \Ec$ in~\eqref{eq:Ec} such that $\supp(p) \subset \supp(r)$. Let $\Dc(r) = \{p \in \Pc_{d,n} :\supp(p) \subset \supp(r)\}$. The fact that the $I$-projection $q$ of $r$ on $\Ec$ exists and is unique then follows from Theorem~2.1 of \cite{Csi75} (since $\Ec$ is convex and closed) and, from~\eqref{eq:KL:discrete}, we have that $I(q \| r) = \min_{p \in \Ec} I(p \| r) = \min_{p \in \Ec \cap \Dc(r)} I(p \| r)$. It follows that we can ignore all the elements of the arrays in $\Pc_{d,n}$ that appear in the formulation of the $I$-projection problem that belong to $[n]^d \setminus \supp(r)$. After further vectorization, the $I$-projection problem can seen as consisting of attempting to $I$-project a strictly positive probability vector on a nonempty intersection of affine subspaces. This is the setting considered in Section~3 of \cite{LinSte23} and the desired result is then merely a consequence of Theorem~3.1 therein.
\end{proof}

\begin{proof}[\bf Proof of Proposition~\ref{prop:gen:h}]
  Let us verify the first claim. The function $\Lambda$ is clearly differentiable on $\R$ and thus continuous on $\R$.  Let $f$ and $g$ be the functions defined, for any $\lambda \in \R$, by
  $$
  f(\lambda) =  \sum_{\bm i \in [n]^d} h_{\bm i} \, q^\dagger_{\bm i} \exp(\lambda h_{\bm i}) \qquad \text{and} \qquad g(\lambda) = \sum_{\bm i \in [n]^d} q^\dagger_{\bm i}  \exp(\lambda h_{\bm i}).
  $$
  The derivative of the function $\Lambda$ is then
  $$
  \Lambda'(\lambda) = \frac{f'(\lambda) g(\lambda) - f(\lambda) g'(\lambda)}{g(\lambda)^2}, \qquad \lambda \in \R,
  $$
  with
  $$
f'(\lambda) = \sum_{\bm i \in [n]^d} h_{\bm i}^2 q^\dagger_{\bm i} \exp(\lambda h_{\bm i}) \qquad \text{and} \qquad g'(\lambda) = \sum_{\bm i \in [n]^d} h_{\bm i} \, q^\dagger_{\bm i} \exp(\lambda h_{\bm i}) = f(\lambda),
$$
so that
$$
\Lambda'(\lambda) = \frac{f'(\lambda) g(\lambda) - f(\lambda)^2}{g(\lambda)^2}, \qquad \lambda \in \R.
$$
Let $\lambda \in \R$  and let $z$ be the array of $\Ac_{d,n}$ defined by $z_{\bm i} = (q^\dagger_{\bm i} \exp(\lambda h_{\bm i}))^{1/2}$, $\bm i \in [n]^d$. Then, by the Cauchy-Schwarz inequality,
  $$
  f(\lambda)^2 =   \left(\sum_{\bm i \in [n]^d} h_{\bm i} z_{\bm i} \times z_{\bm i} \right)^2  < \left(\sum_{\bm i \in [n]^d} h_{\bm i}^2 z_{\bm i}^2 \right)\left(\sum_{\bm i \in [n]^d} z_{\bm i}^2 \right) = f'(\lambda) g(\lambda),
  $$
  where the strict inequality is a consequence of the fact that $(h_{\bm i} z_{\bm i})_{\bm i \in [n]^d}$ is not a scalar multiple of $(z_{\bm i})_{\bm i \in [n]^d}$  (since the array $h$ is not constant on $\supp(q^\dagger) = \supp(z)$). It follows that $\Lambda'(\lambda) > 0$ for all $\lambda \in \R$.

Let us now prove the second claim. The function $\Lambda$ is a strictly increasing bijection from $\R$ to $\mathrm{ran}(\Lambda)$. Since $a \in \mathrm{ran}(\Lambda)$, the real $\Lambda^{-1}(a)$ is well-defined and so is the array $q^\star$ given by~\eqref{eq:Iproj:gen}. It is in addition easy to verify that the latter is a probability array. Notice also from~\eqref{eq:Iproj:gen} that $\supp(q^\star) = \supp(q^\dagger)$. Furthermore,
\begin{align}
  \label{eq:hq:alpha}
  \sum_{\bm i \in [n]^d}  h_{\bm i} q^\star_{\bm i} = \frac{\sum_{\bm i \in [n]^d} h_{\bm i} \, q^\dagger_{\bm i} \exp(\Lambda^{-1}(a) h_{\bm i})}{\sum_{\bm i \in [n]^d} q^\dagger_{\bm i}  \exp(\Lambda^{-1}(a) h_{\bm i})} = \Lambda(\Lambda^{-1}(a)) = a.
\end{align}
In other words, the probability array $q^\star$ in~\eqref{eq:Iproj:gen} belongs to $\Ec''$. Since $\supp(q^\star) = \supp(q^\dagger)$, it follows that there exists $p \in \Ec''$ such that $\supp(p) = \supp(q^\dagger)$.  The fact that the $I$-projection $q^\ddagger$ of $q^\dagger$ on $\Ec''$ exists and is unique then follows from Theorem~2.1 of \cite{Csi75} (since $\Ec''$ is convex and closed). From the remark following Theorem~2.2 in the same reference, we also have that $\supp(q^\ddagger) = \supp(q^\dagger)$. Using the latter in combination with Theorem 3.1 of \cite{Csi75}, we obtain that
 \begin{equation}
  \label{eq:Iproj:lin}
  q^\ddagger_{\bm i} = q^\dagger_{\bm i} \kappa \exp \left(\lambda h_{\bm i} \right), \qquad \bm i \in [n]^d,
\end{equation}
for some unknown constants $\kappa > 0$ and $\lambda \in \R$. Combining~\eqref{eq:Iproj:lin} with the constraint $\sum_{\bm i \in [n]^d} h_{\bm i} q_{\bm i}^\ddagger = a$, we obtain that
  \begin{equation}
    \label{eq:one}
    \sum_{\bm i \in [n]^d} h_{\bm i} q^\dagger_{\bm i} \kappa \exp \left(\lambda h_{\bm i} \right) = a
  \end{equation}
  while combining~\eqref{eq:Iproj:lin} with the constraint $\sum_{\bm i  \in [n]^d} q^\ddagger_{\bm i} = 1$ gives
  \begin{equation}
    \label{eq:two}
    \kappa = \left( \sum_{\bm i \in [n]^d} q^\dagger_{\bm i} \exp \left(\lambda h_{\bm i} \right) \right)^{-1}.
  \end{equation}
 From~\eqref{eq:one} and~\eqref{eq:two}, we immediately obtain that $\Lambda(\lambda) = a$, that is, $\lambda = \Lambda^{-1}(a)$. The fact that $q^\ddagger$ is equal to $q^\star$ in~\eqref{eq:Iproj:gen} finally follows from~\eqref{eq:Iproj:lin} and~\eqref{eq:two}.
\end{proof}

\begin{proof}[\bf Proof of Lemma~\ref{lem:alpha:ran:Lambda}]
Since there exists $p \in \Ec''$ such that $\supp(p) = \supp(q^\dagger)$ and $\Ec''$ is convex and closed, Theorem~2.1 of \cite{Csi75} implies that the $I$-projection $q^\star$ of $q^\dagger$ on $\Ec''$ exists and is unique. From the remark following Theorem~2.2 in the same reference, we have that $\supp(q^\star) = \supp(q^\dagger)$. Using Theorem~3.1 of \cite{Csi75} as in the proof of the second claim of Proposition~\ref{prop:gen:h}, we next obtain that there exists $\lambda \in \R$ such that $q^\star$ is given by
$$
q^\star_{\bm i} = \frac{q^\dagger_{\bm i} \exp ( \lambda h_{\bm i} )}{\sum_{\bm i \in [n]^d} q^\dagger_{\bm i} \exp ( \lambda h_{\bm i}) }, \qquad \bm i \in [n]^d.
$$
Observe that
$$
\sum_{\bm i \in [n]^d} q^\star_{\bm i} h_{\bm i} = \frac{\sum_{\bm i \in [n]^d} h_{\bm i} q^\dagger_{\bm i} \exp ( \lambda h_{\bm i} )}{\sum_{\bm i \in [n]^d}  q^\dagger_{\bm i} \exp ( \lambda h_{\bm i})} = \Lambda(\lambda).
$$
Since $q^\star$ belongs to $\Ec''$, we finally obtain that $\Lambda(\lambda) = a$.
\end{proof}

\begin{proof}[\bf Proof of Proposition~\ref{prop:algo:gen}]
  Since there exists $p \in \Ec$ such that $\supp(p) \subset \supp(r)$, and $\Ec$ is convex and closed, Theorem~2.1 of \cite{Csi75} implies that the $I$-projection $q$ of $r$ on $\Ec$ exists and is unique.

  Let us next check that
  \begin{equation}
    \label{eq:s:J:r:J}
    \supp(s^J) = \supp(r^{(J)}) \qquad \text{for all } J \in \Jc'.
  \end{equation}
  According to Condition~\ref{cond:equal}, there exists $p \in \Ec$ such that $\supp(p) = \supp(r)$. The latter implies that $\supp(p^{(J)}) = \supp(r^{(J)})$ for all $J \in \Jc'$. But since $p \in \Ec \subset \bigcap_{J \in \Jc'} \Fc_J$, where $\Fc_J$ is defined in~\eqref{eq:Fc:J}, $p^{(J)} = s^J$ for all $J \in \Jc'$, so that we also have that $\supp(p^{(J)}) = \supp(s^J)$ for all $J \in \Jc'$, and~\eqref{eq:s:J:r:J} is verified.

  We shall now verify by induction that
  \begin{equation}
    \label{eq:induction}
    \supp(q^{[m]}) = \supp(r) \qquad \text{for all } m\geq 0.
  \end{equation}
  We have that $\supp(q^{[0]}) = \supp(r)$. Let us assume that $\supp(q^{[m-1]}) = \supp(r)$ for some $m \geq 1$. If $q^{[m]}$ is computed from $q^{[m-1]}$ via~\eqref{eq:qm:scaling}, we have that $s^J_{\bm i_J}/q^{[m-1],(J)}_{\bm i_J} > 0$ for all $\bm i \in \supp(q^{[m-1]})$ as a consequence of the fact that $\supp(q^{[m-1]}) = \supp(r)$ and~\eqref{eq:s:J:r:J} (since $\supp(q^{[m-1],(J)}) = \supp(r^{(J)}) = \supp(s^J)$ and $\bm i \in \supp(q^{[m-1]})$ implies that $\bm i_J \in \supp(q^{[m-1],(J)})$). Hence, $\supp(q^{[m]}) = \supp(q^{[m-1]})$ from~\eqref{eq:qm:scaling} and thus $\supp(q^{[m]}) = \supp(r)$. If $q^{[m]}$ is computed from $q^{[m-1]}$ via~\eqref{eq:qm:gen}, we straightforwardly have that $\supp(q^{[m]}) = \supp(q^{[m-1]})$ and thus $\supp(q^{[m]}) = \supp(r)$. Hence, \eqref{eq:induction} is proven.

  The fact $q^{[m]}$ converges to $q$ as $m \to \infty$ will then follow from Theorem 3.2 of \cite{Csi75} once we show that, for any $m \geq 1$, $q^{[m]}$ is the $I$-projection of $q^{[m-1]}$ on $\Ec_{\change{((m-1) \bmod N) + 1}}$. If $q^{[m]}$ is computed from $q^{[m-1]}$ via~\eqref{eq:qm:scaling}, the fact that $q^{[m]}$ is the $I$-projection of $q^{[m-1]}$ on $\Ec_{\change{((m-1) \bmod N) + 1}}$ follows by applying Corollary~\ref{cor:IPFP:any:marg} with $q^\dagger = q^{[m-1]}$ and $\Fc_J = \Fc_{J_{\change{((m-1) \bmod N) + 1}}}$ (the corollary is indeed applicable since $\supp(s^J) = \supp(q^{[m-1],(J)})$ by~\eqref{eq:s:J:r:J} and~\eqref{eq:induction}). If $q^{[m]}$ is computed from $q^{[m-1]}$ via~\eqref{eq:qm:gen} (we are thus computing the $I$-projection on some $\Lc_K$ in~\eqref{eq:Lc:K}), we first apply Lemma~\ref{lem:alpha:ran:Lambda} with $q^\dagger = q^{[m-1]}$, $h = h^K$, $a = \alpha_K$ and $K = K_{\change{((m-1) \bmod N) + 1}}$ (the lemma is indeed applicable since, by Condition~\ref{cond:equal} and~\eqref{eq:induction}, there exists $p \in \Ec \subset \Lc_K$ such that $\supp(p) = \supp(r) = \supp(q^{[m-1]})$) to obtain that $a \in \mathrm{ran}(\Lambda)$ and then Proposition~\ref{prop:gen:h}~(ii) with $q^\dagger = q^{[m-1]}$, $h = h^K$, $a = \alpha_K$ and $K = K_{\change{((m-1) \bmod N) + 1}}$ (the proposition is indeed applicable since $h^K$ is assumed to be non constant on $\supp(r)$, which, by~\eqref{eq:induction}, implies that $h^K$ is non constant on $\supp(q^{[m-1]})$) to obtain that $q^{[m]}$ in~\eqref{eq:qm:gen} is the $I$-projection of $q^{[m-1]}$ on $\Ec_{\change{((m-1) \bmod N) + 1}}$.
\end{proof}

\begin{proof}[\bf Proof of Proposition~\ref{prop:max:err}] $\strut$ \\
$(i) \Rightarrow (ii)$: The result follows from Corollary~\ref{cor:algo:GIS}. \\
$(ii) \Rightarrow (iii)$: Let $A$ and $B$ be the functions from $\Pc_{d,n}$ to $[0,\infty)$ defined by
  $$
  A(p) := \max_{J \in \Jc'} \max_{\bm i_J \in [n]^{|J|}} \left| p^{(J)}_{\bm i_J} - s^J_{\bm i_J} \right| \qquad \text{and}  \qquad B(p) := \max_{K \in \Kc} \left| \sum_{\bm i \in [n]^d} p_{\bm i} h^K_{\bm i} - \alpha_K \right|.
  $$
 Since $(ii)$ holds, $q' = \lim_{m \to \infty} q^{[m]} = q$, the $I$-projection of $r$ on $\Ec$ which satisfies $\supp(q) \subset \supp(r)$ by definition (see Section~\ref{sec:Iproj}). Since $q \in \Ec$ and the functions $A$ and $B$ are continuous, $\lim_{m \to \infty} \Err_{\Jc'}^{[m]} = \lim_{m \to \infty} A(q^{[m]}) = A(q) = 0$ and $\lim_{m \to \infty} \Err_{\Kc}^{[m]}  = \lim_{m \to \infty} B(q^{[m]}) = B(q) = 0$.\\
$(iii) \Rightarrow (i)$: Since $q' = \lim_{m \to \infty} q^{[m]}$ exists, $\lim_{m \to \infty} \Err_{\Jc'}^{[m]} = 0$ implies that $A(q') = 0$ and $\lim_{m \to \infty} \Err_{\Kc}^{[m]}  = 0$ implies that $B(q') = 0$. Hence, $q' \in \Ec$. This, combined with the fact that $\supp(q') \subset \supp(r)$, implies that Condition~\ref{cond:included} holds.
\end{proof}
    
\section*{Acknowledgments}

The authors would like to thank an Associate Editor and two anonymous Referees for their constructive comments on an earlier version of this manuscript.

\bibliographystyle{myjmva}
\bibliography{biblio}

\newpage

\includepdf[page={-},offset=0mm 0mm]{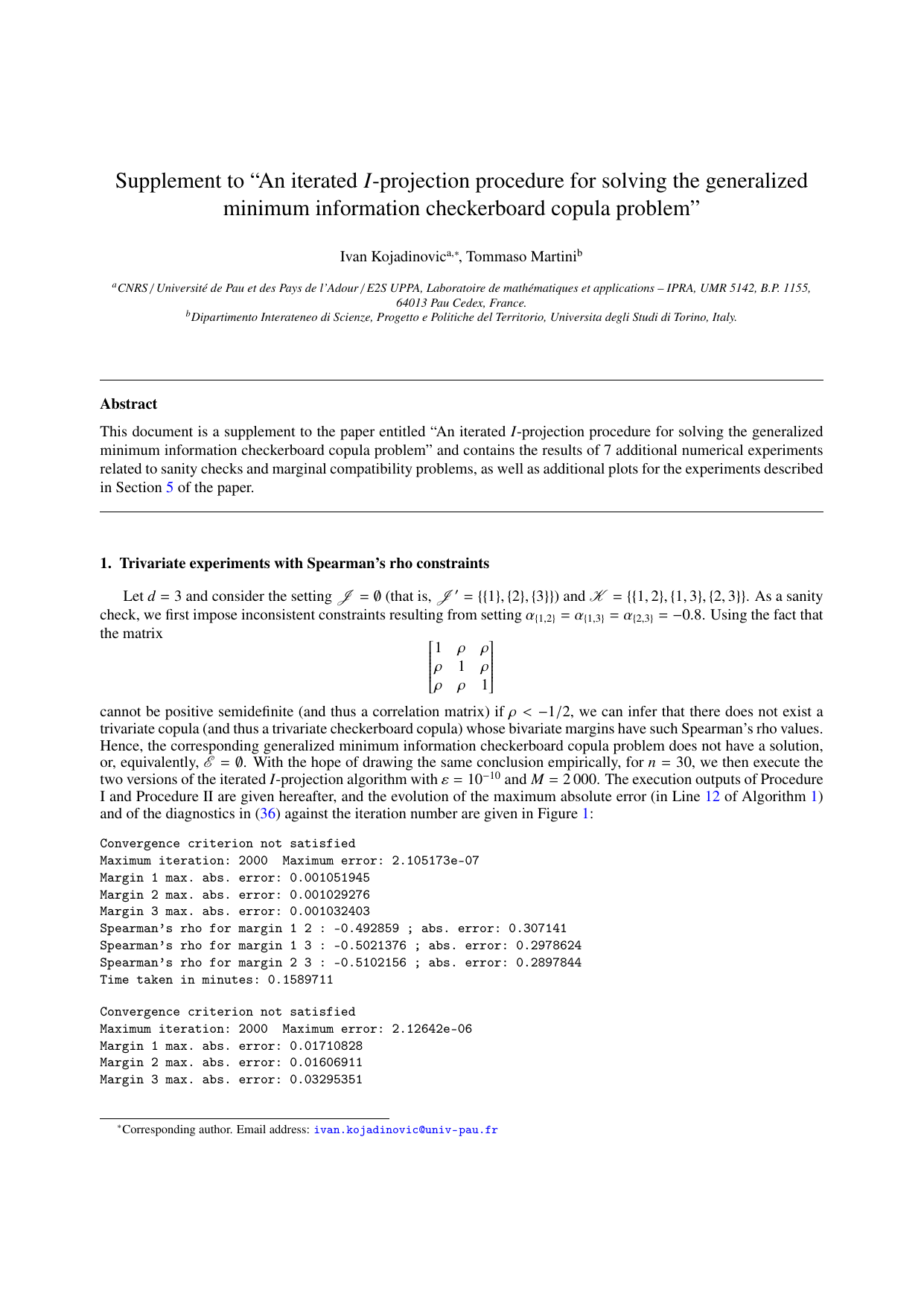}

\end{document}